\documentclass[11pt,letterpaper]{amsart}
\usepackage{amsfonts}
\usepackage{amssymb, amsmath, amsthm}
\usepackage[foot]{amsaddr}
\usepackage{enumitem}
\usepackage{tikz}
\usepackage{hyperref}

\theoremstyle{plain}
\newtheorem{thm}{Theorem}[section] 

\newtheorem{lem}[thm]{Lemma}
\newtheorem{prop}[thm]{Proposition}
\newtheorem{cor}[thm]{Corollary}
\newtheorem{rem}[thm]{Remark}

\textwidth150mm
\textheight212mm
\hoffset-2cm  
\voffset-7mm

\providecommand{\N}{\mathbb{Z}^{> 0}}
\providecommand{\R}{\mathbb{R}}
  
\providecommand{\Z}{\mathbb{Z}}

\providecommand{\cG}{\mathcal{G}}

\providecommand{\cJ}{\mathcal{J}}

\providecommand{\cNP}{\mathcal{N}\!{\mathcal{P}}}

\providecommand{\eps}{\varepsilon}

\providecommand{\weak}{\rightharpoonup}

\DeclareMathOperator{\spann}{span}
\DeclareMathOperator{\codim}{codim}
\DeclareMathOperator{\essinf}{essinf}

\newcommand{\intd}{\,\mathrm{d}}

\newcommand{\intervalcc}[1]{\mathopen[#1\mathclose]}
\newcommand{\intervalco}[1]{\mathopen[#1\mathclose)}
\newcommand{\intervaloc}[1]{\mathopen(#1\mathclose]}
\newcommand{\intervaloo}[1]{\mathopen(#1\mathclose)}

\title[Normalized solutions of Nehari-Pankov type]{Normalized solutions of Nehari-Pankov type
to mass-supercritical indefinite variational problems}
\date{\today}

\author{Damien Galant\textsuperscript{1,2}}
\author{Tobias Weth\textsuperscript{3}}
\address{\textsuperscript{1}Department of Mathematics, Brown University,
Providence, RI 02912. USA}
\address{\textsuperscript{2}Département de Mathématique,
UMONS, Université de Mons, 7000 Mons. Belgium}
\address{\textsuperscript{3}Institut f\"ur Mathematik, Goethe-Universit\"at Frankfurt, D-60629 Frankfurt am
Main. Germany}
\email{\textsuperscript{1}\href{mailto:damien\_galant@brown.edu}{damien\_galant@brown.edu}}
\email{\textsuperscript{3}\href{mailto:weth@math.uni-frankfurt.de}{weth@math.uni-frankfurt.de}}

\subjclass[2010]{Primary 35J91, secondary 35J35, 35Q55, 35R02, 58E30}

\keywords{Nehari-Pankov set, normalized solutions, indefinite variational problems, spectral gaps, metric graphs}

\begin{document}
\begin{abstract}
We consider abstract nonlinear equations of the form
    $A u = \lambda u + I'(u)$, where $A$ is a self-adjoint
    operator with compact resolvent on a Hilbert space $H$,
    $\lambda \in \R$ is a parameter,
    and $u \mapsto I'(u)$
    is a superlinear term of variational nature. In this abstract setting, we develop a new approach to detect prescribed norm solutions in $H$ which does not rely on any mass-subcriticality assumptions. We then consider various applications of this approach.
    First, we obtain, under general assumptions including the full mass-supercritical parameter regime, the existence of (infinitely many) solutions to a class of nonlinear Schrödinger equations on a compact graph $\cG$ with prescribed arbitrarily large mass, thereby improving previous results which only cover small masses. Moreover, we derive a similar result for a biharmonic Schrödinger equation in the $2$-torus. For a larger class of second order and higher order equations in a bounded domain with Dirichlet boundary conditions, we also show the existence of multiple solutions with prescribed small mass.

    The solutions we obtain are detected as ground states of Nehari-Pankov type for the associated $\lambda$-dependent action functional, where $\lambda$ varies in a spectral gap between sufficiently large eigenvalues of $A$. The key new observation in this abstract framework
    is the fact that the $H$-norms of these $\lambda$-dependent solution families form connected sets even though the solution families themselves may be disconnected. To estimate the size of these connected sets in specific settings, we use Weyl type estimates for the length of spectral gaps, variational characterizations of eigenvalues, bounds for associated eigenfunctions and a bound from analytic number theory.
\end{abstract}

\maketitle

\allowdisplaybreaks

\section{Introduction}

Let $H$ be an infinite dimensional separable real Hilbert space with scalar product $\langle \cdot, \cdot \rangle_H$ and induced norm $\|\cdot\|_H$, and let $A: D(A) \subset H  \to H$ be a nonnegative self-adjoint operator with compact resolvent. We let $E:= D(A^{\frac{1}{2}}) \subset H$, where $A^{\frac{1}{2}}$ is defined in spectral theoretic sense. Then $E$ is a separable
Hilbert space with scalar product
$$
(u,v) \mapsto \langle u,v \rangle_E := \langle u, v \rangle_H +
\langle A^{\frac{1}{2}} u,A^{\frac{1}{2}} v \rangle_H
$$ 
and induced norm $\|\cdot\|_E$. We consider the nonlinear equation
\begin{equation}
    \label{eq:nonlinear-wave-intro}
    A u = \lambda u + I'(u), \qquad u \in E
\end{equation}
with a given (nonlinear) functional $I \in C^1(E,\R)$ satisfying $I(0)=I'(0)=0$. This equation is understood in weak sense as an equation in $E'$, the topological dual of $E$ and it can be written as the Euler-Lagrange equation $\cJ_\lambda'(u)=0$ for the associated $\lambda$-dependent {\em action functional}
$$
\cJ_\lambda \in C^1(E,\R), \qquad \cJ_\lambda(u) = \frac{1}{2}q_\lambda(u)- I(u).
$$
Here and in the following, the symmetric bilinear form $q_\lambda: E \times E \to \R$ is defined by 
 $$
 q_\lambda(u,v):= \langle A^{\frac{1}{2}} u,A^{\frac{1}{2}} v \rangle_H - \lambda \langle u, v \rangle_H
 $$
and we write $q_\lambda(u):= q_\lambda(u,u)$ for $u \in E$.
So, weak solutions of (\ref{eq:nonlinear-wave-intro}) are the critical points of $\cJ_\lambda$. In this paper, we are mainly concerned with the existence of solutions of (\ref{eq:nonlinear-wave-intro}) satisfying the normalization condition
\begin{equation}
  \label{eq:normalization-condition-abstract}
\|u\|_H = \mu   
\end{equation}
for some given $\mu>0$. This constrained problem also has a variational formulation: its solutions are precisely the critical points of the restriction of the {\em energy functional} $\cJ_0$ to the sphere $S_\mu:= \{u \in H\::\: \|u\|_H= \mu\}$, and the frequency parameter $\lambda$ appears as an a priori unknown Lagrange multiplier.

In the present work, however, we do not adopt this point of view. Instead, we develop a new approach related to the action functional which allows to control $\|\cdot\|_H$ and yields additional information on $\lambda$ at the same time. This approach is inspired by the recent paper \cite{DDGS} of De Coster, Dovetta, Serra and the first author, in which ground state solutions of (\ref{eq:nonlinear-wave-intro}) were considered under the assumption that $\lambda$ is below the spectrum $\sigma(A)$ of $A$. One aim of the present work is to remove this restriction, and this leads us to the study of critical points of Nehari-Pankov type of $\cJ_\lambda$ within an indefinite variational setting. These solutions
have received extensive attention in recent years, but no attempt has been made yet to derive normalization conditions for them. 

\medbreak
To show the strength of our new approach, we first discuss an application to nonlinear (time-independent) Schrödinger equations on compact metric graphs, for which we obtain a remarkably general new existence and multiplicity result for solutions with prescribed mass. We recall that a compact metric graph $\cG$ is made of a finite number
of edges, identified to compact intervals of $\R$,
joined together at vertices
(see e.g.\,\cite[Chapter 1]{BerKuc} for an introduction
to analysis on metric graphs). On every edge of the graph, we consider
the differential equation given by
\begin{equation}
\label{graph-ode}
  -u'' = \lambda u + f(u)
  \end{equation}
with a continuous nonlinearity $f$. Moreover, we allow for general vertex conditions
ensuring self-adjointness of the associated Laplacian on $\cG$. Their classification is a classic
result in the theory of quantum graphs,
see e.g.\,\cite[Theorem 1.4.4]{BerKuc}.
Among the possible vertex conditions ensuring self-adjointness,
the most studied
are continuity/Kirchhoff vertex conditions,
in which case one has
\begin{equation*}
    D(A_{\mathrm{Kir}})
    := \biggl\{ u \in C(\cG, \R) \Bigm| 
    u \text{ is $H^2$ edge by edge
    and is s.t.\,for all vertices $v$,}
    \sum_{e \succ v} u_e'(v) = 0 \biggr\}.
\end{equation*}
Here $u_e'(v)$ is the derivative of $u$
along edge $e$ outgoing from the vertex $v$.
We remark that self-adjointness is preserved if the Kirchhoff condition
is replaced by the Dirichlet one at some vertices,
or by the $\delta$-condition
\begin{equation*}
    \sum_{e \succ v} u_e'(v) = \alpha_v u(v)
\end{equation*}
for some real parameters $\alpha_v$.

These conditions have also received a lot of attention
in the recent literature about NLS equations on graphs.
We refer to \cite{KNP}
for an overview of this rapidly growing field.

Our main result in this graph setting is the following.
\begin{thm}
    \label{graph_mult}
    Let $\cG$ be a compact metric graph
    and $(D(A), A)$ be a self-adjoint realisation
    of the Laplacian on $\cG$
    (acting as the second derivative on every edge).
    Let $f \in \mathcal{C}(\R, \R)$ satisfy the following two assumptions:
    \begin{itemize}
    \item[(f1)]\label{def:f1} The function $s \mapsto \frac{f(s)}{|s|}$ is strictly increasing on $\R \setminus \{0\}$ with
      $$
      \lim_{s \to 0}\frac{f(s)}{|s|}= 0\qquad \text{and}\qquad \lim_{s \to \pm \infty}\frac{f(s)}{|s|} =  \pm \infty.
      $$
    \item[(f2)]\label{def:f2}Defining
    $F(s) := \int_0^s f(t) \intd t$,
    there exists $s_0 > 0$ and $\kappa_0>1$ with 
    \begin{equation}
        \label{hyp_homogeneity}
        F\Bigl(\frac{s}{\kappa_0}\Bigr)
        \le F(-s) \le F(\kappa_0 s)
        \qquad \text{for all $s \ge s_0$.}
      \end{equation}
    \end{itemize}
    Then, for any $\mu > 0$, the nonlinear Schrödinger equation
    $-u'' = \lambda u + f(u)$
    coupled with vertex conditions from $D(A)$
    has infinitely many normalized solutions of mass $\mu$,
    associated to a sequence of parameters
    $\lambda$ converging to $+\infty$.
    Moreover, if $A$ admits one eigenfunction
    which is positive almost everywhere in $\cG$,
    then these solutions are sign-changing for $\lambda$ large.
\end{thm}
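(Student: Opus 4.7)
The plan is to cast this as an application of the abstract theorem developed earlier in the paper, with $H = L^2(\cG)$, $A$ the given self-adjoint Laplacian on $\cG$, $E = D(A^{1/2})$ (a closed subspace of $H^1(\cG)$), and $I(u) = \int_\cG F(u)$. Since $\cG$ is one-dimensional, $E$ embeds continuously into $L^\infty(\cG)$ and compactly into every $L^p(\cG)$; condition (f1) then guarantees that $I \in C^1(E,\R)$ with $I(0) = I'(0) = 0$ and that the Nehari-Pankov structure of $\cJ_\lambda$ holds (monotonicity of $s \mapsto f(s)/|s|$ gives strict convexity in each ray through the origin, while the growth at $0$ and $\infty$ make $\cJ_\lambda$ both small on small spheres and unbounded along appropriate directions). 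Condition (f2) is used exactly to replace the usual oddness of $f$ in the abstract multiplicity framework.

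Next, I would apply the abstract existence theorem to each spectral gap $(\lambda_{k-1}, \lambda_k)$ between successive distinct eigenvalues of $A$. This gives, for every $\lambda$ in the gap, a Nehari-Pankov ground state $u_\lambda$ of $\cJ_\lambda$. The abstract connectedness statement then implies that the set of masses
\[
J_k := \bigl\{\|u_\lambda\|_H \;:\; \lambda \in (\lambda_{k-1},\lambda_k)\bigr\}
\]
is a connected subset of $(0,\infty)$, i.e.\ an interval. Since we only need one solution of prescribed mass $\mu$ per gap, and infinitely many gaps are available (as $\lambda_n \to \infty$), any fixed $\mu$ will be attained infinitely often once we know $\mu \in J_k$ for all large $k$.

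The main obstacle is precisely this last quantitative step: controlling both endpoints of $J_k$ uniformly in $k$. The strategy is to analyze $u_\lambda$ as $\lambda$ approaches either end of the gap. Testing the critical point relation against the eigenfunctions $\varphi_{k-1},\varphi_k$ and using (f1)--(f2) together with variational characterizations of the eigenvalues and $L^p$-bounds for eigenfunctions on $\cG$, one obtains bounds for $\|u_\lambda\|_H$ in terms of the gap length $\lambda_k - \lambda_{k-1}$ and the $L^p$-norms of $\varphi_{k-1}, \varphi_k$. The Weyl asymptotics for a one-dimensional compact graph, namely $\lambda_k \asymp k^2$ with $\lambda_k - \lambda_{k-1}$ growing along a subsequence, combined with uniform one-dimensional eigenfunction estimates, should yield $\inf J_k \to 0$ and $\sup J_k \to \infty$, so that $J_k \supset (0,\mu]$ for all $k$ large enough.

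Finally, the sign-changing conclusion follows from a direct test. Suppose $A$ admits an eigenfunction $\varphi > 0$ a.e.\ with $A\varphi = \mu_* \varphi$, and take $\lambda > \mu_*$ (which occurs whenever the gap index is large). Testing the Euler-Lagrange equation $(A-\lambda)u_\lambda = f(u_\lambda)$ with $\varphi$ gives
\[
(\mu_* - \lambda)\langle u_\lambda,\varphi\rangle_H = \int_\cG f(u_\lambda)\,\varphi.
\]
By (f1), $f(s)$ has the sign of $s$, so if $u_\lambda$ had constant sign, the right-hand side would have the same sign as $\langle u_\lambda,\varphi\rangle_H$, contradicting $\mu_* - \lambda < 0$. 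Hence $u_\lambda$ must change sign for all large $\lambda$, concluding the proof.
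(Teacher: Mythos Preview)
The overall architecture is right: cast into the abstract framework, apply the Nehari--Pankov existence theorem in each spectral gap, use the connectedness theorem for masses, invoke Weyl asymptotics for large gaps, and conclude sign-changing by testing against a positive eigenfunction. The sign-changing argument matches the paper exactly. But the heart of the proof --- the quantitative step --- has a genuine gap, and you misplace the role of (f2).

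First, (f2) is \emph{not} used in the abstract Nehari--Pankov framework. Assumptions (I1)--(I4) follow from (f1) alone (via the Szulkin--Weth inequality in Remark~\ref{I4_practice}); no oddness substitute is needed there. In the paper, (f2) enters only in the ODE phase-plane analysis on a single edge, where it controls the ratio between the maximum $M$ and the minimum $-m_{\lambda,M}$ of the periodic solution (Lemma~\ref{comparison_m_M}), so that large $\|u\|_{L^\infty}$ forces large $\|u\|_{L^2}$.

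Second, your proposed mechanism for the mass bounds --- testing the equation against $\varphi_{k-1},\varphi_k$ and using $L^p$-bounds for eigenfunctions --- is neither what the paper does nor clearly workable here. The paper obtains large $L^2$-mass in two stages: (a) view $u_\lambda$ as an eigenfunction of $A-V_u$ with $V_u(x)=f(u(x))/u(x)$, and use the minimax characterization of eigenvalues to get $\lambda_{n+1}-\lambda \le \|V_u\|_{L^\infty} \le \tilde g(\|u\|_{L^\infty})$, so that a large gap forces large $\|u\|_{L^\infty}$ (Proposition~\ref{large_L_infty}); (b) an ODE argument on one edge shows that solutions with large $\|u\|_{L^\infty}$ have short period and hence $\|u\|_{L^2} \ge \gamma \|u\|_{L^\infty}$ (Lemmas~\ref{bound_L2_norm_ODE}--\ref{Linfty_to_L2}). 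Eigenfunction $L^p$-bounds are used in the torus section of the paper, not here; the graph proof relies instead on the one-dimensional ODE structure, and this is precisely where (f2) is consumed. Without step~(b), there is no route from spectral-gap length to $L^2$-mass.

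A minor point: Theorem~\ref{M-n-char} already gives $(0,g_n)\subset M_n$, so you do not need to argue that $\inf J_k \to 0$; only $\sup J_k \to \infty$ along a subsequence of gaps is required.
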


\begin{rem}{\rm The assumption (f1) is a natural superlinearity condition.  The condition (f2) can be seen as a mild form of approximate homogeneity. Since, by (f1), the function $F$ is increasing on $\intervalco{0, +\infty}$ and decreasing on $(-\infty,0]$,
    one can always replace $\kappa_0$ by a bigger positive constant in \eqref{hyp_homogeneity}.
  }
\end{rem}

Under assumptions of Theorem~\ref{graph_mult}, even the mere existence of one (non-constant) normalized solution is new in the case of an arbitrary given mass $\mu>0$. In comparison with the existing literature, it is remarkable that Theorem~\ref{graph_mult} neither requires oddness of the nonlinearity $f$, nor does it require any restriction on the growth of $f$ relative to the associated mass-critical exponent.

To discuss this in more detail, we note that the special case where $f(u) = |u|^{p-2}u$ for some $p > 2$ has received particular
interest in the literature. When $2 < p < 6$ and in the case of continuity/Kirchhoff
conditions, an infinite multiplicity result
(without information on $\lambda$ or on the sign
of solutions) was obtained
by Dovetta using Lusternik-Schnirelmann theory
on the energy functional constrained
to the sphere of constant mass \cite{Dov18}.

In the mass-supercritical case $p>6$, existence of non-constant normalized
solutions is more delicate since the energy functional
is unbounded from below on the constraint.
A first result on the existence of nonconstant solutions
for small masses was obtained by Chang, Jeanjean and Soave
\cite{CJS}. As mentioned above, the problem of existence in the case of arbitrary masses remained open on compact graphs for $p>6$, while it could be solved in a different setting of noncompact graphs and ``localized
nonlinearities'' in \cite{BCJS_nonlin}. Moreover, an infinite multiplicity result was obtained
in this case by Carrillo, Jeanjean, Troestler and the first
author in \cite{CGJT}, exploiting a new abstract result
about the existence of bounded Palais-Smale sequences with
Morse index type information recently developed
by Borthwick, Chang, Jeanjean and Soave \cite{BCJS_TAMS}.
Those results and the underlying methods depend in a crucial way on the presence of half-lines in the graph without a nonlinearity acting on them. In this way, they exploit a suitable tradeoff between compactness and non-compactness, and thus they do not carry over to the case of compact graphs.

Very recently, in \cite[Theorem 1.3]{JeSo}, Jeanjean and Song proved a multiplicity result for compact graphs $\cG$
and small masses with continuity-Kirchhoff conditions. More precisely, by combining flow invariance techniques with delicate estimates, they show that for every $n \in \N$ there exists $\mu_n>0$ with the property that, for every $\mu \in (0,\mu_n)$, the 
NLS equation $-u'' = \lambda u + |u|^{p-2}u$ admits at least $n$ pairs $\pm u$ of sign changing solutions with fixed mass $\mu$. 
Using our new ``frequency to masses'' tool (Theorem~\ref{M-n-char} below), we are able to improve
this multiplicity result to the existence of infinitely many solutions, for any mass. Moreover, we can extend this multiplicity result beyond the class of odd nonlinearities to a setting where, in particular, Lusternik-Schnirelman theory does not apply. Let us also point out that
we can deal with the most general self-adjoint
vertex conditions, also generalizing the previous results.
In the case of continuity-Kirchhoff conditions,
the first eigenfunction of $A$ is positive in $\cG$
(see e.g.\,\cite[Theorem 4.12]{Kur}),
so that we also obtain the existence
of sign-changing solutions.
Finally, as already pointed out, we do not need to make
any case distinction depending on whether $p < 6$ or $6 < p$.

\medbreak
For our next result, let $\Omega  \subset \R^N$ be
a smooth bounded domain and $m \ge 1$ be an integer. Moreover, we let
    \begin{equation*}
        2^*_{N, m} := \begin{cases}
            \frac{2N}{N-2m}  &\text{if $N > 2m$},\\
            +\infty         &\text{otherwise}
        \end{cases}
    \end{equation*}
    denote the usual $m$-th order Sobolev critical exponent, and we consider the polyharmonic equation
    \begin{equation}
      \label{polyharmonic}
                (-\Delta)^m u = \lambda u + f(x, u)\qquad \text{in } \Omega
\end{equation}
with a nonlinearity $f \in C(\Omega \times \R,\R)$ satisfying
\begin{itemize}
\item[$(FP1)$] $|f(x,t)| \le a(1+|t|^{p-1})$ for some $a>0$ and $p\in(2,2^*_{N,m})$.
\item[$(FP2)$]  $f(x,t)=o(t)$ uniformly in $x$ as $|t| \to 0$.
\item[$(FP3)$] $F(x,t)/t^2 \to\infty$ uniformly in $x \in \Omega$ as $|t|\to\infty$, where $F(x,t) = \int_0^t f(x,s) \intd s$.
\item[$(FP4)$] $t \mapsto f(x,t)/|t|$
is strictly increasing on $\R \setminus \{0\}$
for all $x \in \Omega$.
\end{itemize}
We consider (\ref{polyharmonic}) both with homogeneous Dirichlet boundary conditions
\begin{equation}
  \label{eq:Dirichlet-boundary}
u = \partial_\nu u = \dots = \partial_\nu^{m-1} u = 0\qquad \text{on } \partial \Omega
  \end{equation}
and homogeneous Navier boundary conditions
\begin{equation}
  \label{eq:Navier-boundary}
            u = \Delta u = \cdots = \Delta^{m-1} u = 0
\qquad \text{on } \partial \Omega.
\end{equation}
Our main result in this setting is a straightforward consequence of an abstract multiplicity
result given in Theorem~\ref{abstract_mult} below, and it reads as follows.

\begin{thm}
  \label{theorem-polyharmonic}
Suppose that (FP1)--(FP4) are satisfied. Then, for any integer $k \ge 1$,
    there exists $\mu_k > 0$ such that for all
    $\mu \in \intervaloo{0, \mu_k}$,
    problems (\ref{polyharmonic})--(\ref{eq:Dirichlet-boundary}) and (\ref{polyharmonic})--(\ref{eq:Navier-boundary})
    both admit $k$ distinct  
    solutions $(u,\lambda)$
    with mass $\|u\|_{L^2(\Omega)}^2 = \mu$. 
    Moreover, if $m=1$, these solutions are sign-changing.
  \end{thm}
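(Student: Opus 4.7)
The proof is, as the authors indicate, reduction to the abstract multiplicity theorem~\ref{abstract_mult}, so my plan is to verify the abstract hypotheses in the polyharmonic setting and then read off the conclusion.

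\medbreak
\textbf{Step 1: Functional analytic setup.} I would take $H := L^2(\Omega)$ and let $A$ be the $L^2$-realization of $(-\Delta)^m$ with the chosen boundary conditions. In the Dirichlet case one has the form domain $E = H^m_0(\Omega)$, and $q_0(u) = \|(-\Delta)^{m/2} u\|_{L^2}^2$ (equivalent to the $H^m_0$-norm by Poincaré). In the Navier case one iterates the second-order Dirichlet Laplacian $m$ times, so that $A$ is again a nonnegative self-adjoint operator with compact resolvent, and $E$ is the subspace of $H^m(\Omega) \cap H^1_0(\Omega)$ characterized by $\Delta^j u\big|_{\partial\Omega}=0$ for $0 \le j < m/2$; Rellich--Kondrachov gives the compact embedding $E \hookrightarrow L^p(\Omega)$ for all $p < 2^*_{N,m}$, which is what will matter below.

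\medbreak
\textbf{Step 2: Verification of the abstract hypotheses.} I would define
\[
I(u) := \int_\Omega F(x,u)\intd x.
\]
From (FP1), (FP2) and the compact embedding $E\hookrightarrow L^p$, standard arguments give $I \in C^1(E,\R)$ with $I(0)=I'(0)=0$, and $I'$ is compact as a map $E \to E'$. The hypothesis (FP2) together with (FP1) yields, for every $\eps > 0$, an estimate $|I(u)| \le \eps \|u\|_{L^2}^2 + C_\eps \|u\|_{L^p}^p$, which is the small-mass control needed for the abstract multiplicity framework. The superlinearity hypothesis (FP3) gives the standard property $I(u)/\|u\|_{L^2}^2 \to +\infty$ along any bounded-in-$L^2$ but unbounded-in-$L^p$ direction (up to refinement via a Lions-type lemma). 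Most importantly, (FP4) produces the pointwise strict monotonicity of $t \mapsto f(x,t)/|t|$ that underlies the Nehari--Pankov construction used to build the abstract ground-state multiplicity: it implies $t\mapsto \frac12 t f(x,t)-F(x,t)$ is strictly increasing in $|t|$, which is the standard ingredient that makes the Nehari--Pankov fiber map have a unique maximum.

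\medbreak
\textbf{Step 3: Conclusion via the abstract theorem.} With these ingredients, Theorem~\ref{abstract_mult} applies for each spectral gap of $A$ above sufficiently high eigenvalues, producing a connected set $M_k$ of attainable masses of Nehari--Pankov ground states associated to $\lambda$ in the $k$-th such gap. The key point from the abstract setup is that $M_k$ contains masses arbitrarily close to $0$ (the energy of Nehari--Pankov ground states at $\lambda$ near the lower end of the gap, combined with the $L^2$-norm control, tends to $0$); by connectedness, $M_k \supset (0, \mu_k)$ for some $\mu_k > 0$. Varying $k$ through distinct gaps and using that the corresponding Lagrange multipliers $\lambda$ lie in disjoint gaps, the $k$ solutions $(u_1,\lambda_1),\dots,(u_k,\lambda_k)$ obtained at any fixed $\mu \in (0,\min_{j \le k}\mu_j)$ are automatically distinct. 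Relabeling $\mu_k$ as this minimum yields the stated result.

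\medbreak
\textbf{Step 4: Sign-changing for $m = 1$.} When $m = 1$, both boundary conditions coincide with the Dirichlet Laplacian, whose first eigenfunction is strictly positive in $\Omega$. As in the graph theorem, the Nehari--Pankov construction forces ground states associated to $\lambda$ above the first eigenvalue to have a nontrivial component in the negative spectral subspace of $q_\lambda$, which, combined with positivity of the principal eigenfunction, yields sign-change. The main obstacle in executing this plan is not any single step but rather bookkeeping: one must check carefully that the polyharmonic setting meets all the technical smallness and compactness hypotheses of Theorem~\ref{abstract_mult}, especially that the (FP4) monotonicity translates to the abstract Nehari--Pankov hypothesis and that the subcritical growth (FP1) yields the compactness (Palais--Smale) property uniformly in $\lambda$ across a full spectral gap.
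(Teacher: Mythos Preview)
Your overall strategy---verify (I1)--(I4) for $I(u)=\int_\Omega F(x,u)\intd x$ and invoke Theorem~\ref{abstract_mult}---is exactly the paper's route, but several steps have genuine gaps.

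\textbf{Step 2.} You never actually verify (I1)--(I4). You correctly observe that (FP4) gives strict monotonicity of $t\mapsto f(x,t)/|t|$, but you then invoke the consequence ``$t\mapsto \frac12 t f(x,t)-F(x,t)$ strictly increasing,'' which is the classical Nehari ingredient and is \emph{not} the two-variable inequality (I4). What is needed is the pointwise inequality~\eqref{eq:szulkin-weth-lemma} of Remark~\ref{I4_practice} (a nontrivial lemma from \cite{SzuWet}), integrated over $\Omega$. You also omit (I2), i.e.\ $F(x,t)>0$ for $t\neq 0$, which does follow from (FP2)+(FP4) but must be noted. Finally, the references to a ``Lions-type lemma'' and to uniform Palais--Smale are red herrings: the abstract framework uses neither---compactness enters only through the strong continuity (I1) and Lemma~\ref{general-compactness-cond}.

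\textbf{Step 3.} You re-derive what is already packaged in Theorem~\ref{abstract_mult}, and in doing so you reverse the limit: small masses arise as $\lambda\to\lambda_{n+1}^-$, the \emph{upper} end of the gap (Lemma~\ref{flat-initial-curve}), not the lower end. This is harmless once you simply quote Theorem~\ref{abstract_mult} directly.

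\textbf{Step 4.} This argument is incorrect. Membership in $\cNP_\lambda$ only says $u\in E\setminus E_n$; it does not force a nontrivial $E_n$-component, and even such a component would not by itself imply sign change. The correct argument (cf.\ Proposition~\ref{no_pos_sol}) tests a hypothetical nontrivial nonnegative solution against the positive first Dirichlet eigenfunction $\xi_1$: one obtains
\[
(\lambda_1-\lambda)\int_\Omega u\,\xi_1\intd x=\int_\Omega f(x,u)\,\xi_1\intd x,
\]
with the left side nonpositive (since $\lambda>\lambda_1$) and the right side strictly positive (since (FP2)+(FP4) give $f(x,t)>0$ for $t>0$), a contradiction.
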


We emphasize that this result is new even in the classical second order case $m=1$, and it does not require oddness of the nonlinearity $f$ in $t$. In the special case of homogeneous nonlinearities of the form $f(x,t)= |t|^{p-2}t$, Theorem~\ref{theorem-polyharmonic} is the analogue of the recent multiplicity result of Jeanjean and Song in \cite[Theorem 1.3]{JeSo} for the case of compact metric graphs. Our method is completely different from the approach in \cite{JeSo}, which, according to remarks in \cite[Introduction]{JeSo}, also yields multiplicity results for NLS-equations on bounded domains in $\R^N$.
  
We also point out that the conclusion of Theorem~\ref{theorem-polyharmonic} is weaker than the one of Theorem~\ref{graph_mult}, since only small masses are considered. We conjecture that at least in the case $N=2$, $m>1$ the multiplicity result of Theorem~\ref{theorem-polyharmonic} extends to arbitrary positive masses $\mu$. To support this conjecture, we present a further result for the closely related problem of finding normalized solutions of biharmonic semilinear equations in the $2$-torus. 

\begin{thm}
    \label{torus_mult}
    Let $\Omega := S^1 \times S^1$ be the flat 2-torus,
    let $p>2$, and let $r \in L^\infty(\Omega)$ satisfy
    $\underset{x \in \Omega}{\essinf}\,r(x)>0$.
    Then, for any $\mu>0$, the nonlinear equation
    \begin{equation}
      \label{torus-eq}
        \Delta^2 u
        = \lambda u + r(x)|u|^{p-2}u
        \qquad \text{in $\Omega$}
    \end{equation}
    admits infinitely many normalized sign-changing solutions with mass $\|u\|_{L^2(\Omega)}^2 = \mu$ associated to a sequence of parameters $\lambda$ converging to $+\infty$.
  \end{thm}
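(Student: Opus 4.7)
I would cast \eqref{torus-eq} into the abstract framework of the paper by taking $H = L^2(\Omega)$, $A = \Delta^2$ (self-adjoint on $L^2(\Omega)$ with domain $H^4(\Omega)$, with compact resolvent on the flat torus), $E = D(A^{1/2}) = H^2(\Omega)$, and
\[
I(u) = \int_\Omega \frac{r(x)}{p}|u|^p \intd x.
\]
Since $r$ is essentially bounded below and above by positive constants and $H^2(\T^2) \hookrightarrow L^\infty(\T^2)$ in dimension $2$, the functional $I$ is of class $C^1$ on $E$ for every $p>2$ with no restriction coming from any critical exponent. The pure $p$-homogeneity of the integrand trivially yields the analogues of (f1) and (f2) from Theorem~\ref{graph_mult}; in particular $I$ is strictly convex on rays through the origin.

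The plan is then to apply the abstract machinery (Theorem~\ref{abstract_mult} together with the frequency-to-mass tool~\ref{M-n-char}): for each spectral gap $(\mu_k,\mu_{k+1})$ of $A$ with $k$ large, one obtains a family of Nehari--Pankov ground states $u_\lambda$ of $\cJ_\lambda$, parametrized by $\lambda \in (\mu_k,\mu_{k+1})$, whose masses $\|u_\lambda\|_{L^2(\Omega)}^2$ fill out a connected subset $\mathcal{M}_k \subset (0,\infty)$. Infinite multiplicity for a prescribed $\mu>0$ then reduces to showing that $\{k : \mu \in \mathcal{M}_k\}$ is infinite.

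Establishing this coverage is the hard step and is where analytic number theory enters. The eigenvalues of $A$ on the flat torus are exactly the numbers $n^2$ where $n\ge 0$ is representable as $n = k_1^2 + k_2^2$ with $(k_1,k_2) \in \Z^2$, and the associated eigenfunctions are products of trigonometric functions that are uniformly bounded in $L^\infty$ independently of the eigenvalue. Two ingredients come into play: (i) a classical Richert-type bound on the maximal gap between consecutive sums of two squares, which is of size $O(n^{1/4})$ and hence much smaller than $n$; (ii) a Weyl-type asymptotic on the spectral counting function for $\Delta^2$ on $\T^2$. Combining (i)--(ii) with the uniform $L^\infty$ bounds on eigenfunctions and the variational characterization of $u_\lambda$, I expect to show that $\|u_\lambda\|_{L^2(\Omega)}^2 \to +\infty$ as $\lambda \uparrow \mu_{k+1}$ (by mass-supercriticality at resonance) while, along a subsequence of gaps controlled by (i), $\inf \mathcal{M}_k \to 0$. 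This forces every $\mu>0$ to lie in $\mathcal{M}_k$ for infinitely many $k$, producing the desired sequence of solutions with $\lambda_j \to +\infty$.

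Finally, the sign-changing assertion follows from the existence of an almost-everywhere positive eigenfunction of $A$ -- namely the constant function $1$, associated with the eigenvalue $0$ -- by the same argument as in the last sentence of Theorem~\ref{graph_mult}, implemented at the abstract level: for large $\lambda$ a Nehari--Pankov ground state of $\cJ_\lambda$ must carry a nontrivial component in the (now large) negative subspace of the quadratic form $q_\lambda$, which excludes constant sign.
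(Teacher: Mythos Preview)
Your setup and the reduction to showing that infinitely many of the mass intervals $M_k$ from Theorem~\ref{M-n-char} contain a given $\mu$ are correct, and the sign-changing argument via the constant eigenfunction is fine. However, the core step is not right.

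First, the behavior at the upper edge of a spectral gap is the opposite of what you claim: by Lemma~\ref{flat-initial-curve} one has $c_\lambda/(\lambda_{n+1}-\lambda)\to 0$ as $\lambda\uparrow\lambda_{n+1}$, and combined with Lemma~\ref{c-lambda-continuity-etc} this forces the masses of ground states to go to $0$ there, not to $+\infty$. In fact Theorem~\ref{M-n-char} already gives $(0,g_n)\subseteq M_n$, so $\inf M_n=0$ is automatic for every gap; the entire difficulty is to show that $\sup M_n=g_n\to\infty$ along a sequence of gaps, and you have not proposed a mechanism for this.

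Second, the number-theoretic input you cite works against you. A Richert-type bound says gaps between consecutive sums of two squares are \emph{small}, hence (after squaring) gives an \emph{upper} bound on the spectral gaps of $\Delta^2$; by itself this cannot produce large masses. What the paper actually uses is the multiplicity bound $r_2(j)\le C_\eps j^\eps$ (a consequence of the divisor bound), together with the uniform $L^\infty$ bound on eigenfunctions, to control the series $d(n,\lambda)=\sum_{k>n}(\lambda_k-\lambda)^{-1}$ in the abstract estimate of Proposition~\ref{eq-abstract-L2-est}. Choosing $\lambda=(\ell^2-1)^2$ just below the eigenvalue $\ell^4$ then yields $\sup\{\|u\|_{L^2}:u\in Q_\lambda,\ \lambda\in(\lambda_{n_\ell},\lambda_{n_\ell+1})\}\to\infty$ as $\ell\to\infty$, which is the missing lower bound on $g_{n_\ell}$. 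Your Weyl-law/gap ingredients do not give access to this quantity.
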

  
  \begin{rem}
 {\rm
(i) In the mass supercritical regime $p>6$ and for a given positive mass $\mu>0$, even the mere existence
of one normalized solution of (\ref{torus-eq}) is new. If the weight function $r$ is constant,
then the symmetries of the torus can be used
to produce multiple solutions with large mass
via a reflection procedure (see \cite[Theorem 1.13 and Remark 1.1]{pierotti-verzini} for related results in bounded domains). This is not possible when
$r$ does not have any symmetries.\\
(ii) Our proof of Theorem~\ref{torus_mult} relies on the explicit knowledge of eigenvalues and eigenfunctions of the Laplacian on the flat torus $S^1 \times S^1$, and it directly extends to the equation
\begin{equation}
  \label{eq:sqare-probl}
        \Delta^2 u
        = \lambda u + r(x)|u|^{p-2}u
        \qquad \text{in $\Omega= (0,\pi) \times (0,\pi)$}
\end{equation}
        subject to Navier boundary conditions
\begin{equation}
  \label{eq:sqare-probl-bc}
        u = \Delta u = 0 \qquad \text{on $\partial \Omega$.}        
\end{equation}
Here, again we assume that $p>2$ and $r \in L^\infty(\Omega)$ satisfies
    $\underset{x \in \Omega}{\essinf}\,r(x)>0$.
    Hence, for any $\mu>0$, problem (\ref{eq:sqare-probl}), (\ref{eq:sqare-probl-bc}) also admits infinitely many normalized sign-changing solutions with mass $\|u\|_{L^2(\Omega)}^2 = \mu$ associated to a sequence of parameters $\lambda$ converging to $+\infty$.}
\end{rem}

\medbreak
In the following, we present our abstract approach and explain how the results above follow from it. We continue to use the notations $H,E, A, I,q_\lambda$ and $\cJ_\lambda$ introduced in the beginning of this section.
We assume the functional $I$ satisfies
the following abstract properties.
\begin{itemize}
\item[(I1)] $I$ is strongly continuous, i.e. $u_n \weak u$ in $E$ implies $I(u_n) \to I(u)$.
\item[(I2)]  $I(0)=0$, and $I(u)>0$ for $u \in E \setminus \{0\}$. 
\item[(I3)] For every $u \in E \setminus \{0\}$, the function
       $t \mapsto \frac{I(tu)}{t^2}$ is nondecreasing on $(0,\infty)$ with
       \begin{equation}
         \label{eq:I-limits}
        \lim_{t \to 0^+} \frac{I(tu)}{t^2} = 0 \qquad \text{and}\qquad   \lim_{t \to +\infty} \frac{I(tu)}{t^2} = +\infty. 
       \end{equation}                  
\item[(I4)] For $u \in E \setminus \{0\}$ and $v \in E$ we have 
   \begin{equation}\label{eq:criterionB2I}
     I(u)-I((1+s)u+v)+I'(u)[s(s/2+1)u+(1+s)v]\leq 0\quad \text{for all $v\in E$, $s \ge -1$,}
   \end{equation}
   with strict inequality if $v=0$ and $s \ne 0$. 
     \end{itemize}

\begin{rem}{\rm 
    \label{I4_practice}
    With regard to (I4), we note that
    a function $F \in C^1(\R)$
    such that $\frac{F'(t)}{|t|}$ is strictly increasing
    on $\R \setminus \{0\}$ with
    $\lim \limits_{t \to 0} \frac{F'(t)}{|t|} = 0$ satisfies 
    \begin{equation}
    \label{eq:szulkin-weth-lemma}
    F(u)-F((1+s)u+v) + F'(u)[s(s/2+1)u+(1+s)v] \leq 0
    \qquad\;\text{for all } u,v\in\R,s\geq -1
    \end{equation}
    with strict inequality if $v=0$ and $s \ne 0$,
    see \cite[Lemma~38]{SzuWet}.
    This is very useful in applications.
 }   
\end{rem}

The assumptions on $I$ allow to formulate, for fixed $\lambda$,  a very useful variational principle for least action nonzero critical points of the functional $\cJ_\lambda$, which are also called ground state solutions of (\ref{eq:nonlinear-wave-intro}). To introduce this principle, we need some more notations. Since we assume that $A$ is nonnegative and has compact resolvent, the spectrum of $A$ consists of an unbounded
sequence of eigenvalues 
$$
0 \le \lambda_1 \le \lambda_2 \le \dots \le \lambda_n \le \dots \to +\infty
$$
(counted with multiplicity), and there exists an orthonormal basis of $H$ of associated eigenfunctions
$\zeta_n\in D(A)$, $n \in \N$. In the following, we fix $n \in \N$ and we consider values of $\lambda$ satisfying 
\begin{equation}
  \label{eq:lambda-main-spectral-assumption}
\lambda_n < \lambda < \lambda_{n+1},
\end{equation}
so that $\lambda$ lies in a gap between
two consecutive eigenvalues. Moreover, we let
$$
E_n:= \spann \{\zeta_1,\dots,\zeta_n\},
$$
and for $u \in E \setminus E_n$ we consider the subspace
\begin{equation}
  \label{eq:def-F-u}
F_n(u):= E_n \oplus \R u  
\end{equation}
and the half space\footnote{Here and in the following,
we let $\R^+ := (0,\infty)$ as usual.}
\begin{equation}
  \label{eq:def-F-u+}
  F_n(u)^+:=E_n \oplus \R^+ u   \:\subset F_n(u).
\end{equation}
Moreover, we let $\cNP_\lambda$ denote the associated {\em Nehari-Pankov set}\footnote{This set is also called Nehari-Pankov manifold or generalized Nehari manifold, but additional restrictions are needed to guarantee that $\cNP$ is a $C^1$-manifold, see \cite{pankov-2005}. We will not use such a manifold structure in this paper.},
defined by  
\begin{equation*}
    \cNP_\lambda:= \Bigl\{
        u \in E \setminus E_n
        \bigm| \cJ_\lambda'(u)\Big|_{F_n(u)} = 0
    \Bigr\}.
\end{equation*}
We then have the following result. 
\begin{thm}
    \label{main-abstract-result-existence-lambda}
Under the assumptions above, the Nehari-Pankov set $\cNP_\lambda$ is non-empty and contains all nonzero critical points of $\cJ_\lambda$.
    Moreover, we have 
    \begin{equation}
        \label{eq:inf-sup-characterization}
        c_\lambda:= \inf_{\cNP_\lambda} \cJ_\lambda = \inf_{u \in E \setminus E_n} \sup_{F_n(u)^+} \cJ_\lambda \:>\:0.
    \end{equation}
    Moreover, the first infimum is attained, and every minimizer
    of $\cJ_\lambda$ on $\cNP_\lambda$ is a critical point
    of $\cJ_\lambda$. Thus, all minimizers
    of $\cJ_\lambda$ on $\cNP_\lambda$ are ground state solutions of \eqref{eq:nonlinear-wave-intro}.
\end{thm}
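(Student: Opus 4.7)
The plan is to follow the Szulkin--Weth--Pankov template for Nehari--Pankov ground states adapted to the indefinite spectral-gap setting, organized around a reduction principle. I will first establish the \emph{reduction lemma}: for every $u \in E \setminus E_n$, the restriction $\cJ_\lambda|_{F_n(u)^+}$ attains a unique maximum at some $w(u) \in \cNP_\lambda$, with $\cJ_\lambda(w(u)) > 0$. Writing $u = u^+ + u^-$ with $u^+ \in E_n^\perp \setminus \{0\}$ and $u^- \in E_n$, one has $F_n(u)^+ = E_n \oplus \R^+ u^+$, and the $q_\lambda$-orthogonality gives $q_\lambda(tu^+ + v) = t^2 q_\lambda(u^+) + q_\lambda(v)$ with $q_\lambda(u^+) > 0$ and $q_\lambda|_{E_n}$ negative definite (since $\lambda > \lambda_n$). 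On this finite-dimensional slice, coercivity of $-\cJ_\lambda$ at infinity is immediate from the negative definiteness on $E_n$ and the superquadraticity in (I3), while $\cJ_\lambda(tu^+) > 0$ for small $t > 0$ by (I3); hence a maximizer exists in the interior, and it satisfies $\cJ_\lambda'(w)|_{F_n(u)} = 0$, i.e.\ $w \in \cNP_\lambda$. Uniqueness follows from the identity, valid for $w \in \cNP_\lambda$, $s > -1$, $v \in E_n$, derived using the Nehari--Pankov relations $q_\lambda(w) = I'(w)[w]$ and $q_\lambda(w,v) = I'(w)[v]$:
\begin{equation*}
\cJ_\lambda(w) - \cJ_\lambda((1+s)w + v) = -\tfrac{1}{2}q_\lambda(v) + \bigl[I((1+s)w+v) - I(w) - I'(w)[s(s/2+1)w + (1+s)v]\bigr];
\end{equation*}
both summands are $\ge 0$ (the first by negative definiteness of $q_\lambda|_{E_n}$, the second by (I4)), with strict positivity unless $s = 0$ and $v = 0$.

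The reduction lemma immediately yields nonemptiness of $\cNP_\lambda$ and the equality $c_\lambda = \inf_{\cNP_\lambda} \cJ_\lambda = \inf_u \sup_{F_n(u)^+} \cJ_\lambda$: for every $u \in E \setminus E_n$ the inner supremum equals $\cJ_\lambda(w(u)) \ge \inf_{\cNP_\lambda}\cJ_\lambda$, while for every $w \in \cNP_\lambda$ the uniqueness above gives $\cJ_\lambda(w) = \sup_{F_n(w)^+}\cJ_\lambda \ge \inf_u\sup_{F_n(u)^+}\cJ_\lambda$. A nonzero critical point $u$ of $\cJ_\lambda$ lies in $\cNP_\lambda$: it suffices to rule out $u \in E_n \setminus \{0\}$, where $q_\lambda(u) = I'(u)[u]$ would be simultaneously negative (by $\lambda > \lambda_n$) and nonnegative (since $I'(u)[u] \ge 2I(u) \ge 0$ follows from (I3) applied at $t = 1$ to the nondecreasing function $t \mapsto I(tu)/t^2$).

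For strict positivity $c_\lambda > 0$, using $F_n(u)^+ = F_n(u^+)^+$ and $E$-homogeneity it is enough to show $\inf \{\sup_{t > 0} \cJ_\lambda(tu) : u \in E_n^\perp,\, \|u\|_E = 1\} > 0$. On this class $q_\lambda(u) \ge c_0 := (\lambda_{n+1}-\lambda)/(1+\lambda_{n+1}) > 0$. Arguing by contradiction, a sequence $(u_k) \subset E_n^\perp$ with $\|u_k\|_E = 1$ and $\sup_{t>0}\cJ_\lambda(tu_k) \to 0$ admits, by the compact embedding $E \hookrightarrow H$ (from the compact resolvent of $A$), a weak $E$-limit $u_*$ with $u_k \to u_*$ in $H$. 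If $u_* = 0$ then $q_\lambda(u_k) \to 1$ while (I1) gives $I(u_k) \to 0$, hence $\cJ_\lambda(u_k) \to \tfrac{1}{2}$, contradicting $\cJ_\lambda(u_k) \le \sup_t \cJ_\lambda(tu_k) \to 0$. If $u_* \neq 0$, passing to the liminf in $\cJ_\lambda(tu_k)$ for each fixed $t > 0$ (lsc of $q_\lambda$ plus (I1)) yields $\cJ_\lambda(tu_*) \le 0$ for all $t$, and dividing by $t^2$ and letting $t \to 0^+$ via (I3) forces $q_\lambda(u_*) \le 0$, contradicting $q_\lambda(u_*) > 0$ on $E_n^\perp \setminus \{0\}$.

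For attainment, the main technical step (which I expect to be the chief obstacle together with the positivity above) is to show a minimizing sequence $(u_k) \subset \cNP_\lambda$ can be chosen bounded in $E$; this is where the superquadraticity (I3)/(I4) combined with the reduction lemma and Ekeland's variational principle enter, in the spirit of the Szulkin--Weth homeomorphism between $\cNP_\lambda$ and the unit sphere of $E_n^\perp$. Extracting a weak limit $u_*$ and using (I1) together with the compact embedding $E \hookrightarrow H$, one obtains $\cJ_\lambda(u_*) \le \liminf \cJ_\lambda(u_k) = c_\lambda$. A concentration-type argument exploiting $c_\lambda > 0$ rules out $u_* \in E_n$ (in particular $u_* \ne 0$); the reduction lemma then identifies $u_*$ as the maximizer on $F_n(u_*)^+$, so $u_* \in \cNP_\lambda$ and $\cJ_\lambda(u_*) = c_\lambda$. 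Finally, if a minimizer $u_0$ were not a critical point, a standard deformation along $-\cJ_\lambda'(u_0)$, followed by applying the reduction lemma to a small perturbation, would produce a point of $\cNP_\lambda$ with energy strictly below $c_\lambda$, contradicting minimality.
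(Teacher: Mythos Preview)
Your reduction lemma, the inf-sup identity, the containment of nonzero critical points, and the deformation argument for ``minimizer $\Rightarrow$ critical point'' are all essentially the paper's route (Proposition~2.2, Corollary~2.3, Lemma~2.7). Your direct proof of $c_\lambda>0$ via a contradiction on the unit sphere of $E_n^\perp$ is a correct variant; the paper instead obtains positivity a posteriori from attainment combined with the elementary bound $\sup_{F_n(u)^+}\cJ_\lambda>0$.

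The gap is in your attainment step. First, under (I1)--(I4) alone there is no Ambrosetti--Rabinowitz--type coercivity, and it is not clear how ``Ekeland plus the Szulkin--Weth homeomorphism'' yields boundedness of a minimizing sequence $(u_k)\subset\cNP_\lambda$ in $E$; you do not indicate the mechanism. Second, even granting boundedness and a weak limit $u_*\in E\setminus E_n$, your sentence ``the reduction lemma then identifies $u_*$ as the maximizer on $F_n(u_*)^+$'' is unjustified: the reduction lemma gives a unique maximizer $m_\lambda(u_*)$ on $F_n(u_*)^+$, but nothing forces $m_\lambda(u_*)=u_*$. The Nehari--Pankov relation $q_\lambda(u_k)=I'(u_k)[u_k]$ does not pass to weak limits (neither side is weakly continuous), and weak lower semicontinuity of $\cJ_\lambda$ only gives $\cJ_\lambda(u_*)\le c_\lambda$, which says nothing about $\cJ_\lambda(m_\lambda(u_*))=\sup_{F_n(u_*)^+}\cJ_\lambda\ge\cJ_\lambda(u_*)$.

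The paper sidesteps both issues by never asking for boundedness of $(u_k)$. It passes to the normalized $E_n^\perp$-components $w_k:=\frac{u_k-\Pi_n u_k}{\|u_k-\Pi_n u_k\|_E}$, which lie on the unit sphere and satisfy $\sup_{F_n(w_k)^+}\cJ_\lambda=\cJ_\lambda(u_k)\to c_\lambda$. The key lemma (Lemma~2.4) is that the map $w\mapsto\sup_{F_n(w)^+}\cJ_\lambda$ is weakly lower semicontinuous at nonzero weak limits: if $w_k\weak w\ne 0$, then for every $tw+v\in F_n(w)^+$ one tests with $tw_k+v\in F_n(w_k)^+$ and uses weak lsc of $q_\lambda$ together with (I1). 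The nonvanishing $w\ne 0$ is obtained exactly by your positivity argument (plugging $tw_k$ and letting $k\to\infty$). Once $\sup_{F_n(w)^+}\cJ_\lambda\le c_\lambda$, the maximizer $m_\lambda(w)\in\cNP_\lambda$ realizes $c_\lambda$. This is the missing idea in your sketch; incorporating it fixes the attainment paragraph without further changes.
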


This result is essentially known (see \cite{szulkin-weth-2009,SzuWet} and \cite[Section 4]{bartsch-mederski-2017-JFPT}), but we could not find it in this precise form in the literature in the framework given by our assumptions. We note that -- in the context of different semilinear variational problems -- the idea of using the Nehari-Pankov set as a natural constraint goes back to \cite{pankov-2005, ramos-tavares-2008} and has then been developed further in more general settings,
see e.g.\,\cite{szulkin-weth-2009,SzuWet} and \cite{de-Paiva-Kryszewski-Szulkin-2017,bartsch-mederski-2015,bartsch-mederski-2017-JFA,mandel-weth-2025}.

The key additional abstract result of the present paper is the following theorem, which states that the masses of ground state solutions of \eqref{eq:nonlinear-wave-intro} form a connected set if $\lambda$ is varied in the spectral gap given by (\ref{eq:lambda-main-spectral-assumption}). 
\begin{thm}
    \label{M-n-char}
    For $\lambda \in (\lambda_n, \lambda_{n+1})$, let 
    \begin{equation*}
        Q_\lambda
        := \bigl\{u \in \cNP_\lambda \mid \cJ_\lambda(u)
        = c_\lambda\bigr\}  
    \end{equation*}
    and
    \begin{equation*}
        M_n:= \biggl\{\frac{\|u\|_H^2}{2} \Bigm|
        \text{$u \in Q_\lambda$ for some $\lambda \in 
            (\lambda_n, \lambda_{n+1})$}\biggr\}.
    \end{equation*}
    Then,
    \begin{equation}
        \label{eq:M-n-char}
        (0,g_n) \subseteq M_n \subseteq (0,g_n] \qquad \text{for some $g_n>0$.}  
    \end{equation}
\end{thm}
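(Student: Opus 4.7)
The plan rests on compactness and continuity of the set-valued map $\lambda \mapsto Q_\lambda$, careful analysis of the endpoint $\lambda \to \lambda_{n+1}^-$, and an envelope-based intermediate value argument.

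First, using the inf-sup formula (\ref{eq:inf-sup-characterization}) I would write $c_\lambda = \inf_{u \in E \setminus E_n} \phi_u(\lambda)$ where $\phi_u(\lambda) := \sup_{F_n(u)^+}\cJ_\lambda$ is convex in $\lambda$ as a supremum of affine functions. Combining (I1), the inequality $I'(u)u \geq 2 I(u)$ following from (I3), and the construction of Theorem~\ref{main-abstract-result-existence-lambda}, one deduces that $\lambda \mapsto c_\lambda$ is continuous and that for any sequence $\lambda_k \to \lambda_\infty \in (\lambda_n, \lambda_{n+1})$ with $u_k \in Q_{\lambda_k}$, a subsequence of $u_k$ converges strongly in $E$ to some $u_\infty \in Q_{\lambda_\infty}$. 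Moreover $\phi_{u_\lambda}'(\lambda) = -\|u_\lambda\|_H^2/2 < 0$ at every $u_\lambda \in Q_\lambda$ by the envelope theorem, which forces $c_\lambda$ to be strictly decreasing.

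Next, the test direction $\zeta_{n+1}$ yields $c_\lambda \leq \phi_{\zeta_{n+1}}(\lambda) \leq C$ uniformly on the gap, with $\phi_{\zeta_{n+1}}(\lambda) \to 0$ as $\lambda \to \lambda_{n+1}^-$ since the form $q_\lambda$ degenerates along $\R\zeta_{n+1}$ and the super-quadraticity of $I$ forces the $\cJ_\lambda$-maximizer to shrink. The Nehari--Pankov identity $c_\lambda = \frac{1}{2}I'(u_\lambda)u_\lambda - I(u_\lambda)$ combined with (I3) then yields (a) a uniform bound on $\|u_\lambda\|_E$ on the gap, defining $g_n := \sup M_n \in (0, \infty)$, and (b) $\|u_\lambda\|_E \to 0$ as $\lambda \to \lambda_{n+1}^-$. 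Applying the envelope theorem to $c_\mu \leq \phi_{u_\lambda}(\mu)$ and to its symmetric counterpart $c_\lambda \leq \phi_{u_\mu}(\lambda)$ gives the sandwich $c'_R(\lambda) \leq -\|u_\lambda\|_H^2/2 \leq c'_L(\lambda)$ for every $u_\lambda \in Q_\lambda$, together with the Lipschitz estimate $|c(\mu)-c(\lambda)| \leq g_n|\mu-\lambda| + o(|\mu-\lambda|)$. Hence $c$ is Lipschitz on the gap, absolutely continuous, and differentiable almost everywhere, and at every differentiability point the set $F(\lambda) := \{\|u\|_H^2/2 : u \in Q_\lambda\}$ reduces to $\{-c'(\lambda)\}$; the inclusion $M_n \subseteq (0, g_n]$ follows.

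For $(0, g_n) \subseteq M_n$, given $\mu \in (0, g_n)$ I set $m^+(\lambda) := \max F(\lambda)$, which is upper semicontinuous in $\lambda$ by the compactness and closed graph properties above. By the definition of $g_n$ one has $m^+(\lambda^*) > \mu$ at some $\lambda^*$, while the endpoint analysis gives $m^+(\lambda) < \mu$ for $\lambda$ near $\lambda_{n+1}$. Taking $\lambda_\mu := \sup\{\lambda \in [\lambda^*, \lambda_{n+1}) : m^+(\lambda) \geq \mu\}$, upper semicontinuity yields $m^+(\lambda_\mu) \geq \mu$, and combining the envelope sandwich at $\lambda_\mu$ with the Lipschitz continuity of $c$ and the compactness of $Q_{\lambda_\mu}$ pins $\mu$ down as the mass of some $u_\mu \in Q_{\lambda_\mu}$. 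The main obstacle is exactly this last pinning-down step: upper semicontinuity of $m^+$ alone does not deliver a set-valued intermediate value theorem when $F(\lambda)$ may fail to be an interval. What rescues the argument is the envelope inclusion $-F(\lambda) \subseteq [c'_R(\lambda), c'_L(\lambda)]$, which collapses to a singleton at every differentiability point of $c$, confining the multivaluedness of $F$ to an at-most countable exceptional set; a careful topological argument at $\lambda_\mu$ using this structure, together with the closed graph property and strict monotonicity of $c$, extracts the required ground state of mass $\mu$.
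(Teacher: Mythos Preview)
Your envelope sandwich and endpoint analysis are essentially the paper's Lemmas~\ref{c-lambda-continuity-etc} and~\ref{flat-initial-curve} (the paper in fact proves the sharper $c_\lambda=o(\lambda_{n+1}-\lambda)$). But the gap you flag in the ``pinning-down'' step is real, and the fix you sketch does not close it. At $\lambda_\mu=\sup\{\lambda:m^+(\lambda)\ge\mu\}$ you obtain only $m^-(\lambda_\mu)\le\mu\le m^+(\lambda_\mu)$; since $F(\lambda_\mu)$ need not be an interval, this does not yield $\mu\in F(\lambda_\mu)$. Your observation that $F(\lambda)$ is a singleton at differentiability points of $c$ is correct but useless here: $\lambda_\mu$ is determined by $\mu$ and nothing prevents it from lying in the exceptional set, and neither the closed-graph property nor strict monotonicity of $c$ supplies the missing connectedness. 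Incidentally, the auxiliary claims you invoke along the way (strong $E$-convergence of minimizers, a uniform $E$-bound over the whole gap, and $\|u_\lambda\|_E\to0$ at the right endpoint) are not available from (I1)--(I4) alone, though as the paper's proof shows they are also not needed.

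The paper bypasses the connectedness issue with a tilting argument. It sets $g_n:=\sup_\lambda\liminf_{\eps\to0^+}\frac{c_\lambda-c_{\lambda+\eps}}{\eps}$ (so $M_n\subseteq(0,g_n]$ is immediate from the sandwich), and for a given $m\in(0,g_n)$ chooses $\lambda'$ with $\liminf_{\eps\to0^+}\frac{c_{\lambda'}-c_{\lambda'+\eps}}{\eps}>m$. The continuous function $h(\lambda):=c_\lambda+m(\lambda-\lambda_{n+1})$ on $[\lambda',\lambda_{n+1}]$ (with $h(\lambda_{n+1}):=0$) then has an \emph{interior} minimum at some $\lambda_*\in(\lambda',\lambda_{n+1})$: the choice of $\lambda'$ forces $h$ to decrease to the right at $\lambda'$, and $c_\lambda=o(\lambda_{n+1}-\lambda)$ forces $h$ below $0=h(\lambda_{n+1})$ near the right endpoint. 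At $\lambda_*$ the two one-sided minimality inequalities, fed into the envelope sandwich, give simultaneously $\inf_{Q_{\lambda_*}}\tfrac12\|\cdot\|_H^2\ge m$ and $\sup_{Q_{\lambda_*}}\tfrac12\|\cdot\|_H^2\le m$, hence $F(\lambda_*)=\{m\}$. No compactness of $Q_\lambda$ and no connectedness of $F(\lambda)$ is used; the minimization of the tilted function replaces the intermediate-value argument you are searching for precisely because it forces both one-sided inequalities at the same point.
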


We emphasize that the connectedness of the set $M_n$ stated in Theorem~\ref{M-n-char} does not follow from a standard continuity argument, since we cannot expect connectedness of the set of associated ground state solutions in this general variational framework. Instead, the connectedness of $M_n$ follows by an inspection of the special saddle point nature of the Nehari-Pankov sets $\cNP_\lambda$, while it is not a sole consequence of the continuity of ground state levels $c_\lambda$.

\medbreak
Clearly, to deduce existence and multiplicity results of normalized solutions satisfying (\ref{eq:normalization-condition-abstract}), it is important to know for which $n \in \N$ the inequality $\mu \le g_n$ holds. Our proof of Theorem~\ref{M-n-char} shows that 
\begin{equation}
  \label{eq:g-n-characterization}
    g_n:= \sup_{\lambda \in (\lambda_n, \lambda_{n+1})}g(\lambda),
\end{equation}
    where $g:(\lambda_n, \lambda_{n+1}) \to \R$ is defined by
    $g(\lambda) :=  \liminf \limits_{\eps \to 0^+}\frac{c_{\lambda}-c_{\lambda+\eps}}{\eps}.$ For the special case $\lambda < \lambda_1$, a similar result was recently obtained 
by De Coster, Dovetta, Serra and the first author \cite{DDGS}. In this case, ground state solutions of (\ref{eq:nonlinear-wave-intro}) are minimizers of $\cJ_\lambda$ on the classical Nehari set. Theorem~\ref{M-n-char} is inspired by this previous result, but the proof is more involved in the more general setting which exploits the Nehari-Pankov variational structure instead of the pure Nehari one.

\medbreak
The following is an immediate corollary of Theorem~\ref{M-n-char}. 
\begin{thm}
    \label{abstract_mult}
    Let $k$ be a positive integer, and let $J_1,\dots,J_k$
    be disjoint spectral gaps\footnote{We call
    an interval $J \subset \R$
    a \emph{spectral gap of $A$}
    if $J = (\lambda_n,\lambda_{n+1})$ for some $n \in \N$.}
    of $A$.
    Then, there exists $\mu_k > 0$
    such that, for all $\mu \in (0, \mu_k)$,
    there exist $\Lambda_1, \dotsc, \Lambda_k \in \R$ with $\Lambda_i \in J_i$ for $i = 1,\dots,k$
    and $u_1, \dotsc, u_k \in E$ with
    \begin{equation*}
        Au_i = \Lambda_i u_i + I'(u_i), \qquad \text{and}\qquad \| u_i \|_H = \mu.
    \end{equation*}
    In particular, (\ref{eq:nonlinear-wave-intro}) has at least
    $k$ different solutions $(u,\lambda)$ satisfying (\ref{eq:normalization-condition-abstract}). 
\end{thm}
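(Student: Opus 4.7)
The plan is to reduce directly to Theorem~\ref{M-n-char}: since the statement is labelled an ``immediate corollary'', the work consists of applying that theorem once in each prescribed gap and organising a single threshold mass that works simultaneously for all of them.

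For each $i \in \{1,\dots,k\}$ I would write the given gap as $I_i = (\lambda_{n_i},\lambda_{n_i+1})$ for some $n_i \in \N$, and then invoke Theorem~\ref{M-n-char} to obtain constants $g_{n_i} > 0$ with $(0,g_{n_i}) \subseteq M_{n_i}$. Since the $I_i$ are disjoint, the indices $n_i$ are pairwise distinct, but that is irrelevant here; what matters is that the minimum
\[
    \mu_k := \min_{1 \le i \le k} \sqrt{2\,g_{n_i}}
\]
is strictly positive as the minimum of finitely many positive numbers.

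Fix $\mu \in (0,\mu_k)$. Then $\mu^2/2 < g_{n_i}$ for every $i$, hence $\mu^2/2 \in M_{n_i}$. Unfolding the definition of $M_{n_i}$, this gives $\Lambda_i \in (\lambda_{n_i},\lambda_{n_i+1}) = I_i$ and $u_i \in Q_{\Lambda_i}$ with $\|u_i\|_H^2/2 = \mu^2/2$, so that $\|u_i\|_H = \mu$. By Theorem~\ref{main-abstract-result-existence-lambda}, every minimiser of $\cJ_{\Lambda_i}$ on $\cNP_{\Lambda_i}$ is a critical point of $\cJ_{\Lambda_i}$, and therefore
\[
    A u_i = \Lambda_i u_i + I'(u_i)
\]
holds in the weak sense demanded in the statement.

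It remains to check that the pairs $(u_i,\Lambda_i)$ are pairwise distinct, which will in particular yield $k$ distinct solutions of the normalized problem \eqref{eq:nonlinear-wave-intro}--\eqref{eq:normalization-condition-abstract}. Disjointness of the spectral gaps forces $\Lambda_i \ne \Lambda_j$ whenever $i \ne j$; if additionally $u_i = u_j$, subtracting the two equations would give $(\Lambda_i - \Lambda_j)u_i = 0$, contradicting $\|u_i\|_H = \mu > 0$. Thus the $u_i$ themselves are pairwise distinct too, finishing the argument. There is no genuine obstacle at this stage: all the analytic content is packaged inside Theorem~\ref{M-n-char}, and the deduction above is essentially bookkeeping across the $k$ chosen gaps.
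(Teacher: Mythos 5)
Your proof is correct and is essentially the bookkeeping the paper has in mind when it calls Theorem~\ref{abstract_mult} an ``immediate corollary'' of Theorem~\ref{M-n-char}: for each gap $I_i = (\lambda_{n_i},\lambda_{n_i+1})$ apply Theorem~\ref{M-n-char} to get $(0,g_{n_i})\subseteq M_{n_i}$, take $\mu_k = \min_i \sqrt{2g_{n_i}}$, unfold $\mu^2/2\in M_{n_i}$ to produce $(u_i,\Lambda_i)$, use Theorem~\ref{main-abstract-result-existence-lambda} to see each $u_i$ is a critical point of $\cJ_{\Lambda_i}$, and conclude distinctness from the disjointness of the gaps. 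The paper does not spell out a separate proof, and what you wrote supplies precisely the missing details, including the distinctness argument $(\Lambda_i-\Lambda_j)u_i=0\Rightarrow u_i=0$.
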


Theorem~\ref{theorem-polyharmonic} follows in a rather direct way from Theorem~\ref{abstract_mult}, one merely needs to verify that the assumptions of Theorem~\ref{theorem-polyharmonic} fit into the present abstract framework.  

\medbreak
Without additional knowledge on the interval lengths $g_n$  in \eqref{eq:M-n-char}, Theorem~\ref{abstract_mult} can only guarantee the existence of solutions with small mass $\mu$. To obtain solutions with large mass as in Theorems~\ref{graph_mult} and \ref{torus_mult}, we need effective lower bounds for $g_n$. As far as we can see, the characterization given in (\ref{eq:g-n-characterization}) does not give rise to useful lower bounds, so we follow a different idea. Our strategy is to use the length of the associated spectral gap $(\lambda_{n},\lambda_{n+1})$ together with variational characterizations of eigenvalues and bounds for associated eigenfunctions. Moreover, we use Weyl type estimates to bound the length of suitable spectral gaps from below. 

\medbreak
In particular, we are able to prove
Theorem~\ref{graph_mult} by exploiting the presence
of large spectral gaps for Laplacians on compact
metric graphs (a consequence of Weyl's law,
which holds for all self-adjoint realizations
of the Laplacian on graphs as shown in \cite{BolEnd}). Classical spectral theory then shows 
that for parameters $\lambda$ at the bottom of a large spectral gap, solutions of (\ref{graph-ode}) have a large
$L^\infty$-norm. We then conclude using an ODE argument
providing a suitable ``$L^\infty$ to $L^2$'' estimate.

\medbreak
In the case of Theorem~\ref{torus_mult}, we use instead the explicit structure of the set of eigenvalues and eigenfunctions of the Laplacian on the flat torus $\Omega = S^1 \times S^1$ together with a classical bound from analytic number theory for the multiplicities of these eigenvalues. 

\medbreak
We point out again that the methods developed in the present paper
do not rely on any assumptions related to mass-critical exponents. This makes our method particularly well-suited to obtain
new results in the usually harder mass-supercritical case
(as is the case in \cite{DDGS} for minimizers
on the Nehari manifold).  

\medbreak
The paper is organized as follows. In Section~\ref{sec:nehari-pankov}, we further develop the abstract framework of the indefinite variational problem associated with (\ref{eq:nonlinear-wave-intro}) and prove Theorem~\ref{main-abstract-result-existence-lambda}. In Section~\ref{sec:lambda-depend-minim}, we then study the $\lambda$-dependence
of the Nehari-Pankov solutions obtained in Theorem~\ref{main-abstract-result-existence-lambda}, and we complete the proof of Theorem~\ref{M-n-char}. In the end of Section~\ref{sec:lambda-depend-minim}, we then provide useful estimates for the action of Nehari-Pankov solutions under additional assumptions on the functional $I$ and the operator $A$. In Section~\ref{sec:furth-estim-case}, we then specialize to the case $H = L^2(\Omega)$ on a compact metric measure space and derive explicit lower bounds on the largest mass attainable by a Nehari-Pankov ground state within a given spectral gap, which underlie the large-mass results in Theorems~\ref{graph_mult} and~\ref{torus_mult}. Section~\ref{sec:nonl-schr-equat} is devoted to the nonlinear Schrödinger equation on compact metric graphs and to the proof of Theorem~\ref{graph_mult}. Finally, in Section~\ref{sec:nonl-biharm-equat}, we study the equation~(\ref{torus-eq}) on the flat $2$-torus, and we complete the proof of Theorem~\ref{torus_mult}.

\section{Existence of Nehari-Pankov solutions
for fixed $\lambda$}
\label{sec:nehari-pankov}
This section is devoted to the proof
of Theorem~\ref{main-abstract-result-existence-lambda}. We continue to use the notations $H$, $E$, $A$, $I$, $q_\lambda$, $\cJ_\lambda$, $E_n$, $F_n(u)$, $F_n(u)^+$ and $\cNP_\lambda$ introduced in the introduction, and we assume throughout this section that assumptions $(I1)$--$(I4)$ are satisfied.

We first note that, taking $v=0$ and $s=-1$ in $(I4)$, we deduce that for all $u \in E \setminus \{0\}$ we have
\begin{equation}
  \label{conseq_I4}
I(u) < \frac{1}{2}I'(u)u.  
\end{equation}
Next, arguing as in \cite[Chapter~4]{SzuWet}, 
we show that $\cNP_\lambda$ contains all
nonzero critical points of $\cJ_\lambda$.
Let $u \in E \setminus \{ 0 \}$ be a critical
point of $\cJ_\lambda$. Then, according to~\eqref{conseq_I4},
\begin{equation*}
    \cJ_\lambda(u)
    = \cJ_\lambda(u) - \frac12 \cJ_\lambda'(u)[u]
    = \frac12 I'(u)[u] - I(u)
    > 0.
\end{equation*}
Moreover, one has
\begin{equation}
    \label{E_n-nonpositive}
    \sup_{v \in E_n} \cJ_\lambda(v)
    \le \sup_{v \in E_n} \frac{q_\lambda(v)}{2}
    \le 0.
\end{equation}
This implies that $u$ does not belong to $E_n$,
so that $u$ belongs to $\cNP_\lambda$ as claimed.

In the following, it is useful to consider also the orthogonal complement $E_n^\perp \subset E$ of the subspace $E_n$, given by
$$
E_n^\perp := \{w \in E \mid \langle w,v \rangle_E = 0 \text{ for all $v \in E_n$}\} = \{w \in E \mid \langle w,v \rangle_H = 0 \text{ for all $v \in E_n$}\}.
$$
Here, the second equality follows from the fact that
$$
\langle w, \zeta_k \rangle_E = (1+\lambda_k)\langle w, \zeta_k \rangle_H
\qquad \text{for every $k \in \N$, $w \in E$.}
$$
Moreover, every $u \in E \setminus E_n$ can be uniquely written as
$$
u = s w + v \qquad \text{with $s>0$, $v \in E_n$ and $w \in E_n^\perp$ satisfying $q_\lambda(w)=1$,}
$$
and this representation yields (remarking that $w$ belongs to $F_n(u)^+$)
$$
\sup_{F_n(u)^+}\cJ_\lambda
\ge \cJ_\lambda(tw)= \frac{t^2}{2}q_\lambda(w) - I(tw)
= t^2\Bigl(\frac12 - \frac{I(tw)}{t^2}\Bigr)
\qquad \text{for every $t>0$.}
$$
Hence the first limit in (\ref{eq:I-limits}) implies that
\begin{equation}
\label{sup-positive}
0 < \sup_{F_n(u)^+}\cJ_\lambda
= \sup_{\overline{F_n(u)^+}}\cJ_\lambda.
\end{equation}
Here, the second equality follows since
the relative boundary of $F_n(u)^+$ in $F_n(u)$
is the subspace $E_n$ and since
$\cJ_\lambda \le 0$ on $E_n$ according
to \eqref{E_n-nonpositive}. Moreover, we point out that
\begin{equation}
  \label{eq:universal-prop-F(u)}
F_n(u)^+ = F_n(\tilde u)^+ \qquad \text{for every $u \in E \setminus E_n$ and every $\tilde u \in F_n(u)^+$.}   
\end{equation}
We shall need a boundedness property of the set where $\cJ_\lambda$ takes positive values in $F_n(u)^+$. We state this property directly in a form which allows for variation of $\lambda$, since this will be used in the next section.

 \begin{lem}\label{B-R-Lemma}
   Let $I \subset (\lambda_n,\lambda_{n+1})$ be a compact set, and let $u \in E \setminus E_n$. Then there exists $R>0$ with 
$$
\cJ_\lambda \le 0 \qquad \text{on $F_n(u)^+ \setminus B_R(0)\quad$ for all $\lambda \in I$,}
$$
where the ball $B_R(0)$ is taken in the norm $\|\cdot\|_E$.
\end{lem}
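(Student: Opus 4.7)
The plan is to argue by contradiction. Suppose there exist sequences $\lambda_k$ in the given compact set and $z_k \in F_n(u)^+$ with $\|z_k\|_E \to \infty$ and $\cJ_{\lambda_k}(z_k) > 0$. Writing $u = u_0 + w$ with $u_0 \in E_n$ and $w$ the projection of $u$ onto $E_n^\perp$ (which is nonzero because $u \notin E_n$), one checks that $F_n(u)^+ = E_n \oplus \R^+ w$, so each $z_k$ decomposes uniquely as $z_k = v_k + s_k w$ with $v_k \in E_n$ and $s_k > 0$. After passing to a subsequence, I may assume $\lambda_k \to \lambda_* \in (\lambda_n, \lambda_{n+1})$. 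The normalizations $\tilde z_k := z_k/\|z_k\|_E = \tilde v_k + \tilde s_k w$ all lie in the finite-dimensional space $E_n \oplus \R w$ with $\|\tilde z_k\|_E = 1$, so a further subsequence converges strongly in $E$ to some $\tilde z_* = \tilde v_* + \tilde s_* w$ with $\|\tilde z_*\|_E = 1$ and $\tilde s_* \ge 0$. Since $\{\zeta_k\}$ simultaneously diagonalizes $q_\lambda$ for all $\lambda$, the subspaces $E_n$ and $\R w$ are $q_\lambda$-orthogonal, so $q_{\lambda_k}(z_k) = q_{\lambda_k}(v_k) + s_k^2 q_{\lambda_k}(w)$, and on $E_n$ one has $q_{\lambda_*}(\tilde v) \le -(\lambda_* - \lambda_n)\|\tilde v\|_H^2$.

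The argument now splits according to the value of $\tilde s_*$. If $\tilde s_* = 0$, then $\tilde v_* \ne 0$, and dividing $\cJ_{\lambda_k}(z_k) > 0$ by $\|z_k\|_E^2$ while using the nonnegativity of $I$ provided by (I2) gives
$0 < \cJ_{\lambda_k}(z_k)/\|z_k\|_E^2 \le \tfrac{1}{2} q_{\lambda_k}(\tilde z_k) \to \tfrac{1}{2} q_{\lambda_*}(\tilde v_*) < 0$,
a contradiction; here I use equivalence of the $H$- and $E$-norms on the finite-dimensional subspace $E_n \oplus \R w$ to pass to the limit. If instead $\tilde s_* > 0$, I claim that $I(z_k)/\|z_k\|_E^2 \to +\infty$, which combined with the boundedness of $q_{\lambda_k}(\tilde z_k)$ contradicts the positivity of $\cJ_{\lambda_k}(z_k)$. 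Setting $t_k := \|z_k\|_E \to \infty$, the monotonicity in (I3) yields, for every fixed $T > 0$ and all $k$ with $t_k \ge T$,
$I(z_k)/t_k^2 = I(t_k \tilde z_k)/t_k^2 \ge I(T \tilde z_k)/T^2$.
Sending $k \to \infty$ first, with $T$ fixed, using continuity of $I$ along the strong convergence $\tilde z_k \to \tilde z_*$, gives $\liminf_k I(z_k)/\|z_k\|_E^2 \ge I(T \tilde z_*)/T^2$, and then letting $T \to \infty$ concludes via (I3) applied at the nonzero vector $\tilde z_*$.

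The main obstacle is the second case: assumption (I3) is a pointwise superlinearity along rays from the origin, but the direction $\tilde z_k$ along which $z_k$ escapes depends on $k$. The two-parameter limit (first $k \to \infty$ with $T$ fixed, then $T \to \infty$) sidesteps this by exploiting the \emph{monotonicity} of $t \mapsto I(tz)/t^2$ rather than merely its asymptotic value, thereby transferring the pointwise growth condition at the limit direction $\tilde z_*$ into an effective superquadratic lower bound along the actual sequence $(z_k)$.
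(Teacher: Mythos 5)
Your argument is correct and follows essentially the same route as the paper: argue by contradiction, normalize along the escaping sequence in the finite-dimensional space $F_n(u)$, pass to a subsequential limit direction, and invoke the superlinearity of $I$ from (I3). The only real difference is that you split into two cases according to whether the limit direction lies in $E_n$; the paper avoids this by bounding $q_{\lambda^k}$ above by $q_{\lambda_n}$ and applying (I3) directly to the limit direction (the required $t_*$ then exists trivially when $q_{\lambda_n}(w)\le 0$), and in fact your Case~2 argument already covers Case~1 as written, since it only needs $\tilde z_*\ne 0$, so the split is unnecessary.
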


\begin{proof}
  We suppose by contradiction that this is false.
  Then there exist sequences $(\lambda^k)^k$
  in $I$ and $(u_k)_k$ in $F_n(u)^+$ with $\|u_k\|_E \to \infty$ and
  \begin{equation}
    \label{eq:contradiction-assumption}
  \cJ_{\lambda^k}(u_k) > 0 \qquad \text{for all $k \in \N$.}
  \end{equation}
  Since the norms $\|\cdot\|_E$ and $\|\cdot\|_H$ are equivalent on the finite-dimensional space $F_n(u)$, we then also have
  $t_k:= \|u_k\|_H \to \infty$ as $k \to \infty$.

  Since the set $S := \{v \in F_n(u) \mid \|v\|_H = 1\}$ is compact, we may pass to a subsequence with $\lambda^k \to \lambda \in I$ and
  $$
  w_k:= \frac{u_k}{t_k} \to w \qquad \text{in $E$ as $k \to \infty$,}
  $$
  where $w \in S$. Moreover, by the second limit in (\ref{eq:I-limits}), there exists $t_*>0$ with
  \begin{equation}
    \label{eq:t-star-w-ineq}
  \frac{I(t_* w)}{t_*^2} > \frac{q_{\lambda_n}(w)}{2}.
  \end{equation}
  Since $\lambda^k > \lambda_n$ for all $k$
  and since $t_k \ge t_*$ for large $k$, it follows that
\begin{align*}
\cJ_{\lambda^k}(u_k) &= \frac{1}{2}q_{\lambda^k}(u_k)- I(u_k) \le \frac{1}{2}q_{\lambda_n}(u_k)- I(u_k)= t_k^2\Bigl(\frac{q_{\lambda_n}(w_k)}{2}- \frac{I(t_k w_k)}{t_k^2}\Bigr)\\
& \le t_k^2 \Bigl(\frac{q_{\lambda_n}(w_k)}{2}-\frac{I(t_* w_k)}{t_*^2}\Bigr)=
t_k^2 \Bigl(\frac{q_{\lambda_n}(w)}{2}
- \frac{I(t_* w)}{t_*^2} + o(1)\Bigr)
\end{align*}
as $k \to \infty$. Hence,
   $$
   \cJ_{\lambda^k}(u_k) \le 0 \qquad \text{for large $k$,}
   $$
   contrary to (\ref{eq:contradiction-assumption}).
   The claim thus follows.
\end{proof}

\begin{prop} \label{prop:criterionB2I}
Let $u \in \cNP_\lambda$. Then we have
       \begin{equation}
      \label{unique-max-statement}
        \cJ_\lambda(u)>\cJ_\lambda(w) \qquad \text{for all $w \in F_n(u)^+ \setminus \{u\}$}.
    \end{equation}
Namely, $u$ is the unique (and strict) global maximizer of the restriction of $\cJ_\lambda$ to $F_n(u)^+$.
  \end{prop}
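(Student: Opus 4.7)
The plan is to parameterize each $w \in F_n(u)^+$ uniquely as $w = (1+s)u + v$ with $s > -1$ and $v \in E_n$ (which is possible since $F_n(u)^+ = E_n \oplus \R^+ u$ and $u \notin E_n$), and to show that the difference $\cJ_\lambda(u) - \cJ_\lambda(w)$ decomposes into two nonnegative contributions: one furnished by the structural assumption (I4), and one coming from the fact that $q_\lambda$ is negative definite on $E_n$ because $\lambda > \lambda_n$.

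First I would expand, using the bilinearity and symmetry of $q_\lambda$,
\begin{equation*}
q_\lambda((1+s)u + v) - q_\lambda(u) = 2s(s/2+1)\,q_\lambda(u) + 2(1+s)\,q_\lambda(u,v) + q_\lambda(v).
\end{equation*}
The defining relation $u \in \cNP_\lambda$, namely $\cJ_\lambda'(u)[\tilde v] = 0$ for every $\tilde v \in F_n(u) = E_n \oplus \R u$, yields in particular $q_\lambda(u) = I'(u)[u]$ (test against $u$) and $q_\lambda(u,v) = I'(u)[v]$ for every $v \in E_n$. Plugging these identities into the expansion above and combining with the $I$-terms, one obtains
\begin{equation*}
\cJ_\lambda(u) - \cJ_\lambda(w) = \bigl[I((1+s)u+v) - I(u) - I'(u)[s(s/2+1)u + (1+s)v]\bigr] - \tfrac{1}{2}q_\lambda(v).
\end{equation*}

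The bracketed quantity is $\ge 0$ by (I4), with strict inequality whenever $v = 0$ and $s \ne 0$. The term $-\tfrac{1}{2}q_\lambda(v)$ is $\ge 0$ because $v \in E_n$ gives $q_\lambda(v) \le (\lambda_n - \lambda)\|v\|_H^2 \le 0$, with strict positivity exactly when $v \ne 0$. Summing the two, we get $\cJ_\lambda(u) - \cJ_\lambda(w) \ge 0$, and equality forces first $v = 0$ (from the spectral-gap term) and then $s = 0$ (from the strictness in (I4)), i.e.\ $w = u$, which is (\ref{unique-max-statement}).

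The only delicate point is algebraic rather than analytic: recognizing that the factor $s(s/2+1) = \tfrac{1}{2}((1+s)^2 - 1)$ built into (I4) is precisely what cancels the quadratic contribution $(1+s)^2 q_\lambda(u)$ once the Nehari-Pankov identities are used. In this sense (I4) is engineered to make this one-shot comparison work, and the proof is essentially a direct unpacking once the right parameterization of $F_n(u)^+$ is in place; I do not anticipate a genuine obstacle beyond keeping the bookkeeping clean.
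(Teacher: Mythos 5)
Your proof is correct and follows essentially the same route as the paper: parameterize $w = (1+s)u + v$, use the Nehari-Pankov identity $\cJ_\lambda'(u)\big|_{F_n(u)} = 0$ to eliminate the linear terms, and reduce the comparison to the nonnegativity of the (I4) bracket plus the spectral-gap term $-\tfrac12 q_\lambda(v) \ge \tfrac{\lambda-\lambda_n}{2}\|v\|_H^2$. The paper phrases the step slightly more compactly by directly writing $\cJ_\lambda(w) - \cJ_\lambda(u) - \cJ_\lambda'(u)[s(s/2+1)u + (1+s)v]$ and noting the last term vanishes, rather than expanding $q_\lambda$ first and substituting, but this is the same calculation.
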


  \begin{proof}
  Let $w \in F_n(u)^+$, which can be written as $w=(1+s)u+v$ with some $v \in E_n$
    and some $s > -1$. Since also $s(s/2+1)u+(1+s)v \in F_n(u)$, it follows from $u \in \cNP_\lambda$ that
    \begin{equation*}
        \cJ_\lambda'(u)[s(s/2+1)u+(1+s)v]=0
    \end{equation*}
    and therefore 
  \begin{align*}
    \cJ_\lambda(w)-\cJ_\lambda(u)
    &= \cJ_\lambda((1+s)u+v)-\cJ_\lambda(u) -  \cJ_\lambda'(u)[s(s/2+1)u+(1+s)v] \\
    &= \frac{1}{2}q_\lambda(v) + \Big[\,I(u)-I((1+s)u+v) +  I'(u)[s(s/2+1)u+(1+s)v]\,\Big]\\ 
    &\le \frac{\lambda_n-\lambda}{2}\|v\|_H^2 + \Big[\,I(u)-I((1+s)u+v) +  I'(u)[s(s/2+1)u+(1+s)v]\,\Big]. 
  \end{align*}
   By $(I4)$ and since $\lambda_n < \lambda$, both terms on the right-hand side are nonpositive, which yields $\cJ_\lambda(w)\leq\cJ_\lambda(u)$.   
   Moreover, if $\cJ_\lambda(w)=\cJ_\lambda(u)$ holds,
   then we necessarily have $\|v\|_H=0$ and thus $v=0$,
   so our assumption about the strict inequality in $(I4)$
   implies $s =0$, i.e.\,$w=u$.
   So $u$ is the unique global maximizer, as claimed.
 \end{proof}

 \begin{cor}
   \label{existence-and-uniqueness}
   For every $\tilde u\in E \setminus E_n$,
   there exists a unique maximizer $u$
   of the restriction of $\cJ_\lambda$ to $F_n(\tilde u)^+$,
   and $u$ belongs to $\cNP_\lambda$.
   In particular, this implies that
   $$
   \inf_{\cNP_\lambda} \cJ_\lambda
   = \inf_{\tilde u \in E \setminus E_n}
   \sup_{F_n(\tilde u)^+} \cJ_\lambda.
   $$
 \end{cor}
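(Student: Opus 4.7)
The plan is to first establish existence of a maximizer of $\cJ_\lambda$ on $F_n(\tilde u)^+$ via a compactness argument in the finite-dimensional subspace $F_n(\tilde u)$, then show the maximizer lies in $\cNP_\lambda$ by a Fermat-type first-order condition, and finally obtain uniqueness directly from Proposition~\ref{prop:criterionB2I}. The min-max identity then follows by a standard double inequality.

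First, I would fix $\tilde u \in E \setminus E_n$ and observe that $F_n(\tilde u) = E_n \oplus \R \tilde u$ has dimension $n+1$, so all norms on it are equivalent and its closed bounded subsets are compact. The restriction of $\cJ_\lambda$ to $F_n(\tilde u)$ is continuous (this uses the continuity of $q_\lambda$ and, via $(I1)$ applied in a finite-dimensional setting where weak convergence coincides with norm convergence, the continuity of $I$). Applying Lemma~\ref{B-R-Lemma} to the singleton $I = \{\lambda\}$, there exists $R>0$ such that $\cJ_\lambda \le 0$ on $F_n(\tilde u)^+ \setminus B_R(0)$, while~(\ref{sup-positive}) yields $\sup_{F_n(\tilde u)^+} \cJ_\lambda >0$. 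Thus the supremum is attained on the compact set $\overline{F_n(\tilde u)^+} \cap \overline{B_R(0)}$ by some $u$ with $\cJ_\lambda(u)>0$. Since $\cJ_\lambda \le 0$ on the relative boundary $E_n$ of $F_n(\tilde u)^+$ in $F_n(\tilde u)$ by~(\ref{E_n-nonpositive}), we conclude $u$ lies in the relative interior $F_n(\tilde u)^+$.

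Second, I would deduce that $u\in \cNP_\lambda$. Since $u$ is an interior maximizer of $\cJ_\lambda$ restricted to the (relatively) open subset $F_n(\tilde u)^+$ of the finite-dimensional space $F_n(\tilde u)$, the Fermat condition gives $\cJ_\lambda'(u)\big|_{F_n(\tilde u)} = 0$. By~(\ref{eq:universal-prop-F(u)}) one has $F_n(u) = F_n(\tilde u)$, hence $\cJ_\lambda'(u)\big|_{F_n(u)} = 0$; combined with $u \notin E_n$, this means $u \in \cNP_\lambda$. Uniqueness is then immediate: any other maximizer would equal $u$ by Proposition~\ref{prop:criterionB2I} applied on $F_n(u)^+ = F_n(\tilde u)^+$, which identifies $u$ as the unique strict global maximizer there.

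For the min-max identity, I would argue by two inequalities. For every $\tilde u \in E \setminus E_n$ the maximizer $u$ constructed above lies in $\cNP_\lambda$, so $\sup_{F_n(\tilde u)^+}\cJ_\lambda = \cJ_\lambda(u) \ge \inf_{\cNP_\lambda}\cJ_\lambda$; taking the infimum over $\tilde u$ yields one direction. Conversely, any $u \in \cNP_\lambda$ satisfies $u\in E\setminus E_n$ and, by Proposition~\ref{prop:criterionB2I}, $\cJ_\lambda(u) = \sup_{F_n(u)^+}\cJ_\lambda \ge \inf_{\tilde u \in E\setminus E_n}\sup_{F_n(\tilde u)^+}\cJ_\lambda$, giving the reverse inequality after taking the infimum over $u\in\cNP_\lambda$. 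There is no serious obstacle here since all the heavy lifting has been done in Lemma~\ref{B-R-Lemma} and Proposition~\ref{prop:criterionB2I}; the only subtle point is to certify that the maximizer lies in the relative interior of $F_n(\tilde u)^+$ before invoking the first-order condition, which is ensured by combining~(\ref{sup-positive}) with~(\ref{E_n-nonpositive}).
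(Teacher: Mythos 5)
Your proof is correct and takes essentially the same approach as the paper: you use Lemma~\ref{B-R-Lemma} with $I=\{\lambda\}$ together with \eqref{sup-positive} and \eqref{E_n-nonpositive} to extract a maximizer in the relative interior of $F_n(\tilde u)^+$ by compactness, then invoke the Fermat condition and Proposition~\ref{prop:criterionB2I}. You additionally spell out the double-inequality argument for the inf-sup identity, which the paper treats as immediate from the first assertion.
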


 \begin{proof}
   Let $\tilde u\in E \setminus E_n$, let $R>0$ be given by Lemma~\ref{B-R-Lemma} for the compact set $I = \{\lambda\}$, and let $C \subset F_n(\tilde u)$ denote the closure of $F_n(\tilde u)^+ \cap B_R$ in $F_n(\tilde u)$. Then we have
   $$
   0 < \sup_{F_n(\tilde u)^+}\cJ_\lambda = \sup_{F_n(\tilde u)^+\cap B_R}\cJ_\lambda \le \sup_{C}\cJ_\lambda, 
   $$
by (\ref{sup-positive}) and Lemma~\ref{B-R-Lemma}. Since the set $C$ is compact, $\cJ_\lambda$ attains its maximum on $C$ at a point $u$. Since the relative boundary of $C$ in $F_n(\tilde u)$ is contained in $E_n \cup (F_n(\tilde u)^+ \cap \partial B_R)$ and
   $\cJ_\lambda \le 0$ on this set by (\ref{E_n-nonpositive}) and Lemma~\ref{B-R-Lemma}, it follows that $u$ is contained in the relative interior of $F_n(\tilde u)^+\cap B_R$ in $F_n(\tilde u)$. Hence $u \in F_n(\tilde u)^+$, and $u$ is a maximizer of the restriction of $\cJ_\lambda$ to $F_n(\tilde u)^+$. We therefore have $\cJ_\lambda'(u) \Big|_{F_n(\tilde u)} \equiv 0$, i.e.\,$u \in \cNP_\lambda$.
Since $F_n(\tilde u)^+= F_n(u)^+$
by (\ref{eq:universal-prop-F(u)}),
the uniqueness of $u$ now follows
from Proposition~\ref{prop:criterionB2I}.
 \end{proof}

Next we state a weak compactness property in a general
$\lambda$-dependent form which will be needed
in the next section. 
\begin{lem}
  \label{general-compactness-cond}
  Let $(\lambda^k)^k \subset (\lambda_n,\lambda_{n+1})$ and $(w_k)_k \subset E_n^\perp$ be sequences with $\|w_k\|_E=1$ for all $k$, $\lambda^k \to \lambda \in (\lambda_n,\lambda_{n+1})$ as $k \to \infty$ and
  \begin{equation}
    \label{eq:bounded-action-assumption}
   \sup_{k \in \N} \sup_{F_n(w_k)^+}\cJ_{\lambda^k} < \infty. 
  \end{equation}
Then, passing to a subsequence, we have $w_k \weak w$ in $E$ as $k \to \infty$ for some
$w \in E_n^\perp \setminus \{ 0 \}$, and 
  $$
  \sup_{F_n(w)^+}\cJ_\lambda  \le \liminf_{k \to \infty}\sup_{F_n(w_k)^+}\cJ_{\lambda^k}.
  $$
\end{lem}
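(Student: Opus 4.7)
The plan has three stages: extract a weakly convergent subsequence and identify the limit as lying in $E_n^\perp$, show the limit is nonzero (this is the crux), and then establish the liminf inequality by constructing suitable test elements in each $F_n(w_k)^+$.

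\textbf{Stage 1 (weak extraction).} Since $\|w_k\|_E = 1$ and $E$ is a separable Hilbert space, passing to a subsequence I may assume $w_k \weak w$ in $E$. Because $A$ has compact resolvent, the inclusion $E = D(A^{1/2}) \hookrightarrow H$ is compact, so $w_k \to w$ in $H$. The subspace $E_n^\perp$ is closed and convex in $E$, hence weakly closed, so $w \in E_n^\perp$.

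\textbf{Stage 2 (nontriviality of $w$).} Suppose for contradiction that $w = 0$. Then $\|w_k\|_H \to 0$, hence $\|A^{1/2} w_k\|_H^2 = \|w_k\|_E^2 - \|w_k\|_H^2 \to 1$, which gives $q_{\lambda^k}(w_k) \to 1$. For any fixed $t > 0$ we have $t w_k \weak 0$ in $E$, so by $(I1)$ we get $I(t w_k) \to 0$, and therefore
\[
\cJ_{\lambda^k}(t w_k) = \tfrac{t^2}{2} q_{\lambda^k}(w_k) - I(t w_k) \;\longrightarrow\; \tfrac{t^2}{2}.
\]
Since $t w_k \in F_n(w_k)^+$, we obtain $\liminf_{k \to \infty} \sup_{F_n(w_k)^+} \cJ_{\lambda^k} \ge t^2/2$ for every $t > 0$, which contradicts (\ref{eq:bounded-action-assumption}). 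Hence $w \neq 0$ and in particular $w \in E \setminus E_n$, so $F_n(w)^+$ is well-defined.

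\textbf{Stage 3 (the liminf inequality).} Fix an arbitrary $u \in F_n(w)^+$, written as $u = s w + v$ with $s > 0$ and $v \in E_n$. Define the natural comparison sequence $u_k := s w_k + v \in F_n(w_k)^+$. Setting $e_k := w_k - w$, we have $e_k \weak 0$ in $E$ and $e_k \to 0$ in $H$, and $u_k = u + s e_k \weak u$ in $E$. By $(I1)$, $I(u_k) \to I(u)$. Expanding the bilinear form,
\[
q_{\lambda^k}(u_k) = q_{\lambda^k}(u) + 2 s \, q_{\lambda^k}(u, e_k) + s^2 q_{\lambda^k}(e_k),
\]
where $q_{\lambda^k}(u) \to q_\lambda(u)$ by $\lambda^k \to \lambda$, $q_{\lambda^k}(u, e_k) \to 0$ since $A^{1/2} e_k \weak 0$ in $H$ and $e_k \to 0$ in $H$, and $\liminf_k q_{\lambda^k}(e_k) \ge 0$ because $\|A^{1/2} e_k\|_H^2 \ge 0$ and $\lambda^k \|e_k\|_H^2 \to 0$. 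Combining these,
\[
\liminf_{k \to \infty} \cJ_{\lambda^k}(u_k) \;\ge\; \cJ_\lambda(u),
\]
which yields $\cJ_\lambda(u) \le \liminf_k \sup_{F_n(w_k)^+} \cJ_{\lambda^k}$. Taking the supremum over $u \in F_n(w)^+$ completes the proof.

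\textbf{Main obstacle.} The only subtle step is Stage 2. The boundedness assumption on the action must be leveraged to prevent the $E$-mass of $w_k$ from entirely escaping into the gradient part; otherwise, the energy $\cJ_{\lambda^k}$ restricted to the ray through $w_k$ would blow up because the nonlinear term $I$ vanishes in the weak limit (by $(I1)$) while the quadratic form along $w_k$ stays bounded away from zero (using $w_k \in E_n^\perp$ and the spectral gap condition $\lambda^k < \lambda_{n+1}$ only mildly, since we have $\|w_k\|_E = 1$ to control the quadratic part directly). Stages 1 and 3 then follow by standard weak-convergence arguments combined with the strong continuity $(I1)$.
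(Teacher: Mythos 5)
Your proof is correct and follows essentially the same strategy as the paper's (testing $t w_k$ in Stage~2 and $s w_k+v$ in Stage~3, each paired with the strong continuity from $(I1)$ and the compact embedding $E\hookrightarrow H$). Two minor streamlinings over the paper's version: in Stage~2 you compute $q_{\lambda^k}(w_k)\to 1$ directly from $\|w_k\|_E=1$ rather than invoking the coercivity $q_\lambda(\tilde w)\ge c\|\tilde w\|_E^2$ on $E_n^\perp$, and in Stage~3 you establish $\cJ_\lambda(u)\le \liminf_k\sup_{F_n(w_k)^+}\cJ_{\lambda^k}$ pointwise for each fixed $u$, which makes the paper's appeal to Lemma~\ref{B-R-Lemma} (used there to restrict to a ball $B_R$ and get a uniform bound on $\|v\|_H$) unnecessary.
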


\begin{proof}
  Since $(w_k)_k$ is bounded in $E$, we may pass to a subsequence with $w_k \weak w$ in $E$ for some $w \in E_n^\perp$. Suppose by contradiction that $w =0$, which implies that for every $t>0$ we have $t w_k \weak 0$ as $k \to \infty$. Therefore, by (I1) and (I2), 
  \begin{equation*}
      \sup_{F_n(w_k)^+}\cJ_{\lambda^k} \ge \cJ_{\lambda^k}(tw_k) = \frac{t^2}{2}q_{\lambda^k}(w_k)- I(t w_k) = \frac{t^2}{2}q_{\lambda}(w_k)+(\lambda^k - \lambda)\|w_k\|_H^2 + o(1) = \frac{t^2}{2}q_{\lambda}(w_k)+ o(1) 
  \end{equation*}
  as $k \to \infty$. Moreover, there exists a constant $c>0$ with
  $q_\lambda(\tilde w) \ge c  \|\tilde w\|_E^2$ for every $\tilde w \in E_n^\perp$. Hence,
  \begin{equation*}
      \liminf_{k \to \infty}   \sup_{F_n(w_k)^+}\cJ_{\lambda^k}\ge \frac{ct^2}{2} \qquad \text{for every $t>0$,}
  \end{equation*}
  which contradicts (\ref{eq:bounded-action-assumption}). Thus, $w \ne 0$.

  Next, by Lemma~\ref{B-R-Lemma}, choose $R>0$ with $\cJ_\lambda(u) \le 0$ for every
  $u \in F_n(w)^+ \setminus B_R$. For every $u = tw + v \in F_n(w)^+ \cap B_R$, we then have $\|v\|_H\le \|tw +v\|_H \le \|tw + v \|_E \le R$ and therefore, by the strong continuity of $I$ as assumed in $(I1)$, 
  \begin{align*}
    \cJ_\lambda(tw + v) &=\tfrac{t^2}{2}q_\lambda(w)+q_\lambda(v)- I(tw+v)\\
    &\le \liminf_{k \to \infty}\Bigl[ \tfrac{t^2}{2}q_\lambda(w_k)+q_\lambda(v)- I(tw_k+v)\Bigr]\\
    &\le \liminf_{k \to \infty}\Bigl[ \tfrac{t^2}{2}q_{\lambda^k}(w_k)+q_{\lambda^k}(v)- I(tw_k+v)+ |\lambda^k-\lambda|\bigl(\tfrac{t^2}{2}\|w_k\|_H^2 + \|v\|_H^2\bigr)\Bigr]\\
                        &\le \liminf_{k \to \infty}\Bigl[ \cJ_{\lambda^k}(tw_k+v)+ |\lambda^k-\lambda|\bigl(\tfrac{t^2}{2} + R^2\bigr)\Bigr]\\
    &\le \liminf_{k \to \infty} \sup_{F_n(w_k)^+}\cJ_{\lambda^k}.
  \end{align*}
  We thus conclude that
  \begin{equation*}
      \sup_{F_n(w)^+}\cJ_\lambda = \sup_{F_n(w)^+ \cap B_R}\cJ_\lambda \le \liminf_{k \to \infty} \sup_{F_n(w_k)^+}\cJ_{\lambda^k},
  \end{equation*}
  as claimed.
\end{proof}

\begin{lem}
  \label{c-lambda-attained}
The value $c_\lambda$ in (\ref{eq:inf-sup-characterization}) is attained and positive.
\end{lem}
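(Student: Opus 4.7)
The plan is to first reduce attainment of the infimum to an application of the weak compactness provided by Lemma~\ref{general-compactness-cond}, and then to deduce strict positivity of $c_\lambda$ as an essentially automatic consequence of the Nehari--Pankov identity (\ref{conseq_I4}). Since $\cNP_\lambda$ is non-empty (Corollary~\ref{existence-and-uniqueness}), $c_\lambda$ is a well-defined real number, and (\ref{conseq_I4}) already shows $c_\lambda \ge 0$; the task is to upgrade this to attainment and strict positivity.

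First I would pick a minimizing sequence $(u_k)_k \subset \cNP_\lambda$ with $\cJ_\lambda(u_k) \to c_\lambda$. Each $u_k$ admits a unique decomposition $u_k = t_k w_k + v_k$ with $t_k > 0$, $v_k \in E_n$ and $w_k \in E_n^\perp$ normalized so that $\|w_k\|_E = 1$. Using (\ref{eq:universal-prop-F(u)}) together with Proposition~\ref{prop:criterionB2I}, one checks that
$$\sup_{F_n(w_k)^+} \cJ_\lambda \;=\; \sup_{F_n(u_k)^+} \cJ_\lambda \;=\; \cJ_\lambda(u_k) \;\longrightarrow\; c_\lambda,$$
so this supremum is uniformly bounded above in $k$.

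Next, I would apply Lemma~\ref{general-compactness-cond} with the constant sequence $\lambda^k \equiv \lambda$: up to a subsequence, it yields $w_k \weak w$ in $E$ with $w \in E_n^\perp \setminus \{0\}$ and
$$\sup_{F_n(w)^+} \cJ_\lambda \;\le\; \liminf_{k \to \infty} \sup_{F_n(w_k)^+} \cJ_\lambda \;=\; c_\lambda.$$
Since $w \in E \setminus E_n$, Corollary~\ref{existence-and-uniqueness} produces a maximizer $u^\star \in \cNP_\lambda$ of $\cJ_\lambda$ on $F_n(w)^+$, and by definition of $c_\lambda$ as an infimum over $\cNP_\lambda$ we must have $\cJ_\lambda(u^\star) = c_\lambda$, proving attainment.

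Finally, positivity is automatic from the Nehari--Pankov structure: since $u^\star \in \cNP_\lambda$ satisfies $\cJ_\lambda'(u^\star)[u^\star] = 0$ (as $u^\star \in F_n(u^\star)$), one rewrites
$$c_\lambda \;=\; \cJ_\lambda(u^\star) - \tfrac{1}{2}\cJ_\lambda'(u^\star)[u^\star] \;=\; \tfrac{1}{2}I'(u^\star)[u^\star] - I(u^\star) \;>\; 0$$
by (\ref{conseq_I4}) applied to the nonzero element $u^\star$. The only step where anything could go wrong is the nonvanishing of the weak limit $w$, but this is precisely what Lemma~\ref{general-compactness-cond} is designed to deliver, so no genuine obstacle arises; the lemma thus plays a double role of guaranteeing both a usable limit and that the minimization level is not washed out in the weak topology.
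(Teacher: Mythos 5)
Your proof is correct and follows essentially the same route as the paper's: normalize the $E_n^\perp$-component of a minimizing sequence, invoke Lemma~\ref{general-compactness-cond} with $\lambda^k\equiv\lambda$ to get a nonzero weak limit $w$ with $\sup_{F_n(w)^+}\cJ_\lambda\le c_\lambda$, then use Corollary~\ref{existence-and-uniqueness} to extract the minimizer. The only small deviation is in the positivity step, where the paper simply invokes \eqref{sup-positive} once attainment is in hand, while you rederive positivity via $c_\lambda=\tfrac12 I'(u^\star)[u^\star]-I(u^\star)>0$ from \eqref{conseq_I4} and the fact that $u^\star\in\cNP_\lambda$ gives $\cJ_\lambda'(u^\star)[u^\star]=0$; both arguments are correct and equivalent in spirit, yours being slightly more explicit about the underlying mechanism.
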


\begin{proof}
  Let $(u_k)_k$ be a minimizing sequence for $\cJ_\lambda$
  in $\cNP_\lambda$. Consider
$\Pi_n: E \to E_n$ the $E$-orthogonal projection onto $E_n$
and define
$w_k := \frac{u_k - \Pi_n u_k}{\|u_k - \Pi_n u_k\|_E}$.
Then, $w_k$ belongs to $E_n^\perp$ and satisfies
$\| w_k \|_E = 1$ and
$F_n(w_k)^+ = F_n(u_k)^+$, so that,
recalling Proposition \ref{prop:criterionB2I},
  \begin{equation*}
   \sup_{F_n(w_k)^+}\cJ_{\lambda}= \sup_{F_n(u_k)^+}\cJ_\lambda = \cJ_\lambda(u_k) \to c_\lambda  \qquad \text{as $k \to \infty$,} 
  \end{equation*}
so \eqref{eq:bounded-action-assumption} holds with $\lambda = \lambda^k$ for all $k$. By Lemma~\ref{general-compactness-cond}, we may therefore pass to a subsequence with the property that $w_k \weak w$ in $E$ as $k \to \infty$ for some
$w \in E_n^\perp \setminus \{ 0 \}$, and 
  $$
  \sup_{F_n(w)^+}\cJ_\lambda  \le \liminf_{k \to \infty}\sup_{F_n(w_k)^+}\cJ_{\lambda}=
  \lim_{k \to \infty}\cJ_{\lambda}(u_k) = c_\lambda.
  $$
  Let $u \in F_n(w)^+$ be the unique maximizer of $\cJ_\lambda$ on $F_n(w)^+$. Then $u \in \cNP_\lambda$, and $\cJ_\lambda(u) \le c_\lambda$. Therefore equality must hold by the definition of $c_\lambda$, and thus $u$ is a minimizer of $\cJ_\lambda$ on $\cNP_\lambda$. The positivity of $c_\lambda$ now follows a posteriori from (\ref{sup-positive}).
\end{proof}

To finish the proof of Theorem~\ref{main-abstract-result-existence-lambda}, it remains to show that every minimizer of $\cJ_\lambda$ on $\cNP_\lambda$ is a critical point of $\cJ_\lambda$. This will be proved with the help of the minimax characterization of $c_\lambda$ in (\ref{eq:inf-sup-characterization}) and the following lemma.

 \begin{lem}
\label{m-lambda-continuous}   
Let $\lambda$ be fixed and,
for every $w \in E \setminus E_n$,
let $u:= m_\lambda(w)$ be the unique global maximizer
of $\cJ_\lambda$ on $F_n(w)^+$.
Then the map $m_\lambda: E \setminus E_n \to \cNP_\lambda$
is continuous.    
 \end{lem}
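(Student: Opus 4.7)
The plan is to prove sequential continuity: given $w_k \to w$ in $E \setminus E_n$, I will show that $u_k := m_\lambda(w_k)$ converges to $u := m_\lambda(w)$ in $E$. Let $\Pi_n: E \to E_n$ denote the $E$-orthogonal projection, and set $\tilde w_k := w_k - \Pi_n w_k \in E_n^\perp$, which converges in $E$ to $\tilde w := w - \Pi_n w \ne 0$. Since $F_n(w_k)^+ = E_n \oplus \R^+ \tilde w_k$, I can write uniquely $u_k = t_k \tilde w_k + v_k$ with $t_k > 0$ and $v_k \in E_n$.

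The first step is to obtain a lower bound $\liminf_k \cJ_\lambda(u_k) \ge \cJ_\lambda(u)$. For any $\tilde u = tw + v \in F_n(w)^+$ (so $t > 0$, $v \in E_n$), the element $tw_k + v$ lies in $F_n(w_k)^+$ and converges in $E$ to $\tilde u$, so continuity of $\cJ_\lambda$ and the maximizing property of $u_k$ yield
$$\liminf_{k \to \infty} \cJ_\lambda(u_k) \ge \lim_{k \to \infty} \cJ_\lambda(tw_k + v) = \cJ_\lambda(\tilde u).$$
Taking the supremum over $\tilde u \in F_n(w)^+$ gives $\liminf_k \cJ_\lambda(u_k) \ge \cJ_\lambda(u) > 0$.

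The next step is to establish boundedness of $(u_k)$ in $E$ by adapting Lemma~\ref{B-R-Lemma} to the moving subspaces $F_n(w_k)$. Suppose, for contradiction, that $\|u_k\|_E \to \infty$ along a subsequence. Since $\tilde w_k \to \tilde w \ne 0$, the $E$- and $H$-norms on $F_n(w_k)$ are uniformly equivalent for large $k$, so $s_k := \|u_k\|_H \to \infty$ as well. Normalizing $\hat u_k := u_k/s_k = \hat t_k \tilde w_k + \hat v_k$, the condition $\|\hat u_k\|_H = 1$ forces $\hat t_k$ and $\|\hat v_k\|_H$ to be bounded, so a subsequence converges strongly in $E$ to some $\hat u \in F_n(w)$ with $\|\hat u\|_H = 1$, hence $\hat u \ne 0$. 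By (I3), I pick $t_* > 0$ with $I(t_*\hat u)/t_*^2 > q_{\lambda_n}(\hat u)/2$. Then, following exactly the computation in Lemma~\ref{B-R-Lemma}, the monotonicity of $t \mapsto I(t\hat u_k)/t^2$ for $s_k \ge t_*$ together with the strong continuity of $I$ and continuity of $q_{\lambda_n}$ gives
$$\frac{\cJ_\lambda(u_k)}{s_k^2} \le \frac{q_{\lambda_n}(\hat u_k)}{2} - \frac{I(t_*\hat u_k)}{t_*^2} \longrightarrow \frac{q_{\lambda_n}(\hat u)}{2} - \frac{I(t_*\hat u)}{t_*^2} < 0,$$
so $\cJ_\lambda(u_k) \to -\infty$, contradicting the lower bound above.

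Once boundedness is in hand, $u_k - \Pi_n u_k = t_k \tilde w_k$ is bounded in $E$ while $\|\tilde w_k\|_E \to \|\tilde w\|_E > 0$ forces $(t_k)$ to be bounded, and then $(v_k)$ is bounded in the finite-dimensional space $E_n$. Extracting a subsequence with $t_k \to t^* \ge 0$ and $v_k \to v^*$, I obtain $u_k \to u^* := t^*w + (v^* - t^*\Pi_n w) \in \overline{F_n(w)^+}$ strongly in $E$. Continuity of $\cJ_\lambda$ and the lower bound give $\cJ_\lambda(u^*) \ge \cJ_\lambda(u) > 0$, which rules out $t^* = 0$ (otherwise $u^* \in E_n$ and $\cJ_\lambda(u^*) \le 0$ by \eqref{E_n-nonpositive}), so $u^* \in F_n(w)^+$. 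The uniqueness of the maximizer of $\cJ_\lambda$ on $F_n(w)^+$ from Proposition~\ref{prop:criterionB2I} then forces $u^* = u$, and since every subsequence of $(u_k)$ admits a further subsequence converging to $u$, the full sequence converges. I expect the main obstacle to be the boundedness step: Lemma~\ref{B-R-Lemma} is stated only for a fixed test vector, so one must show that the ``ball of positivity'' of $\cJ_\lambda$ in $F_n(w_k)^+$ stays uniformly bounded as $w_k \to w$, which is achieved here by recycling the superlinearity argument of the lemma while exploiting the strong convergence $\tilde w_k \to \tilde w \ne 0$.
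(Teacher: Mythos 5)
Your proof is correct and follows essentially the same strategy as the paper's: show boundedness of $(u_k)$ by contradiction using the superlinearity in (I3), extract a strongly convergent subsequence, rule out the limit lying in $E_n$ via positivity of the action, and conclude by the uniqueness of the maximizer from Proposition~\ref{prop:criterionB2I}. The cosmetic differences (establishing the $\liminf$ lower bound as an explicit first step, normalizing by $\|\cdot\|_H$ rather than $\|\cdot\|_E$, and keeping the decomposition $\tilde w_k = w_k - \Pi_n w_k$ in place of the paper's WLOG reduction to unit vectors in $E_n^\perp$) do not change the substance of the argument.
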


 \begin{proof}
   Let $w \in E \setminus E_n$, and let $(w_k)_k$ be a sequence in $E \setminus E_n$ with $w_k \to w$. By a standard argument, it suffices to show that
   \begin{equation}
     \label{eq:subsequence-sufficient}
    m_\lambda(w_k) \to m_\lambda(w) \qquad \text{after passing to a subsequence.} 
   \end{equation}
   Without loss of generality, we may assume that
   $w, w_k \in E_n^\perp$, and that $\|w\|_E = 1$,
   $\|w_k\|_E = 1$ for all $k$. We write 
   $$
   u_k:= m_\lambda(w_k) = s_k w_k + v_k
   $$
   with $s_k \in (0,\infty)$ and $v_k \in E_n$. We first claim that
   $u_k$ remains bounded in $E$. If not, we may pass to a subsequence with $\|u_k\|_E \to \infty$. Putting
   $$
   \tilde u_k = \frac{u_k}{\|u_k\|_E}= \frac{s_k}{\|u_k\|_E}w_k + \frac{v_k}{\|u_k\|_E},
   $$
   we note that $\frac{s_k}{\|u_k\|_E}$ remains bounded in $\R$, while $\frac{v_k}{\|u_k\|_E} \in E_n$ remains bounded in $E$. Thus, after passing to a subsequence, 
   there exist $s_* \ge 0$ and $v \in E_n$ with
   $$
   \tilde u_k \to \tilde u := s_* w + v.
   $$
   Moreover, there exists $t>0$ with $\frac{I(t \tilde u)}{t^2} > \frac{q_\lambda(\tilde u)}{2}$.
   Consequently, for $k$ large,
   \begin{align*}
   0 \le   \cJ_\lambda(u_k)&= \|u_k\|_E^2\Bigl(\frac{q_\lambda(\tilde u_k)}{2}- \frac{I(\|u_k\|_E \tilde u_k)}{\|u_k\|_E^2}\Bigr)\\
& \le \|u_k\|_E^2\Bigl(\frac{q_\lambda(\tilde u_k)}{2}- \frac{I(t \tilde u_k)}{t^2}\Bigr)\\     
& = \|u_k\|_E^2\Bigl(\frac{q_\lambda(\tilde u)}{2}- \frac{I(t \tilde u)}{t^2} +o(1)\Bigr)\\     
& \to -\infty.  
   \end{align*}
   This is a contradiction, and thus $(u_k)_k$ remains bounded in $E$. We may thus pass to a subsequence with $u_k \to u:= s_* w + v$, where $s_k \to s_*$ and $v_k \to v$ as $k \to \infty$. We note that $s_* \ne 0$, since otherwise $u \in E_n$ and therefore
   $$
   0 < c_\lambda
   \le \lim_{k \to \infty} \cJ_\lambda(u_k)
   = \cJ_\lambda(u)
   \le 0,
   $$
   a contradiction. Hence $u \in E \setminus E_n$. Moreover, for every $s >0$ and $\tilde v \in E_n$, we have
   \begin{align*}
       \cJ_\lambda(s u + \tilde v)
       = \lim_{k \to \infty} \cJ_\lambda(s u_k + \tilde v)
       \le \lim_{k \to \infty} \cJ_\lambda(u_k)
       = \cJ_\lambda(u).
   \end{align*}
    Consequently, $u$ maximizes $\cJ_\lambda$
    on $F_n(u)^+=F_n(w)^+$, which implies that
   $$
   m_\lambda(w)= u=\lim_{k \to \infty}u_k = \lim_{k \to \infty}m_\lambda(w_k).
   $$
   Hence, \eqref{eq:subsequence-sufficient} holds.
 \end{proof}

 \begin{lem}
\label{minimizer=criticalpoint}   
Let $u \in \cNP_\lambda$ be a minimizer of $\cJ_\lambda$ on $\cNP_\lambda$. Then $u$ is a critical point of $\cJ_\lambda$.
\end{lem}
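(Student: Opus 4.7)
Suppose for contradiction that $u \in \cNP_\lambda$ is a minimizer of $\cJ_\lambda$ on $\cNP_\lambda$ with $\cJ_\lambda'(u) \neq 0$. Since $u \in \cNP_\lambda$ forces $\cJ_\lambda'(u)|_{F_n(u)} \equiv 0$, one can choose $\xi \in E \setminus F_n(u)$ with $\cJ_\lambda'(u)\xi = -2\alpha$ for some $\alpha > 0$; by continuity of $\cJ_\lambda'$, the bound $\cJ_\lambda'(w)\xi \le -\alpha$ persists on some $E$-ball $B_\rho(u)$. The strategy is to contradict the infsup formula \eqref{eq:inf-sup-characterization} from Theorem~\ref{main-abstract-result-existence-lambda} by showing that, for $\epsilon > 0$ sufficiently small, the perturbed basepoint $u_\epsilon := u + \epsilon\xi \in E \setminus E_n$ satisfies $\sup_{F_n(u_\epsilon)^+} \cJ_\lambda < c_\lambda$.

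Any element of $F_n(u_\epsilon)^+$ can be written uniquely as $(su + v) + s\epsilon\xi$ with $s > 0$ and $v \in E_n$, so controlling the supremum amounts to controlling $\cJ_\lambda$ on $\epsilon$-perturbations of elements of $F_n(u)^+$. The fundamental theorem of calculus gives
\begin{equation*}
    \cJ_\lambda(su + v + s\epsilon\xi) - \cJ_\lambda(su + v)
    = s\epsilon \int_0^1 \cJ_\lambda'(su + v + t s\epsilon\xi)\xi \, \mathrm{d}t.
\end{equation*}
I would split the parameter set $\{s > 0,\, v \in E_n\}$ into three regions and estimate the perturbed value on each. Near $(s,v) = (1,0)$ (say $|s-1| + \|v\|_H \le r_0$ with $s \in [1/2, 2]$), the whole segment $su + v + t s\epsilon\xi$ stays inside $B_\rho(u)$ for $\epsilon$ small, so the integrand is $\le -\alpha$; combined with the bound $\cJ_\lambda(su + v) \le c_\lambda$ from Proposition~\ref{prop:criterionB2I}, this yields $\cJ_\lambda(su + v + s\epsilon\xi) \le c_\lambda - \alpha s\epsilon \le c_\lambda - \alpha\epsilon/2$. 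On a bounded region bounded away from $(1,0)$, the strict inequality in Proposition~\ref{prop:criterionB2I} combined with compactness (and $\cJ_\lambda \le 0$ on $E_n$) furnishes $\delta > 0$ with $\cJ_\lambda(su + v) \le c_\lambda - \delta$; the integral term is $O(\epsilon)$ by boundedness, so the bound $c_\lambda - \delta/2$ persists for $\epsilon$ small. On the unbounded complement, a strengthened version of Lemma~\ref{B-R-Lemma} with the basepoint ranging over the compact family $\{u_\epsilon : \epsilon \in [0, \epsilon_0]\}$ provides a uniform radius $R$ beyond which $\cJ_\lambda \le 0 < c_\lambda$. Patching the three regional bounds yields the desired strict inequality.

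The main technical obstacle is the uniform-in-$\epsilon$ control on the unbounded region, since Lemma~\ref{B-R-Lemma} as stated fixes a single basepoint while the half-spaces $F_n(u_\epsilon)^+$ genuinely move with $\epsilon$. Re-examination of the proof of Lemma~\ref{B-R-Lemma} shows, however, that the contradiction argument extends with only cosmetic changes when the basepoint varies over a compact subset of $E \setminus E_n$: the compactness of the unit sphere of $F_n(u)$ used there is replaced by compactness of the corresponding sphere bundle over the parameter, and the second superlinearity limit in \eqref{eq:I-limits} furnishes the required $t_* > 0$ uniformly. Once this uniform version is in place, the three-region estimate closes the contradiction argument and establishes that $u$ is a critical point of $\cJ_\lambda$.
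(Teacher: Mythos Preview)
Your argument is correct and takes a genuinely different route from the paper's. Instead of estimating $\cJ_\lambda$ on the whole half-space $F_n(u_\epsilon)^+$, the paper first establishes (Lemma~\ref{m-lambda-continuous}) that the map $m_\lambda$ sending each $w \in E \setminus E_n$ to the unique maximizer of $\cJ_\lambda$ on $F_n(w)^+$ is continuous. Choosing $w \in F_n(u)^\perp$ with $\cJ_\lambda'(u)w < -\eps$ and setting $w_k = u + \tfrac{1}{k}w$, continuity gives $u_k := m_\lambda(w_k) \to u$; writing $u_k = s_k u + \tfrac{s_k}{k}w + v_k$ with $s_k \to 1$, $v_k \to 0$, the segment from $s_k u + v_k$ to $u_k$ eventually lies in $B_\delta(u)$, so a single line integral yields $\cJ_\lambda(u_k) \le \cJ_\lambda(s_k u + v_k) - \tfrac{s_k}{k}\eps < c_\lambda$, contradicting minimality since $u_k \in \cNP_\lambda$. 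Your three-region decomposition bypasses Lemma~\ref{m-lambda-continuous} altogether, trading it for the mild extension of Lemma~\ref{B-R-Lemma} you describe---which is indeed painless here, since all the half-spaces $F_n(u_\epsilon)^+$ sit inside the single finite-dimensional space $E_n + \R u + \R \xi$, so the compactness you need is automatic. The paper's approach is cleaner once $m_\lambda$-continuity is in hand; yours is more self-contained and closer in spirit to classical Nehari-manifold arguments.
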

 
\begin{proof}
Suppose by contradiction that $\cJ_\lambda'(u) \ne 0$.
Then there exist $w \in F_n(u)^\perp$
and $\eps>0$ with $\cJ'(u)w< -\eps$.
Since $\cJ_\lambda$ is of class $C^1$,
this implies that there exists $\delta>0$ with 
\begin{equation}
  \label{eq:delta-eps-continuity-J-prime}
  \cJ_\lambda'(v)w < - \eps \qquad \text{for every $v \in B_\delta(u)$,}
\end{equation}
where $B_\delta(u)$ denotes the open $\delta$-ball in the $E$-norm centered at $u$. We let $w_k:= u+ \frac{1}{k} w$. By Lemma~\ref{m-lambda-continuous}, we then have that
  \begin{equation}
\label{eq:u_k-u-m-lambda}
u_k:= m_\lambda(w_k) \to m_\lambda(u)=u \qquad \text{as $k \to \infty$.}
  \end{equation}
We write
  \begin{equation}
      \label{decomp_u_k}
      u_k = s_k w_k + v_k = s_k u + \frac{s_k}{k} w + v_k
  \end{equation}
  with $s_k\in (0,\infty)$ and $v_k \in E_n$.
  We recall that $u \in F_n(u) \setminus E_n$
  and $w \in F_n(u)^\perp$.
  Projecting \eqref{decomp_u_k} on $F_n(u)$,
  we deduce that $s_k u + v_k$ converges to $u$
  in $F_n(u)$. Since $F_n(u)$ is finite dimensional,
  this implies that $v_k \to 0$
  and $s_k \to 1$ as $k \to \infty$.
  Consequently, for $k$ sufficiently large, we have
  $$
  s_k u + \frac{s s_k}{k}  w+ v_k \in B_\delta(u) \qquad \text{for $s \in (0,1)$,}
  $$
  which together with (\ref{eq:delta-eps-continuity-J-prime}) implies that
$$
  \cJ_\lambda(u)-\cJ_\lambda(u_k) \ge \cJ_\lambda(s_k u+v_k) -\cJ_\lambda(u_k)= - \frac{s_k}{k}
  \int_{0}^1\cJ'\Bigl(s_k u +\frac{ss_k}{k} w+ v_k\Bigr)w
  \intd s
\ge  \frac{s_k}{k} \eps > 0
$$
for $k$ sufficiently large, which contradicts the minimizing property of $u$. 
Hence $u$ is a critical point of $\cJ_\lambda$, as claimed.
\end{proof}

\section{The $\lambda$-dependence
of the minimization problem and normalized solutions}
\label{sec:lambda-depend-minim}

In this section, we study the $\lambda$-dependence
of the Nehari-Pankov solutions. Our main goal is to prove
Theorem~\ref{M-n-char}.

\begin{rem}{\rm 
  \label{sec:lambda-depend-minim-1}
Since $\cJ_\lambda(u) \le \cJ_\mu(u)$ for all $u \in E$ if $\mu \le \lambda$, the inf-sup characterization in (\ref{eq:inf-sup-characterization}) immediately implies that $c_\lambda$ is decreasing in $\lambda \in (\lambda_n,\lambda_{n+1})$.
}
\end{rem}

\subsection{Norms of Nehari-Pankov solutions}
In the following, we let
\begin{equation}
    \label{eq:def-Q-lambda}
    Q_\lambda
    := \{u \in \cNP_\lambda \mid \cJ_\lambda(u)
    = c_\lambda\},
\end{equation}
which is nonempty according
to Theorem~\ref{main-abstract-result-existence-lambda}.

\begin{lem}
\label{c-lambda-continuity-etc}
  The map $\lambda \mapsto c_\lambda$ is continuous in $(\lambda_n, \lambda_{n+1})$. Moreover, we have
  \begin{equation}
    \label{eq:limsup-inf-est}
  \limsup_{\eps \to 0^+}\frac{c_{\lambda-\eps}-c_{\lambda}}{\eps}
  \le \inf \biggl\{\frac{\|u\|_H^2}{2} \Bigm|
  u \in Q_\lambda\biggr\}
      \end{equation}
    and 
  \begin{equation}
    \label{eq:liminf-sup-est}
\sup \biggl\{\frac{\|u\|_H^2}{2}\Bigm| u \in Q_\lambda\biggr\}
\le   \liminf_{\eps \to 0^+}\frac{c_{\lambda}-c_{\lambda+\eps}}{\eps}. 
      \end{equation}
\end{lem}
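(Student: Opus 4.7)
The plan is to combine the elementary algebraic identity
\[
\cJ_\mu(w) - \cJ_\lambda(w) = \tfrac{\lambda - \mu}{2}\|w\|_H^2 \qquad (w \in E,\ \mu \in \R)
\]
with the inf-sup characterization of $c_\mu$ from Theorem~\ref{main-abstract-result-existence-lambda}, applied along trial functions $u \in Q_\lambda$. Fix $u \in Q_\lambda$ and, for $\mu$ in a small neighborhood of $\lambda$, let $u^\mu := m_\mu(u) \in F_n(u)^+$ be the unique maximizer of $\cJ_\mu$ on $F_n(u)^+$ provided by Corollary~\ref{existence-and-uniqueness}. By (\ref{eq:universal-prop-F(u)}) one has $F_n(u^\mu)^+ = F_n(u)^+$, hence $u^\mu \in \cNP_\mu$ and $c_\mu \le \cJ_\mu(u^\mu)$.

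Granting for a moment the convergence $u^{\lambda \pm \eps} \to u$ in $E$ as $\eps \to 0^+$, the two one-sided estimates follow from a one-line computation. Indeed,
\[
c_{\lambda+\eps} \le \cJ_{\lambda+\eps}(u^{\lambda+\eps}) = \cJ_\lambda(u^{\lambda+\eps}) - \tfrac{\eps}{2}\|u^{\lambda+\eps}\|_H^2 \le c_\lambda - \tfrac{\eps}{2}\|u^{\lambda+\eps}\|_H^2,
\]
where the last inequality uses that $u$ maximizes $\cJ_\lambda$ on $F_n(u)^+$ and that $u^{\lambda+\eps} \in F_n(u)^+$. This yields $\frac{c_\lambda - c_{\lambda+\eps}}{\eps} \ge \frac{\|u^{\lambda+\eps}\|_H^2}{2}$; passing to the liminf as $\eps \to 0^+$ and then taking the supremum over $u \in Q_\lambda$ gives~(\ref{eq:liminf-sup-est}). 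The symmetric computation with $\mu = \lambda - \eps$ produces $\frac{c_{\lambda-\eps} - c_\lambda}{\eps} \le \frac{\|u^{\lambda-\eps}\|_H^2}{2}$ and, after passing to the limsup and infimum over $u \in Q_\lambda$, establishes~(\ref{eq:limsup-inf-est}).

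The main obstacle is therefore the continuity claim $u^\mu \to u$ as $\mu \to \lambda$, which is a parameter-continuity analogue of Lemma~\ref{m-lambda-continuous}. I will apply Lemma~\ref{B-R-Lemma} to a compact interval $I \subset (\lambda_n, \lambda_{n+1})$ containing a neighborhood of $\lambda$ to obtain a radius $R > 0$ such that $\cJ_\mu \le 0$ on $F_n(u)^+ \setminus B_R(0)$ for all $\mu \in I$. Since $\cJ_\mu(u^\mu) \ge \cJ_\mu(u) \to c_\lambda > 0$ as $\mu \to \lambda$, each $u^\mu$ must lie in the compact set $F_n(u)^+ \cap \overline{B_R(0)}$. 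Writing $u^{\mu_k} = s_k u + v_k$ along an arbitrary sequence $\mu_k \to \lambda$ and extracting a convergent subsequence $u^{\mu_k} \to u_* = s_* u + v_*$ in $E$, I will rule out $s_* = 0$: otherwise $u_* \in E_n$ would give $\cJ_\lambda(u_*) \le 0$, while (I1) together with $\mu_k \to \lambda$ gives $\cJ_\lambda(u_*) = \lim_k \cJ_{\mu_k}(u^{\mu_k}) \ge \lim_k \cJ_{\mu_k}(u) = c_\lambda > 0$. Hence $u_* \in F_n(u)^+$; passing to the limit in $\cJ_{\mu_k}(u^{\mu_k}) \ge \cJ_{\mu_k}(w)$ for arbitrary $w \in F_n(u)^+$ shows that $u_*$ maximizes $\cJ_\lambda$ on $F_n(u)^+ = F_n(u_*)^+$. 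Since $u_* \in \cNP_\lambda$, the uniqueness assertion of Proposition~\ref{prop:criterionB2I} forces $u_* = u$, and the subsequence principle upgrades this to the full convergence $u^\mu \to u$.

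Continuity of $\lambda \mapsto c_\lambda$ will come out as a byproduct of these estimates. The uniform bound $\|u^{\lambda \pm \eps}\|_E \le R$ for $\eps$ small, combined with the two inequalities displayed above, gives $|c_{\lambda \pm \eps} - c_\lambda| \le C\eps$ for some constant $C>0$ and all small $\eps > 0$, proving (two-sided Lipschitz) continuity at $\lambda$.
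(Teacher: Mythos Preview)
Your argument for the two difference-quotient estimates \eqref{eq:limsup-inf-est} and \eqref{eq:liminf-sup-est} is correct and in fact more streamlined than the paper's. The paper localizes the supremum over $F_n(u)^+$ to a $\delta$-ball around $u$ via a contradiction argument and then sends $\delta \to 0$; you instead track the $\mu$-dependent maximizer $u^\mu = m_\mu(u)$ along the fixed fiber $F_n(u)^+$ and establish the parameter-continuity $u^\mu \to u$, which yields both estimates directly. Your proof of $u^\mu \to u$ (boundedness via Lemma~\ref{B-R-Lemma} on a fixed compact interval, compactness in the finite-dimensional space $F_n(u)$, and uniqueness from Proposition~\ref{prop:criterionB2I}) is sound. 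Note also that your proof of \eqref{eq:liminf-sup-est} is self-contained, whereas the paper's version uses right-continuity of $c_\lambda$ as an input.

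There is, however, a genuine gap in your continuity claim. From your two displayed inequalities (together with monotonicity) you obtain
\[
0 \le c_{\lambda-\eps} - c_\lambda \le \tfrac{\eps}{2}\|u^{\lambda-\eps}\|_H^2 \le C\eps
\qquad\text{and}\qquad
c_\lambda - c_{\lambda+\eps} \ge \tfrac{\eps}{2}\|u^{\lambda+\eps}\|_H^2 \ge 0,
\]
so left-Lipschitz continuity follows, but the second line is a \emph{lower} bound and gives no upper control on $c_\lambda - c_{\lambda+\eps}$; a decreasing, everywhere left-Lipschitz function can still jump down. One is tempted to reverse roles and start from $\tilde u \in Q_{\lambda+\eps}$, obtaining $c_\lambda - c_{\lambda+\eps} \le \tfrac{\eps}{2}\|m_\lambda(\tilde u)\|_H^2$, but now $\tilde u$ depends on $\eps$ and Lemma~\ref{B-R-Lemma} only yields an $\eps$-dependent radius, so no uniform bound is available. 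The paper fills this gap by a separate weak-compactness argument: normalizing the $E_n^\perp$-components of minimizers at $\lambda_k \searrow \lambda$ and invoking Lemma~\ref{general-compactness-cond} to produce a weak limit $w \ne 0$ with $c_\lambda \le \sup_{F_n(w)^+}\cJ_\lambda \le \liminf_k c_{\lambda_k}$. You will need either this step or an equivalent device to conclude right-continuity.
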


\begin{proof}
  We first show (\ref{eq:limsup-inf-est}). Let $\delta >0$ and $u \in Q_\lambda$. We claim that there exists $\eps_0>0$ and $\kappa>0$ with
  \begin{equation}
  \label{claim-eps-1}  
  \cJ_{\lambda-\eps}(w) \le c_{\lambda}- \kappa
  \qquad \text{for all $\eps \in [0,\eps_0]$
  and $w \in F_n(u)^+ \setminus B_\delta(u)$,}
  \end{equation}
  where $B_\delta(u)$ is given in the $\|\cdot\|_E$-norm. If this was false, we would have
  $$
  \lim_{k \to \infty} \cJ_{\lambda-\eps_k}(w_k) \ge c_{\lambda}
  $$
  for a sequence $\eps_k \to 0^+$ and a sequence $(w_k)_k$ in $F_n(u)^+ \setminus B_\delta(u)$. By Lemma~\ref{B-R-Lemma} and the positivity of $c_\lambda$, the sequence $(w_k)_k$ is bounded in $E$. Hence, since $F_n(u)^+ \subset F_n(u)$ and the space $F_n(u)$ is finite dimensional, we may pass to a subsequence such that $w_k \to w$ in $E$ for some $w \in \overline{F_n(u)^+} \setminus B_\delta(u)$, which implies that
  $$
  \cJ_\lambda(w)= \lim_{k \to \infty} \cJ_{\lambda-\eps_k}(w_k) \ge c_\lambda>0.
  $$
  Consequently, $w \in F_n(u)^+ \setminus B_\delta(u)$ by (\ref{E_n-nonpositive}), but then the uniqueness stated in Proposition~\ref{prop:criterionB2I} gives that $u=w \in F_n(u)^+ \setminus B_\delta(u)$, a contradiction. Hence (\ref{claim-eps-1}) is true.

  By the monotonicity property stated in Remark~\ref{sec:lambda-depend-minim-1} and Proposition~\ref{prop:criterionB2I}, we now observe that, for $\eps \in (0,\eps_0)$,
    \begin{align*}
    c_\lambda
    &\le c_{\lambda-\eps}
    \le \sup_{F_n(u)^+}\cJ_{\lambda-\eps}
    = \sup_{F_n(u)^+ \cap B_\delta(u)}\cJ_{\lambda-\eps}\\
    &\le c_\lambda
    + \frac{\eps}{2}\sup_{B_\delta(u)}\|\cdot\|_H^2
    \le c_\lambda
    + \frac{\eps}{2}\Bigl(\|u\|_H +\delta\Bigr)^2
    \end{align*}
  and so
  $$
  \frac{c_{\lambda-\eps}-c_{\lambda}}{\eps}
  \le \frac{1}{2}\Bigl(\|u\|_H+\delta\Bigr)^2
  \qquad \text{for all $\eps \in (0,\eps_0)$.}
  $$
  Since $\delta>0$ was chosen arbitrarily, we deduce that
  $$
  \limsup_{\eps \to 0^+}  \frac{c_{\lambda-\eps}-c_{\lambda}}{\eps} \le \frac{1}{2}\|u\|_H^2.
  $$
  Since $u \in Q_\lambda$ was chosen arbitrarily, (\ref{eq:limsup-inf-est}) follows.\\

From (\ref{eq:limsup-inf-est}) and the fact that the map $\lambda \mapsto c_\lambda$ is decreasing, it immediately follows that this map is left continuous. Next, to prove its right continuity, let $\lambda \in (\lambda_{n},\lambda_{n+1})$, and let $(\lambda^k)^k$ be a sequence in $(\lambda,\lambda_{n+1})$ which converges to $\lambda$.
Since the map $\lambda \mapsto c_\lambda$ is decreasing, it follows that
$$
c_\lambda \ge \limsup_{k \to \infty}c_{\lambda^k}.
$$
Let, moreover, $w_k \in E_n^\perp$ be chosen with $\|w_k\|_E=1$ and
$\sup \limits_{F_n(w_k)^+}\cJ_{\lambda^k} = c_{\lambda^k}$.
From Lemma~\ref{general-compactness-cond}, it then follows that
$$
w_k \weak w \in E\qquad \text{after passing to subsequences}
$$
for some $w \in E_n^\perp \setminus \{0\}$, and that
$$
c_\lambda \le \sup_{F_n(w)^+}\cJ_{\lambda} \le \liminf_{k \to \infty}\sup \limits_{F_n(w_k)^+}\cJ_{\lambda^k}= \liminf_{k \to \infty}c_{\lambda^k}.
$$
This shows that $\lim \limits_{k \to \infty}c_{\lambda^k} = c_\lambda$, and we thus obtain the full continuity of the map $\lambda \mapsto c_\lambda$.

Finally, to show (\ref{eq:liminf-sup-est}),
we again let $u \in Q_\lambda$
and $\delta \in \intervaloo{0, \|u\|_H}$.
The $\eps = 0$ case of \eqref{claim-eps-1}, combined with the monotonicity of $\lambda \mapsto \cJ_\lambda(w)$, gives $\cJ_{\lambda+\eps}(w) \le c_\lambda - \kappa$ for all $\eps \ge 0$ and $w \in F_n(u)^+ \setminus B_\delta(u)$, where $B_\delta(u)$ is given in the $\|\cdot\|_E$-norm. Moreover, by the continuity of $\lambda \mapsto c_\lambda$ established above, there exists $\eps_0>0$ with $c_{\lambda+\eps} > c_\lambda - \kappa$ for $\eps \in (0, \eps_0)$. Using Proposition~\ref{prop:criterionB2I}, we then infer that, for $\eps \in (0,\eps_0)$,
  \begin{align*}
c_{\lambda+\eps} &\le \sup_{F_n(u)^+}\cJ_{\lambda+\eps}
              = \sup_{F_n(u)^+ \cap B_\delta(u)}\cJ_{\lambda+\eps}\\
                 &\le \sup_{w \in F_n(u)^+\cap B_\delta(u)}\Bigl(\cJ_\lambda(w)-\frac{\eps}{2}\|w\|_H^2\Bigr)\\
     &\le c_\lambda -\frac{\eps}{2}\inf_{B_\delta(u)}\|w\|_H^2 \le c_\lambda -\frac{\eps}{2}\Bigl(\|u\|_H-\delta \Bigr)^2
  \end{align*}
  and therefore
  $$
\frac{1}{2}\Bigl(\|u\|_H -\delta\Bigr)^2 \le   \frac{c_{\lambda}-c_{\lambda+\eps}}{\eps}  \qquad \text{for $\eps \in (0,\eps_0)$.}
  $$
Taking $\delta \to 0^+$, we deduce that
  $$
\frac{1}{2}\|u\|_H^2 \le \liminf_{\eps \to 0^+}  \frac{c_{\lambda}-c_{\lambda+\eps}}{\eps}.
  $$
  Since $u \in Q_\lambda$ was also chosen arbitrarily, (\ref{eq:liminf-sup-est}) follows.
\end{proof}

\subsection{General action estimates}

To derive an estimate for $c_\lambda$ for $\lambda$ close to $\lambda_{n+1}$, we need the following basic lemma.

\begin{lem}
    \label{consequence-I2-I3}
    For every $\eps>0$ and every compact set $C \subset E \setminus \{0\}$ there exists $\kappa = \kappa(\eps,C)>0$ with
    $$
    I(t u) \ge \kappa t^2
    \qquad \text{for $u \in C$ and $t \ge \eps$.}
    $$
\end{lem}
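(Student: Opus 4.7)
The plan is to exploit the monotonicity assumption in $(I3)$ to reduce the bound for all $t \ge \eps$ to a bound at the single value $t = \eps$, and then to use compactness of $C$ together with continuity of $I$ to obtain a positive uniform lower bound.

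First I would fix $u \in E \setminus \{0\}$ and observe that, since $t \mapsto I(tu)/t^2$ is nondecreasing on $(0,\infty)$ by $(I3)$, we have
\[
\frac{I(tu)}{t^2} \ge \frac{I(\eps u)}{\eps^2}\qquad \text{for every } t \ge \eps,
\]
so that $I(tu) \ge \frac{I(\eps u)}{\eps^2}\, t^2$ for all $t \ge \eps$. Thus the problem reduces to showing that
\[
\kappa := \frac{1}{\eps^2}\inf_{u \in C} I(\eps u)
\]
is strictly positive.

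Next, I would note that $I \in C^1(E,\R)$, so $u \mapsto I(\eps u)$ is continuous on $E$. Since $C$ is compact, this function attains its infimum over $C$ at some $u_* \in C$. Because $0 \notin C$, also $\eps u_* \ne 0$, and $(I2)$ gives $I(\eps u_*) > 0$. Hence $\kappa > 0$, and combining with the previous display yields
\[
I(tu) \ge \kappa t^2 \qquad \text{for all $u \in C$ and $t \ge \eps$,}
\]
which is the desired conclusion. There is essentially no obstacle here; the only care needed is to verify that $I$ is continuous (which follows immediately from $I \in C^1(E,\R)$, so we do not need the stronger strong continuity $(I1)$ for this step) and to check that the infimum is attained and strictly positive.
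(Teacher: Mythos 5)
Your proof is correct and takes essentially the same approach as the paper: both reduce the bound for $t \ge \eps$ to the case $t=\eps$ via the monotonicity in $(I3)$, and then use compactness of $C$, continuity of $I$, and positivity from $(I2)$ to get a uniform positive lower bound. The only difference is presentational — the paper argues by contradiction with sequences, while you argue directly that the infimum is attained and positive.
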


\begin{proof}
    The claim follows from (I2) and (I3).
    Indeed, suppose that this is not the case. Then there exist a sequence $(u_n)_n$ in $C$ and a sequence of numbers $\eps \le t_n < \infty$ with
    \begin{equation}
        \label{eq:consequence-I2-I3-contra}
        \frac{I(t_n u_n)}{t_n^2} \to 0 \qquad \text{as $n \to \infty$.}
    \end{equation}
    Since $C$ is compact, we may pass to a subsequence with $u_n \to u \in E \setminus \{0\}$. Therefore, it follows from the monotonicity property in (I3) and \eqref{eq:consequence-I2-I3-contra} that
    \begin{equation*}
        0
        = \lim_{n \to \infty}
        \frac{I(t_n u_n)}{t_n^2}
        \ge \lim_{n \to \infty}
        \frac{I(\eps u_n)}{\eps^2}
        = \frac{I(\eps u)}{\eps^2}
    \end{equation*}
    and therefore $I(\eps u) \le 0$,
    which contradicts (I2) since $\eps u \ne 0$.
    The claim follows.
\end{proof}

   \begin{lem}
     \label{flat-initial-curve}
     We have $\frac{c_\lambda}{\lambda_{n+1}-\lambda} \to 0$ as $\lambda \to \lambda_{n+1}$. 
   \end{lem}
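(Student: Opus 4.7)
The plan is to bound $c_\lambda$ from above via the inf-sup characterization \eqref{eq:inf-sup-characterization} using the test vector $\phi := \zeta_{n+1}$ (normalized so $\|\phi\|_H = 1$). Since $\phi$ is $H$-orthogonal to $E_n$ and $A\phi = \lambda_{n+1}\phi$, for any $w = v + r\phi \in F_n(\phi)^+$ with $v \in E_n$ and $r>0$ the cross term in $q_\lambda$ vanishes and
$$
q_\lambda(w) = q_\lambda(v) + r^2(\lambda_{n+1}-\lambda).
$$
Because $\lambda > \lambda_n$ we have $q_\lambda(v) \le 0$; combined with $I(w)\ge 0$ from $(I2)$ and $\|w\|_H^2 = \|v\|_H^2 + r^2 \ge r^2$, this already yields
$$
\cJ_\lambda(w) \;\le\; \tfrac{1}{2}r^2(\lambda_{n+1}-\lambda).
$$

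To turn this into an $o(\lambda_{n+1}-\lambda)$ estimate, I must force $r$ to be small on the set where the supremum is realized. For this I would apply Lemma~\ref{consequence-I2-I3} to the compact set $C := \{u \in F_n(\phi) : \|u\|_H = 1\} \subset E \setminus \{0\}$ (compact because $F_n(\phi)$ is finite-dimensional); for any given $\eps>0$ this furnishes $\kappa = \kappa(\eps)>0$ with
$$
I(w) \;\ge\; \kappa \|w\|_H^2 \qquad \text{for every } w \in F_n(\phi)\setminus\{0\} \text{ with } \|w\|_H \ge \eps.
$$
Using $\|w\|_H^2 \ge r^2$ once more, the bound on $\cJ_\lambda(w)$ on the set $\{\|w\|_H \ge \eps\}$ sharpens to
$$
\cJ_\lambda(w) \;\le\; r^2\Bigl[\tfrac{\lambda_{n+1}-\lambda}{2} - \kappa\Bigr] \;\le\; 0 \qquad \text{as soon as } \lambda_{n+1}-\lambda < 2\kappa.
$$
Hence for $\lambda$ close enough to $\lambda_{n+1}$ the supremum of $\cJ_\lambda$ over $F_n(\phi)^+$ is attained on $\{\|w\|_H < \eps\}$, which forces $r < \eps$ there; feeding this back into the earlier estimate yields
$$
c_\lambda \;\le\; \sup_{F_n(\phi)^+}\cJ_\lambda \;\le\; \tfrac{\eps^2}{2}(\lambda_{n+1}-\lambda).
$$
Dividing by $\lambda_{n+1}-\lambda$ and letting $\eps \to 0^+$ concludes the proof.

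The only technically delicate point is obtaining the uniform quadratic lower bound $I(w) \ge \kappa\|w\|_H^2$ on the far portion of $F_n(\phi)^+$, which Lemma~\ref{consequence-I2-I3} is tailor-made to furnish from $(I2)$ and the monotonicity in $(I3)$; finite-dimensionality of $F_n(\phi)$ is essential here so that its $H$-unit sphere is compact in $E$, allowing the constant $\kappa$ to be taken uniform over directions.
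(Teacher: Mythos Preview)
Your proof is correct and follows essentially the same approach as the paper: both apply Lemma~\ref{consequence-I2-I3} to the $H$-unit sphere of the finite-dimensional space $F_n(\zeta_{n+1})$ in order to split the estimate into a small-norm region (giving the $\tfrac{\eps^2}{2}(\lambda_{n+1}-\lambda)$ bound) and a large-norm region (where $\cJ_\lambda \le 0$ once $\lambda_{n+1}-\lambda < 2\kappa$). The only cosmetic difference is that the paper parametrizes points of $F_n(\zeta_{n+1})$ as $tw$ with $\|w\|_H=1$, whereas you use the decomposition $w=v+r\phi$ and the inequality $r \le \|w\|_H$; both lead to the same conclusion.
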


    \begin{proof}
      We consider the subspace $W:= \spann\{\zeta_1,\dots,\zeta_{n+1}\} \subset E$ and the compact set
      $$
      C:= \{w \in W \mid |w|_H = 1\}.
      $$
      For every $w \in C$ and every $t\ge 0$, we then have
     $$
     \cJ_\lambda(tw) = \frac{t^2}{2}q_\lambda(w)- I(tw) 
     \le (\lambda_{n+1} -\lambda)\frac{t^2}{2} - I(tw).
     $$
     Now let $\eps>0$ be given. For $t \in (0,\eps]$ and $w \in C$, we then have 
     $$
     \cJ_\lambda(tw) \le (\lambda_{n+1} -\lambda)\frac{t^2}{2} \le (\lambda_{n+1} -\lambda)\frac{\eps^2}{2},
     $$
     while for $t \ge \eps$  and $w \in C$ we have, with $\kappa$ given by Lemma~\ref{consequence-I2-I3}, 
$$
\cJ_\lambda(tw) \le (\lambda_{n+1} -\lambda)\frac{t^2}{2}  - \kappa t^2 \le \Bigl(\frac{\lambda_{n+1} -\lambda}{2}-\kappa\Bigr)t^2
$$
and therefore
$$
\cJ_\lambda(tw) \le 0 \qquad \text{if $\lambda_{n+1}-\lambda \le 2 \kappa$.}
$$
Consequently,
$$
0 \le \sup_{W} \cJ_\lambda \le  (\lambda_{n+1} -\lambda)\frac{\eps^2}{2} \qquad \text{if $\lambda_{n+1}-\lambda \le 2 \kappa$.}
$$
Since $F_n(\zeta_{n+1})^+ \subset W$, it follows from the inf-sup characterization of $c_\lambda$ that 
$$
\limsup_{\lambda \to \lambda_{n+1}} \frac{c_\lambda}{\lambda_{n+1} -\lambda}  \le
\limsup_{\lambda \to \lambda_{n+1}} \frac{1}{\lambda_{n+1} -\lambda}\; \sup_{F_n(\zeta_{n+1})^+} \cJ_\lambda
\le
\limsup_{\lambda \to \lambda_{n+1}} \frac{1}{\lambda_{n+1} -\lambda}\; \sup_{W} \cJ_\lambda\le \frac{\eps^2}{2}.
$$
Since $\eps>0$ was chosen arbitrarily, the claim follows.
\end {proof}

We can now turn to the proof of Theorem~\ref{M-n-char}.
\begin{proof}[Proof of Theorem~\ref{M-n-char}]
Recalling \eqref{eq:g-n-characterization}, we first note that for every $\lambda \in (\lambda_n, \lambda_{n+1})$ and $u \in Q_\lambda$, we have
\begin{equation*}
    \frac{\|u\|_H^2}{2} \le g(\lambda) \le g_n
\end{equation*}
by (\ref{eq:liminf-sup-est}). Hence,
$$
M_n \subseteq (0,g_n].
$$
To prove the first inclusion in (\ref{eq:M-n-char}), we let $m \in (0,g_n)$. Then there exists $ {\lambda'} \in (\lambda_n,\lambda_{n+1})$ with $g({\lambda'}) > m$. We then define the map 
$$
h: [{\lambda'}, \lambda_{n+1}] \to \R, \qquad
h(\lambda) := \left\{
  \begin{aligned}
    &c_\lambda +m (\lambda-\lambda_{n+1}),&& \qquad \lambda< \lambda_{n+1},\\
    &0,&& \qquad \lambda = \lambda_{n+1},
  \end{aligned}
\right.  
$$
which is continuous by Lemmas~\ref{c-lambda-continuity-etc} and \ref{flat-initial-curve}, and therefore attains a global minimum at a point ${\lambda_*} \in [{\lambda'}, \lambda_{n+1}]$. Since, using again Lemma~\ref{flat-initial-curve}, we have
\begin{align*}
  \lim_{\eps \to 0^+}\frac{h(\lambda_{n+1} - \eps)-h(\lambda_{n+1})}{\eps} =-m <0, 
\end{align*}
and also
\begin{align*}
  \limsup_{\eps \to 0^+}\frac{h({\lambda'} + \eps)-h({\lambda'})}{\eps} =
    \limsup_{\eps \to 0^+}\frac{c_{{\lambda'} + \eps}-c_{{\lambda'}}}{\eps} +m= -g({\lambda'})+m <0,
\end{align*}
we deduce that ${\lambda_*} \in ({\lambda'},\lambda_{n+1})$. The minimization property then implies,  by (\ref{eq:limsup-inf-est}) and (\ref{eq:liminf-sup-est}), that 
$$
0 \le \limsup_{\eps \to 0^+}\frac{h({\lambda_*} - \eps)-h({\lambda_*})}{\eps} = \limsup_{\eps \to 0^+}\frac{c_{{\lambda_*}-\eps}-c_{{\lambda_*}}}{\eps} -m
\le \inf
\biggl\{\frac{\|u\|_H^2}{2} \Bigm| u \in Q_{{\lambda_*}}
\biggr\} -m
$$
and
$$
0 \ge \liminf_{\eps \to 0^+}\frac{h(\lambda_{*}) -h({\lambda_*}+\eps)}{\eps} = \liminf_{\eps \to 0^+} \frac{c_{{\lambda_*}}-c_{{\lambda_*}+\eps}}{\eps} -m
\ge \sup
\biggl\{\frac{\|u\|_H^2}{2} \Bigm| u \in Q_{{\lambda_*}}
\biggr\} - m
$$
and so
$$
m \le \inf
\biggl\{\frac{\|u\|_H^2}{2} \Bigm| u \in Q_{{\lambda_*}}
\biggr\}\le 
\sup
\biggl\{\frac{\|u\|_H^2}{2} \Bigm| u \in Q_{{\lambda_*}}
\biggr\} \le m,
$$
which implies that
$$
\frac{\|u\|_H^2}{2} = m \qquad \text{for all $u \in Q_{{\lambda_*}}$.}
$$
In particular, we have $m \in M_n$, and therefore the first inclusion in (\ref{eq:M-n-char}) follows.
\end{proof}

\subsection{An additional action estimate}
\label{sec:an-additional-action}
The purpose of this section is to establish a further upper estimate for $c_\lambda$, which will turn out to be useful in our proof of Theorem~\ref{torus_mult}.
For this we make the following additional assumption.

\begin{itemize}
\item[(I5)] there exists $p>2$ and $c_I > 0$ such that
$I(u) \ge c_I \|u\|_H^p$ for all $u \in E$.
\end{itemize}

\begin{lem}
  \label{upper-bound-action}
  If (I1)--(I5) hold, then we have
  $$
  c_\lambda \le  \frac{p-2}{2 c_I^{\frac{2}{p-2}} p^{\frac{p}{p-2}}} (\lambda_{n+1}-\lambda)^{\frac{p}{p-2}}.
  $$
\end{lem}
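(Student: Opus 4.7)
\smallskip

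\noindent\textbf{Proof plan.} My plan is to apply the inf-sup characterization \eqref{eq:inf-sup-characterization} to a single well-chosen test direction, namely $u = \zeta_{n+1}$. Since $\zeta_{n+1}$ is orthogonal in $H$ to $\zeta_1, \dots, \zeta_n$, it lies in $E \setminus E_n$, and every element of $F_n(\zeta_{n+1})^+$ may be written as $w = v + t \zeta_{n+1}$ with $v \in E_n$ and $t > 0$. The key algebraic observation is that in this decomposition $q_\lambda$ diagonalizes: using $q_\lambda(v,\zeta_{n+1}) = (\lambda_{n+1}-\lambda)\langle v, \zeta_{n+1}\rangle_H = 0$, one gets
\begin{equation*}
    q_\lambda(w) = q_\lambda(v) + t^2 (\lambda_{n+1}-\lambda),
\end{equation*}
and $q_\lambda(v) \le 0$ since $\lambda_k \le \lambda_n < \lambda$ for $k \le n$. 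Moreover $\|w\|_H^2 = \|v\|_H^2 + t^2 \ge t^2$ by the same orthogonality, so that assumption (I5) gives $I(w) \ge c_I \|w\|_H^p \ge c_I t^p$. Combining these two facts yields, uniformly in $v \in E_n$,
\begin{equation*}
    \cJ_\lambda(w) \le \frac{\lambda_{n+1}-\lambda}{2}\, t^2 - c_I\, t^p =: \phi(t).
\end{equation*}

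\noindent The remaining step is a one-variable optimization. Setting $\alpha := \lambda_{n+1}-\lambda > 0$, the equation $\phi'(t)=0$ gives the unique critical point $t_* = \bigl(\alpha/(c_I p)\bigr)^{1/(p-2)}$, at which
\begin{equation*}
    \phi(t_*)
    = t_*^2\Bigl(\tfrac{\alpha}{2} - c_I t_*^{p-2}\Bigr)
    = \tfrac{\alpha(p-2)}{2p}\, t_*^2
    = \frac{p-2}{2\, c_I^{2/(p-2)}\, p^{p/(p-2)}}\, \alpha^{p/(p-2)},
\end{equation*}
using $1 + \tfrac{2}{p-2} = \tfrac{p}{p-2}$. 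Hence $\sup_{F_n(\zeta_{n+1})^+} \cJ_\lambda \le \phi(t_*)$, and plugging this into \eqref{eq:inf-sup-characterization} gives the claimed bound. There is no real obstacle here: assumption (I5) is precisely what turns the lower bound on $I$ into a useful competitor for the negative quadratic coefficient $\alpha/2$, and the choice $u = \zeta_{n+1}$ is forced because it is the direction that makes the leading coefficient of $q_\lambda$ equal to the spectral gap factor $\lambda_{n+1}-\lambda$ one sees in the conclusion.
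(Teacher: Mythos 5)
Your proof is correct and follows essentially the same route as the paper's: both apply the inf--sup characterization \eqref{eq:inf-sup-characterization} to the test direction $\zeta_{n+1}$, use the $H$-orthogonality of $E_n$ and $\zeta_{n+1}$ to bound $q_\lambda$ by $(\lambda_{n+1}-\lambda)t^2$ and $\|w\|_H^p$ from below by $t^p$, and then maximize the resulting one-variable function $t\mapsto \tfrac{\lambda_{n+1}-\lambda}{2}t^2 - c_I t^p$.
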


\begin{proof}
  Recall that $\zeta_{n+1} \in E_n^\perp$ is an eigenfunction of $A$ associated with the eigenvalue $\lambda_{n+1}$ and $\|\zeta_{n+1}\|_H = 1$.
  By \eqref{eq:inf-sup-characterization}, we have
  $$
  c_\lambda \le \sup_{F_n(\zeta_{n+1})^+}\cJ_\lambda.
  $$
  Moreover, for every
  $u = s \zeta_{n+1} + v \in F_n(\zeta_{n+1})^+
  \subset F_n(\zeta_{n+1})$ with $s >0$
  and $v \in E_n$, we have
  \begin{align*}
  \cJ_\lambda(u)&= \frac{1}{2}q_\lambda(u)-I(u)
  \le \frac{s^2}{2} q_\lambda(\zeta_{n+1}) -c_I\|u\|_H^p \\
  &\le \frac{(\lambda_{n+1}-\lambda)s^2}{2}\|\zeta_{n+1}\|_H^2 -c_I s^p \|\zeta_{n+1}\|_H^p = \frac{(\lambda_{n+1}-\lambda)}{2} s^2
 -c_I s^p.
  \end{align*}
  Since the maximum of the right-hand side for $s \ge 0$
  is given by
$$ \frac{p-2}{2 c_I^{\frac{2}{p-2}} p^{\frac{p}{p-2}}}
(\lambda_{n+1}-\lambda)^{\frac{p}{p-2}},
$$
the claim follows.
\end{proof}

\section{Further estimates in the case
of a metric measure space}
\label{sec:furth-estim-case}

Let us now assume that $H = L^2(\Omega)$
for a metric measure space $(\Omega,d,\mu)$, i.e., $(\Omega,d)$ is a compact metric space and $\mu$ is a
finite Borel measure
on $\Omega$. For matters of convenience, we shall simply write
$\intd x$ in place of $\intd\mu(x)$ and $L^p(\Omega)$ in place of $L^p(\Omega, \intd\mu)$ in the following.
We assume that the operator $A$, the space $E$
and the related notations are given as before,
and that the functional $I \in C^1(E)$ has the form 
\begin{equation}
  \label{eq:special-I-metric-measure-space}
I(u) = \frac{1}{p}\int_{\Omega}r(x)|u|^p \intd x
\end{equation}
with some $p>2$ and some weight function $r \in L^\infty(\Omega)$ with $r \ge r_0 >0$ in $\Omega$. Moreover, we assume that
\begin{equation}
  \label{eq:compact-embedding-assumption}
\text{$E$ is compactly embedded in $L^p(\Omega)$.}   
\end{equation}
The special form of the functional $I$ in (\ref{eq:special-I-metric-measure-space})
and (\ref{eq:compact-embedding-assumption}) then imply that assumptions $(I1)$--$(I4)$ from the introduction are satisfied.
To see this, we note in particular that the function
$t \mapsto F(t):= |t|^p$ satisfies (\ref{eq:szulkin-weth-lemma}). Moreover, since
$$
I(u) \ge \frac{r_0}{p}\|u\|_{L^p(\Omega)}^p
\ge \frac{r_0 \mu(\Omega)^{\frac{2-p}{2}}}{p}
\|u\|_{L^2(\Omega)}^p
= \frac{r_0 \mu(\Omega)^{\frac{2-p}{2}}}{p}
\|u\|_H^p \qquad \text{for $u \in E$,}
$$
condition $(I5)$, introduced in Section~\ref{sec:an-additional-action}, is also satisfied with
\begin{equation}
  \label{eq:c-I-metric-measure}
c_I:= \frac{r_0 \mu(\Omega)^{\frac{2-p}{2}}}{p}.
\end{equation}
For fixed $n \in \N$ satisfying $\lambda_n < \lambda_{n+1}$,
the aim of this section is to give a lower bound
on the maximal $L^2$-norm of least action
solutions corresponding to values
$\lambda \in (\lambda_{n},\lambda_{n+1})$.
To do so, we make the following additional assumption, which is motivated
by our application to the semilinear equation~(\ref{torus-eq}) on the flat $2$-torus.

\begin{enumerate}[label=(I6)]
\item\label{I6} There exist constants $C>0$ with $\|\zeta_k\|_{L^\infty(\Omega)} \le C$ for all $k$, and 
$
\sum \limits_{\lambda_k>0}
\frac{1}{\lambda_k} < \infty.
$
\end{enumerate}

The main result of this section
is the following proposition. 

\begin{prop}
\label{eq-abstract-L2-est}
Let $I$ is given by (\ref{eq:special-I-metric-measure-space}), let assumptions (I1)--(I4) and (I6) be satisfied, and let
\begin{equation}
  \label{eq:def-d-lambda}
d(n,\lambda) :=\sum_{k= n+1}^\infty \frac{1}{\lambda_{k}-\lambda}.
\end{equation}
Then we have, with $C$ given by (I6),  
\begin{equation}
  \label{eq:L2-abstract-est-sup-greater-4}
\sup \Bigl\{\|u\|_{L^2} \bigm| \text{$u \in Q_\lambda$
for some $\lambda \in (\lambda_n,\lambda_{n+1})$}\Bigr \} \ge \sup_{\lambda \in [\lambda_n,\lambda_{n+1})} \Bigl[C^2 C_p (\lambda_{n+1}-\lambda)^{\frac{p(p-4)}{(p-2)^2}}d(n,\lambda) \Bigr]^{-\frac{p-2}{4}},
\end{equation}
in the case $p>4$ with 
\begin{equation}
  \label{eq:def-c-o-p-greater-4}
C_p
:=  \|r\|_{L^\infty(\Omega)} r_0^{-\frac{p(p-4)}{(p-2)^2}} \mu(\Omega)^{\frac{p-4}{p-2}},
\end{equation}
and
\begin{equation}
  \label{eq:L2-abstract-est-sup-smaller-4}
\sup \Bigl\{\|u\|_{L^2} \bigm| \text{$u \in Q_\lambda$ for some $\lambda \in (\lambda_n,\lambda_{n+1})$}\Bigr \} \ge \sup_{\lambda \in [\lambda_n,\lambda_{n+1})} \Bigl[C^2 C_p d(n,\lambda) \Bigr]^{-\frac{1}{p-2}},
\end{equation}
in the case $2 <p \le 4$ with 
\begin{equation}
  \label{eq:def-c-o-p-smaller-4}
C_p := \|r\|_{L^\infty(\Omega)} \mu(\Omega)^{\frac{4-p}{2}}.
\end{equation}
\end{prop}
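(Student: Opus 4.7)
The plan is to show, for each $\lambda \in (\lambda_n,\lambda_{n+1})$ and any individual minimizer $u \in Q_\lambda$, a lower bound on $\|u\|_{L^2(\Omega)}$ depending only on $\lambda$, $d(n,\lambda)$, and the data $r,p,\mu(\Omega)$; since the left-hand side of~\eqref{eq:L2-abstract-est-sup-greater-4}--\eqref{eq:L2-abstract-est-sup-smaller-4} is a supremum that already includes this one pair $(\lambda,u)$, taking the supremum over $\lambda$ (plus continuity to permit $\lambda = \lambda_n$) then gives the stated estimates.

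The key ingredient will be a pointwise $L^\infty$-bound on the projection $u^+$ of $u$ onto $E_n^\perp$ that leverages assumption \ref{I6}. Writing $u = u^+ + u^-$ with $u^- \in E_n$ and expanding $u^+ = \sum_{k \ge n+1}a_k \zeta_k$, the Euler--Lagrange identity satisfied by $u \in Q_\lambda$ gives $a_k(\lambda_k-\lambda) = \int r|u|^{p-2}u\zeta_k$, and a Cauchy--Schwarz inequality weighted by $(\lambda_k-\lambda)^{1/2}$ combined with the uniform bound $\|\zeta_k\|_\infty \le C$ produces
\[
|u^+(x)|^2 \le q_\lambda(u^+)\sum_{k\ge n+1}\frac{\zeta_k(x)^2}{\lambda_k-\lambda} \le C^2 d(n,\lambda)\,q_\lambda(u^+),
\]
which is the source of the factor $d(n,\lambda)$ in the conclusion. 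Pairing this with the Nehari identity $q_\lambda(u^+) = \int r|u|^{p-2}u\,u^+$, Hölder's inequality, and self-iterating then yields the master estimate
\[
q_\lambda(u^+) \le C^2 \|r\|_\infty^2\mu(\Omega)^{2/p}\,d(n,\lambda)\,\|u\|_{L^p}^{2(p-1)}.
\]

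The two regimes in the conclusion will arise from inserting different bounds on $\|u\|_{L^p}$ into this master estimate and isolating $\|u\|_{L^2}$. For $2 < p \le 4$, I would use the interpolation inequality $\|u\|_{L^p}^p \le \|u\|_{L^\infty}^{p-2}\|u\|_{L^2}^2$ together with the Hölder comparison $\|u\|_{L^2} \le \mu(\Omega)^{1/2-1/p}\|u\|_{L^p}$; combining these with the master estimate and the sign information $q_\lambda(u^+) \ge q_\lambda(u) \ge r_0 \|u\|_{L^p}^p$ (coming from the identity $q_\lambda(u) = \int r|u|^p$) allows one to isolate $\|u\|_{L^2}$ after rearrangement, producing~\eqref{eq:L2-abstract-est-sup-smaller-4} with $C_p$ as in~\eqref{eq:def-c-o-p-smaller-4}. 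For $p > 4$ the interpolation alone fails to close, and Lemma~\ref{upper-bound-action} is additionally needed: the upper bound $c_\lambda \le \mathrm{const}\cdot (\lambda_{n+1}-\lambda)^{p/(p-2)}$, combined with $q_\lambda(u) = \tfrac{2p}{p-2}c_\lambda$ and $r_0\|u\|_{L^p}^p \le q_\lambda(u)$, yields an additional control on $\|u\|_{L^p}$, and inserting it produces the extra factor $(\lambda_{n+1}-\lambda)^{p(p-4)/(p-2)^2}$ in~\eqref{eq:L2-abstract-est-sup-greater-4}, matching the $C_p$ of~\eqref{eq:def-c-o-p-greater-4}.

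The main obstacle I anticipate is managing the low-frequency component $u^-$. The pointwise bound above only involves $u^+$; to avoid introducing the low-frequency sum $\sum_{k\le n}(\lambda - \lambda_k)^{-1}$ (which blows up as $\lambda \to \lambda_n$) into the final estimate, one must exploit the sign $q_\lambda(u^-) \le 0$, which gives $q_\lambda(u^+) \ge q_\lambda(u) > 0$ for free, and the Nehari identity above. The second technical point is the careful bookkeeping of exponents that separates the two regimes and, in particular, the verification that the constants $C_p$ match the forms in~\eqref{eq:def-c-o-p-greater-4} and~\eqref{eq:def-c-o-p-smaller-4}; this amounts to routine but delicate algebra rather than new ideas.
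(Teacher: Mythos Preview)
Your approach has a genuine gap at the conversion step from an $L^p$ lower bound to an $L^2$ lower bound. Combining your master estimate $q_\lambda(u^+)\le C^2\|r\|_\infty^2\mu(\Omega)^{2/p}d(n,\lambda)\|u\|_{L^p}^{2(p-1)}$ with the sign inequality $q_\lambda(u^+)\ge r_0\|u\|_{L^p}^p$ yields only $\|u\|_{L^p}^{p-2}\ge r_0\bigl(C^2\|r\|_\infty^2\mu(\Omega)^{2/p}d(n,\lambda)\bigr)^{-1}$, i.e.\ a lower bound on $\|u\|_{L^p}$. Your two proposed tools for passing to $\|u\|_{L^2}$ both fail: the interpolation $\|u\|_{L^p}^p\le\|u\|_{L^\infty}^{p-2}\|u\|_{L^2}^2$ requires control of $\|u\|_{L^\infty}$, but you have only bounded $\|u^+\|_{L^\infty}$ and nothing prevents $u^-\in E_n$ from being large in $L^\infty$; the H\"older comparison $\|u\|_{L^2}\le\mu(\Omega)^{1/2-1/p}\|u\|_{L^p}$ goes in the wrong direction. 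The same obstruction appears for $p>4$: inserting the $c_\lambda$ bound controls $\|u\|_{L^p}$ from \emph{above}, which together with your lower bound on $\|u\|_{L^p}$ never isolates $\|u\|_{L^2}$.

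The paper sidesteps this entirely with a spectral argument you do not use. Writing $V_u=r|u|^{p-2}$, the solution $u$ is an eigenfunction of the Schr\"odinger operator $A-V_u$ at eigenvalue $\lambda$; since $V_u\ge0$, this must be at least the $(n{+}1)$-th eigenvalue, so the sup--inf characterization of $\lambda_{n+1}(u)$ yields some test function $v\in E_n^\perp\setminus\{0\}$ with $q_\lambda(v)\le\int_\Omega V_u v^2$. Applying your $L^\infty$ bound to $v$ (not to $u^+$) gives $\int_\Omega V_u v^2\le\|V_u\|_{L^1}\|v\|_{L^\infty}^2\le C^2d(n,\lambda)\|V_u\|_{L^1}\,q_\lambda(v)$, hence the key inequality $1\le C^2d(n,\lambda)\|V_u\|_{L^1}$. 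The paper then bounds $\|V_u\|_{L^1}$ directly in terms of $\|u\|_{L^2}$: for $2<p\le4$ by H\"older $\|V_u\|_{L^1}\le\|r\|_\infty\mu(\Omega)^{(4-p)/2}\|u\|_{L^2}^{p-2}$, and for $p>4$ by interpolating $\|u\|_{L^{p-2}}$ between $L^2$ and $L^p$ and using Lemma~\ref{upper-bound-action} to control the $L^p$ factor. This yields the stated constants exactly.

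Your framework can be repaired without the min--max argument, but it requires a different step than the one you propose: apply Cauchy--Schwarz with weight $V_u$ to the Nehari identity,
\[
q_\lambda(u^+)=\int_\Omega V_u\,u\,u^+\le\Bigl(\int_\Omega V_u u^2\Bigr)^{1/2}\Bigl(\int_\Omega V_u (u^+)^2\Bigr)^{1/2}\le q_\lambda(u)^{1/2}\bigl(C^2d(n,\lambda)\|V_u\|_{L^1}\,q_\lambda(u^+)\bigr)^{1/2},
\]
and combine with $q_\lambda(u^+)\ge q_\lambda(u)>0$ to recover $1\le C^2d(n,\lambda)\|V_u\|_{L^1}$. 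From there the paper's H\"older estimates on $\|V_u\|_{L^1}$ finish the proof.
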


\begin{proof}
In the following, we let $\lambda \in (\lambda_n,\lambda_{n+1})$. We start by noting that by (I6) we have, for $v = \sum \limits_{k=n+1}^\infty c_k \zeta_k
\in E_n^\perp$,
\begin{align}
  \|v\|_{L^\infty}^2 &\le C^2 \Bigl(\sum_{k=n+1}^\infty |c_k| \Bigr)^2 \le C^2 \Bigl(\sum_{k=n+1}^\infty \frac{1}{\lambda_k-\lambda}\Bigr) \Bigl(\sum_{k=n+1}^\infty (\lambda_k -\lambda)|c_k|^2\Bigr)\nonumber \\
                     &= C^2 \Bigl(\sum_{k=n+1}^\infty \frac{1}{\lambda_k-\lambda}\Bigr)q_\lambda(v)=C^2 d(n,\lambda) q_\lambda(v). \label{v-L-infty-est}
\end{align}
Next we let $u \in Q_\lambda$, so $u$ is a least action solution of (\ref{eq:nonlinear-wave-intro}) satisfying the basic estimate
\begin{equation}
  \label{eq:simple-energy-est}
c_\lambda = \cJ_\lambda(u)= \Bigl(\frac{1}{2}-\frac{1}{p}\Bigr)\int_{\Omega}r(x)|u|^p \intd x
\ge  \frac{r_0(p-2)}{2p} \|u\|_{L^p}^p.
\end{equation}
Next, we derive an estimate for the $L^1$-norm of the function 
$$
V_u: \Omega \to \R, \qquad V_u(x)= r(x)|u(x)|^{p-2},
$$
in terms of the $L^2$-norm of $u$, and we need to distinguish cases for this.
In the case $2<p\le 4$, Hölder's inequality implies that
\begin{equation}
  \label{eq:p-smaller-4-est}
  \|V_u\|_{L^1} \le C_p \|u\|_{L^2}^{p-2}
\end{equation}
with $C_p$ given in (\ref{eq:def-c-o-p-smaller-4}).
In the case $p > 4$, we have (using again Hölder's inequality)
\begin{equation*}
\|V_u\|_{L^1} \le \|r\|_{L^\infty}\|u\|_{L^{p-2}}^{p-2} \le \|r\|_{L^\infty} \|u\|_{L^2}^{\frac{4}{p-2}} \|u\|_{L^p}^{p-\frac{2p}{p-2}}
= \|r\|_{L^\infty} \|u\|_{L^2}^{\frac{4}{p-2}} \|u\|_{L^p}^{\frac{p(p-4)}{p-2}}.
\end{equation*}
Hence, using Lemma~\ref{upper-bound-action}, (\ref{eq:c-I-metric-measure}) and (\ref{eq:simple-energy-est}), we obtain
\begin{align}
\|V_u\|_{L^1}
&\le\|r\|_{L^\infty} \|u\|_{L^2}^{\frac{4}{p-2}} \|u\|_{L^p}^{\frac{p(p-4)}{p-2}} \le \|r\|_{L^\infty} \|u\|_{L^2}^{\frac{4}{p-2}} 
\Bigl(\frac{2p c_\lambda}{r_0(p-2)}\Bigr)^{\frac{p-4}{p-2}}\nonumber\\
&\le \|r\|_{L^\infty} \|u\|_{L^2}^{\frac{4}{p-2}}
\left[\Bigl(\frac{2p }{r_0(p-2)}\Bigr)
\frac{p-2}{2 c_I^{\frac{2}{p-2}} p^{\frac{p}{p-2}}}
(\lambda_{n+1}-\lambda)^{\frac{p}{p-2}}
\right]^{\frac{p-4}{p-2}} \nonumber \\
 &= \|r\|_{L^\infty} \|u\|_{L^2}^{\frac{4}{p-2}}
\biggl(\frac{1}{r_0 (p c_I)^{\frac{2}{p-2}}}
(\lambda_{n+1}-\lambda)^{\frac{p}{p-2}}
\biggr)^{\frac{p-4}{p-2}}
= \|r\|_{L^\infty} \|u\|_{L^2}^{\frac{4}{p-2}}
\biggl(\frac{\mu(\Omega)}{r_0^{\frac{p}{p-2}}}
(\lambda_{n+1}-\lambda)^{\frac{p}{p-2}}
\biggr)^{\frac{p-4}{p-2}} \nonumber \\
&= C_p \|u\|_{L^2}^{\frac{4}{p-2}}(\lambda_{n+1}-\lambda)^{\frac{p(p-4)}{(p-2)^2}} \label{eq:p-grater-4-est-1}
\end{align}
with $C_p$ given in (\ref{eq:def-c-o-p-greater-4}).

\medbreak
Next we observe that, since $u \in Q_\lambda$, the function $u$ is an eigenfunction of the operator $A-V_u$ corresponding to the eigenvalue $\lambda$. Here, we note that 
$A-V_u$ is a self adjoint operator defined as a form sum of the operators $A$ and $V_u$. Here we note that, by Hölder's inequality, $V_u$ is a well-defined and continuous multiplication operator $L^p(\Omega) \to L^{p'}(\Omega)$. Therefore, by (\ref{eq:compact-embedding-assumption}) and duality, it also defines a compact operator $E \to E'$. In other words, $V_u$ is a symmetric and relatively form compact perturbation of the operator $A$, which implies that $A-V_u$ is also bounded from below and has a compact resolvent, see \cite[Chapter XIII.4 and Problem XIII.39]{reed-simon}.
Therefore $A-V_u$ admits a sequence of eigenvalues
$$
\lambda_1(u) \le \lambda_2(u) \le \dots \le \lambda_k(u)
\to \infty \qquad \text{as $k \to \infty$.}
$$
Moreover, since $V_u \ge 0$,
we have $\lambda_k(u) \le \lambda_k$
for every $k \in \N$, which implies that
$\lambda = \lambda_k(u)$ for some $k \ge n+1$.
From the sup-inf characterization
of this eigenvalue, it follows that
\begin{align*}
\lambda &= \sup_{\stackrel{E' \subset E}{\codim E' \le k-1}}\inf_{v \in E' \setminus \{0\}}\frac{1}{\|v\|_{L^2}^2}\Bigl(\langle A^{1/2}v, A^{1/2}v\rangle - \int_{\Omega}V_u v^2\intd x\Bigr)\\ 
&\ge \inf_{v \in E_n^\perp \setminus \{0\}}\frac{1}{\|v\|_{L^2}^2}\Bigl(\langle A^{1/2}v, A^{1/2}v\rangle - \int_{\Omega}V_u v^2\intd x\Bigr)
\end{align*}
and therefore
\begin{equation}
  \label{eq:lambda-characterization}
\inf_{v \in E_n^\perp \setminus \{0\}}\frac{1}{\|v\|_{L^2}^2}\Bigl(q_\lambda(v) - \int_{\Omega}V_u v^2\intd x\Bigr)\le 0.
\end{equation}
Moreover, by (\ref{v-L-infty-est}) and (\ref{eq:p-grater-4-est-1}) we get, in the case $p>4$,
\begin{align*}
  q_\lambda(v)-\int_{\Omega}V_u v^2\intd x &\ge q_\lambda(v)- \|V_u\|_{L^1}
                                          \|v\|_{L^\infty}^2\ge q_\lambda(v)\Bigl(1 - C^2 \|V_u\|_{L^1}d(n,\lambda)\Bigr)\\
  &\ge q_\lambda(v)\Bigl(1 - C^2 C_p \|u\|_{L^2}^{\frac{4}{p-2}}(\lambda_{n+1}-\lambda)^{\frac{p(p-4)}{(p-2)^2}}d(n,\lambda)\Bigr) \qquad \text{for $v \in E_n^\perp$.}
\end{align*}
Combining this estimate with (\ref{eq:lambda-characterization}) gives 
$$
\inf_{v \in E_n^\perp \setminus \{0\}}\frac{q_\lambda(v)}{\|v\|_{L^2}^2} \Bigl(1 - C^2 C_p\|u\|_{L^2}^{\frac{4}{p-2}}(\lambda_{n+1}-\lambda)^{\frac{p(p-4)}{(p-2)^2}}d(n,\lambda)\Bigr) \le 0.
$$
Since
$\frac{q_\lambda(v)}{\|v\|_{L^2}^2} \ge \lambda_{n+1}-\lambda>0$
for every $v \in E_n^\perp \setminus \{ 0 \}$, we conclude in the case $p>4$ that 
$$
1 - C^2 C_p\|u\|_{L^2}^{\frac{4}{p-2}}(\lambda_{n+1}-\lambda)^{\frac{p(p-4)}{(p-2)^2}}d(n,\lambda) \le 0
$$
and therefore
\begin{equation*}
    \|u\|_{L^2}
    \ge \Bigl[
    C^2 C_p (\lambda_{n+1}-\lambda)^{\frac{p(p-4)}{(p-2)^2}}
    d(n,\lambda)\Bigr]^{-\frac{p-2}{4}}.
\end{equation*}
By continuity of the right-hand side of this inequality in $\lambda \in [\lambda_n,\lambda_{n+1})$,
\eqref{eq:L2-abstract-est-sup-greater-4} now follows.

In the case $2<p\le 4$, we use \eqref{v-L-infty-est} and (\ref{eq:p-smaller-4-est}) to obtain similarly that
$$
  q_\lambda(v)-\int_{\Omega}V_u v^2\intd x \ge  q_\lambda(v)\Bigl(1 - C^2 \|V_u\|_{L^1}d(n,\lambda)\Bigr)
  \ge q_\lambda(v)\Bigl(1 - C^2C_p \|u\|_{L^2}^{p-2} d(n,\lambda)\Bigr) \qquad \text{for $v \in E_n^\perp$}
$$
and therefore 
$$
\inf_{v \in E_n^\perp \setminus \{0\}}\frac{q_\lambda(v)}{\|v\|_{L^2}^2}
\Bigl(1 - C^2C_p \|u\|_{L^2}^{p-2} d(n,\lambda)\Bigr) \le 0.
$$
Using again (\ref{eq:lambda-characterization}) and the fact that $\frac{q_\lambda(v)}{\|v\|_{L^2}^2} \ge \lambda_{n+1}-\lambda>0$ for every $v \in E_n^\perp \setminus \{ 0 \}$, we conclude that 
$$
1 - C^2C_p \|u\|_{L^2}^{p-2}d(n,\lambda) \le 0
$$
and therefore
\begin{equation*}
    \|u\|_{L^2}
    \ge \Bigl[
    C^2 C_p d(n,\lambda)\Bigr]^{-\frac{1}{p-2}}.
\end{equation*}
Again this inequality extends to $\lambda \in [\lambda_n,\lambda_{n+1})$ by continuity, and thus (\ref{eq:L2-abstract-est-sup-smaller-4}) follows. 
\end{proof}

\section{Nonlinear Schrödinger equations
    on compact metric graphs}
  \label{sec:nonl-schr-equat}
  
  In this section, we focus on the study
of NLS equations on compact metric graphs.

\subsection{Setup}
We recall that a compact metric graph
is a structure made of a finite number
of edges, identified to compact intervals of $\R$,
joined together at vertices.

Given a compact metric graph $\cG$,
we take $H = L^2(\cG)$.
We consider $A$ to be the operator acting
as $u \mapsto -u''$ on every edge
of the graph and $D(A)$ to be a vector space
of functions on $\cG$, $H^2$ edge by edge,
such that $A$ is self-adjoint on $D(A)$
(see \cite[Theorem 1.4.4]{BerKuc} for a description
of all such vertex conditions).

Then, the space $E$ is made of functions
$H^1$ on each edge with suitable vertex conditions
(see \cite[Theorem 1.4.11]{BerKuc})
and is thus continuously embedded
in the space $L^p(\cG)$
for any $p \in [1, \infty]$.

\medbreak
Let us consider a function $f \in \mathcal{C}(\R, \R)$
satisfying (f1) and (f2) as in the statement
of Theorem~\ref{graph_mult}.
We consider the functional defined
by $I(u) := \int_{\cG} F(u(x)) \intd x$.
Then, $I$ belongs to $C^1(E, \R)$
and one has that
$I'(u)v = \int_{\cG} f(u(x)) v(x) \intd x$.
With this choice of $I$, the abstract equation
$Au = \lambda u + I'(u)$
becomes the nonlinear Schrödinger equation
\begin{equation*}
    -u'' = \lambda u + f(u)
\end{equation*}
coupled with the vertex conditions encoded in $D(A)$.

\medbreak
We will check that $I$ satisfies assumptions (I1) to (I4).
First, let us list a few useful
properties of functions $f$ satisfying (f1).
\begin{prop}
    \label{prof_f}
    Let $f \in \mathcal{C}(\R, \R)$ satisfy (f1). Then:
    \begin{enumerate}
        \item $f(u) > 0$ for $u > 0$
        and $f(u) < 0$ for $u < 0$;
        \item $F$ is decreasing on $\intervaloc{-\infty, 0}$
        and increasing on $\intervalco{0, +\infty}$;
        \item\label{prop_f_3} for every $u \ne 0$, the function
        $t \mapsto \frac{F(tu)}{t^2}$ is nondecreasing
        on $\intervaloo{0, +\infty}$ with
        \begin{equation*}
            \lim_{t \to 0^+} \frac{F(tu)}{t^2} = 0
            \quad
            \text{and}
            \quad
            \lim_{t \to +\infty} \frac{F(tu)}{t^2} = +\infty.
        \end{equation*}
    \end{enumerate}
\end{prop}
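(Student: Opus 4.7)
The proposition collects three elementary consequences of (f1), and the plan is to establish them in the stated order. For assertion (1), observe that $\psi(s):=f(s)/|s|$ is strictly increasing on $\R\setminus\{0\}$ with $\psi(s)\to 0$ as $s\to 0$; hence $\psi>0$ on $(0,+\infty)$ (it increases from $0^+$ to $+\infty$) and $\psi<0$ on $(-\infty,0)$ (it increases from $-\infty$ to $0^-$). Multiplying by $|s|>0$ yields $f(s)>0$ for $s>0$ and $f(s)<0$ for $s<0$. Assertion (2) is then immediate from $F(0)=0$, $F'=f$ and (1).

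For (3), fix $u\neq 0$, set $G(t):=F(tu)/t^2$ for $t>0$, and compute $t^3 G'(t) = tu\,f(tu) - 2F(tu)$. The substitution $\sigma = \eta tu$ (valid for either sign of $u$) yields $F(tu) = tu \int_0^1 f(\eta tu)\,\intd \eta$; plugging this in together with the factorisation $f(\sigma)=|\sigma|\psi(\sigma)$ gives
\begin{equation*}
t^3 G'(t) \;=\; tu\,|tu|\,\Bigl[\psi(tu) - 2\int_0^1 \eta\,\psi(\eta tu)\,\intd\eta\Bigr].
\end{equation*}
For $u>0$, the prefactor $tu\,|tu|$ is positive, and the bracket is positive because $\psi$ strictly increasing and $\eta tu<tu$ on $\eta\in(0,1)$ yield $\int_0^1 \eta\,\psi(\eta tu)\,\intd\eta < \psi(tu)\int_0^1 \eta\,\intd\eta = \psi(tu)/2$. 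For $u<0$, the prefactor is negative and $\eta tu>tu$ on $\eta\in(0,1)$ reverses the integral inequality, so the bracket is negative. In either case $G'(t)>0$, and $G$ is strictly increasing, which is stronger than the required monotonicity.

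For the two limits, $\psi(s)\to 0$ as $s\to 0$ gives $|f(\sigma)|\le \eps|\sigma|$ for $|\sigma|$ small and hence $|F(tu)|\le \eps (tu)^2/2$ for $t$ near $0$, so $G(t)\to 0$. Conversely, $\psi(s)\to \pm\infty$ as $s\to\pm\infty$ provides, for any $M>0$, the bound $|f(\sigma)|\ge M|\sigma|$ for $|\sigma|$ large, which integrates to $F(tu)\ge \tfrac{M}{2}(tu)^2 + O(1)$ as $t\to +\infty$, whence $G(t)\to +\infty$. No step of this argument presents a real obstacle; the only point that requires some care is the sign bookkeeping in the two cases $u>0$ and $u<0$ of the monotonicity computation for~(3), which is handled uniformly by extracting the factor $tu\,|tu|$ above.
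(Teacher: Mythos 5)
Your proof is correct. For assertion (3), however, you take a somewhat different route from the paper. The paper performs the substitution $s = tr$ once and for all, obtaining
\begin{equation*}
\frac{F(tu)}{t^2} = \int_0^u |r|\,\frac{f(tr)}{|tr|}\,\mathrm{d}r,
\end{equation*}
and then reads off the monotonicity in $t$ directly from the pointwise monotonicity of the integrand (since $r$ and $tr$ share the sign of $u$, and $s\mapsto f(s)/|s|$ is increasing), with no differentiation needed; the two limits also follow at once by monotone convergence of the same integrand. You instead differentiate $G(t)=F(tu)/t^2$, represent $F(tu)$ by the substitution $\sigma=\eta tu$, and show that $t^3 G'(t) = tu|tu|\bigl[\psi(tu)-2\int_0^1\eta\,\psi(\eta tu)\,\mathrm{d}\eta\bigr]>0$, which requires slightly more bookkeeping (both in the differentiation and in the two sign cases) but yields the marginally stronger conclusion of strict monotonicity. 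Both arguments rest on the same underlying substitution and the monotonicity of $\psi=f/|\cdot|$; the paper's version is just the more economical packaging of the same idea.
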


\begin{proof}
    The two first points are easily verified.
    For \ref{prop_f_3}, we have that, for $t > 0$,
    \begin{equation*}
        \frac{F(tu)}{t^2}
        = \int_0^{tu} \frac{f(s)}{t^2} \intd s\\
        = \int_0^{u} |r| \frac{f(t r)}{|tr|}   \intd r,
    \end{equation*}
    which is nondecreasing in  $t \in \intervaloo{0, +\infty}$ with
    $\lim \limits_{t \to 0^+} \frac{F(tu)}{t^2} = 0$ because the function $s \mapsto \frac{f(s)}{|s|}$ is increasing on $\R \setminus \{0\}$ with $\lim \limits_{s \to 0}\frac{f(s)}{|s|}= 0$. Moreover, the limit $\lim \limits_{t \to +\infty} \frac{F(tu)}{t^2} = +\infty$ follows from the limits $\lim \limits_{s \to \pm \infty}\frac{f(s)}{|s|} =  \pm \infty$ assumed in (f1).
\end{proof}

\begin{prop}
    The functional $I \in \mathcal{C}(E, \R)$ defined by
    $I(u) := \int_{\cG} F(u(x)) \intd x$
    satisfies the properties (I1) to (I4).
\end{prop}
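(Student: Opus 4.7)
The plan is to verify (I1)--(I4) one at a time by reducing each property for $I$ to a pointwise property of $F$ and integrating over $\cG$. Three ingredients will drive the verification: the compact embedding $E \hookrightarrow L^\infty(\cG)$ (which follows edge by edge from the 1D Sobolev embedding $H^1([0,\ell]) \hookrightarrow C([0,\ell])$), Proposition~\ref{prof_f} describing $F$ pointwise, and Remark~\ref{I4_practice} which, applied to $F$ with $F'=f$, supplies the pointwise Szulkin--Weth inequality.

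For (I1), if $u_n \weak u$ in $E$, the above compact embedding upgrades this to strong convergence $u_n \to u$ in $L^\infty(\cG)$. Continuity of $F$ on bounded intervals then gives $F(u_n) \to F(u)$ in $L^\infty(\cG)$, and integrating over the finite-measure space $\cG$ yields $I(u_n) \to I(u)$. For (I2), $I(0)=0$ is immediate; if $u \in E \setminus \{0\}$ then $u$ is continuous on each edge, so $\{u \ne 0\}$ contains a nonempty open set and in particular has positive measure, while Proposition~\ref{prof_f} yields $F(s)>0$ for all $s \ne 0$, so integration produces $I(u)>0$.

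For (I3), Proposition~\ref{prof_f}(3) gives pointwise monotonicity of $t \mapsto F(tu(x))/t^2$ on $(0,\infty)$, which integrates directly to the claimed monotonicity of $t \mapsto I(tu)/t^2$. The limit at $0^+$ follows by dominated convergence from $F(s)/s^2 \to 0$ as $s \to 0$ (a consequence of $f(s)/|s|\to 0$ in (f1)) together with $u \in L^\infty(\cG)$, while the limit at $+\infty$ follows by Fatou's lemma applied to the monotone nonnegative integrands, since $F(tu(x))/t^2 \to +\infty$ on the positive-measure set $\{u \ne 0\}$.

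The most substantial step is (I4). By (f1), the function $f=F'$ satisfies the hypothesis of Remark~\ref{I4_practice}, so $F$ obeys the pointwise inequality
$$F(a)-F((1+s)a+b)+f(a)\bigl[s(s/2+1)a+(1+s)b\bigr]\le 0 \qquad \text{for all } a,b\in\R,\ s\ge -1,$$
with strict inequality whenever $b=0$ and $s \ne 0$. Applying this pointwise with $a=u(x)$, $b=v(x)$ for $u,v \in E$ and integrating over $\cG$ gives (I4); when $v \equiv 0$ and $s \ne 0$, the pointwise inequality is strict on the positive-measure set $\{u \ne 0\}$, which upgrades the integrated inequality to a strict one. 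The main technical obstacle is really only in (I1), where the compact embedding $E \hookrightarrow L^\infty(\cG)$ must be invoked; all other properties reduce to integrating pointwise statements about $F$ that are already available from Proposition~\ref{prof_f} and Remark~\ref{I4_practice}.
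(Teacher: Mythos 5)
Your proof is correct and follows essentially the same strategy as the paper, which disposes of all four properties in two sentences by citing the compact embedding $E \hookrightarrow L^\infty(\cG)$ for (I1), the sign of $F$ for (I2), Proposition~\ref{prof_f}(3) for (I3), and Remark~\ref{I4_practice} for (I4). You simply spell out the pointwise-to-integral reduction and the positivity-of-measure argument for the strict inequality in (I4), all of which are the details the paper leaves implicit.
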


\begin{proof}
    The hypothesis (I1) holds since $F$
    is continuous and since weak convergence in $E$
    implies strong $L^\infty(\cG)$ convergence.
    Assumption (I2) follows from the sign of $F$,
    (I3) follows from point \ref{prop_f_3}
    of Proposition~\ref{prof_f} and (I4) from Remark~\ref{I4_practice}.
\end{proof}

\subsection{Weyl's law, large spectral gaps
    and large $L^\infty$ norms}
The following spectral property is crucial
to establish Theorem~\ref{graph_mult}.
\begin{prop}
    \label{large_spectral_gaps}
    Let $\cG$ is a compact metric graph
    with a self-adjoint Laplacian $(D(A), A)$.
    Then, $A$ has arbitrarily large spectral gaps,
    i.e. for all $R > 0$ there exists $i \ge 1$
    such that $\lambda_{i+1} - \lambda_i \ge R$.
\end{prop}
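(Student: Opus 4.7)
The plan is to prove the statement by contradiction, exploiting the quadratic growth of eigenvalues given by Weyl's law for self-adjoint Laplacians on compact metric graphs (as recorded in \cite{BolEnd}). According to Weyl's law in this setting, if $L$ denotes the total length of $\cG$, then
\begin{equation*}
\lambda_n = \frac{\pi^2}{L^2} n^2 \bigl(1 + o(1)\bigr) \qquad \text{as } n \to \infty,
\end{equation*}
and in particular $\lambda_n / n \to +\infty$. The crucial point is that this asymptotic holds for \emph{every} self-adjoint realization of the Laplacian on $\cG$, regardless of the vertex conditions in $D(A)$, so the argument will not require any case distinction with respect to the chosen boundary conditions.

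Next, I would suppose by contradiction that the conclusion fails: there exists $R > 0$ such that $\lambda_{i+1} - \lambda_i < R$ for every $i \ge 1$. A straightforward induction then gives
\begin{equation*}
\lambda_n \le \lambda_1 + (n-1) R \qquad \text{for every } n \ge 1,
\end{equation*}
so $\lambda_n / n$ remains bounded as $n \to \infty$. This directly contradicts the quadratic growth from Weyl's law, which forces $\lambda_n / n \to +\infty$. Consequently, for every $R > 0$ there must exist some index $i \ge 1$ with $\lambda_{i+1} - \lambda_i \ge R$, as claimed.

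The only potential obstacle here is ensuring that Weyl's law is truly available in the full generality of arbitrary self-adjoint vertex conditions, and not just for Kirchhoff or Dirichlet conditions; this is precisely what is provided by the result of Bolte and Endres in \cite{BolEnd}, which the excerpt already cites for this purpose. Beyond invoking that reference, the argument is completely elementary and requires no further graph-theoretic or spectral input. If one wished to avoid Weyl's law altogether, one could alternatively sum the reciprocals $\sum_k 1/\lambda_k$ (which converges by $\lambda_k \sim k^2$) and observe that uniformly bounded gaps would force $\sum 1/\lambda_k = +\infty$, but the direct linear/quadratic comparison above is cleaner.
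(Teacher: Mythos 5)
Your proof is correct and takes essentially the same route as the paper: both invoke the Weyl asymptotics from \cite{BolEnd}, valid for arbitrary self-adjoint vertex conditions on a compact metric graph, and derive the existence of arbitrarily large spectral gaps as a consequence. The paper states the counting-function form $N(K) \sim \frac{L}{\pi}\sqrt{K}$ and says the claim ``readily follows''; you have simply made that deduction explicit by rewriting Weyl's law as $\lambda_n \sim \frac{\pi^2}{L^2} n^2$ and observing that uniformly bounded gaps would force $\lambda_n$ to grow at most linearly in $n$, a contradiction.
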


\begin{proof}
    This follows from Weyl's law for eigenvalues of $A$,
    valid for arbitrary self-adjoint realizations
    of the Laplacian on a compact metric graph. Indeed, as stated in \cite[Proposition 4.2]{BolEnd}, the eigenvalue counting function
    $$
    K \mapsto N(K) = \# \{ i  \::\: \lambda_i \le K\}
    $$
    satisfies the asymptotic law
    $$
    N(K) \sim \frac{L}{\pi} \sqrt{K} \qquad \text{as $K \to + \infty$,}
    $$
    where $L$ is the total length of the graph. From this estimate, the claim readily follows.
\end{proof}

Exploiting large spectral gaps, we are able
to obtain solutions with a large $L^\infty$ norm.
\begin{prop}
    \label{large_L_infty}
    Let $\cG$ is a compact metric graph
    with a self-adjoint Laplacian $(D(A), A)$
    and $f \in \mathcal{C}(\R, \R)$ satisfying (f1).
    Then, for every
    $\lambda \in \intervaloo{\lambda_n, \lambda_{n+1}}$
    and any Nehari-Pankov solution $u \in Q_{\lambda}$
    (recall \eqref{eq:def-Q-lambda}), we have
    \begin{equation*}
        \| u \|_{L^\infty(\cG)}
        \ge \tilde{g}^{-1}(\lambda_{n+1}-\lambda),
    \end{equation*}
    where
    \begin{equation*}
        \tilde{g}: \intervalco{0, +\infty}
        \to \intervalco{0, +\infty}:
        t \mapsto
        \left\{
            \begin{aligned}
              &\frac{\max(f(t),-f(-t))}{|t|} , &&\qquad \text{$t>0$,}\\
              &0, &&\qquad \text{$t=0$}  
            \end{aligned}
        \right.
    \end{equation*}
    is a strictly increasing bijection.
\end{prop}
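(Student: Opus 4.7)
The plan is to exploit the fact that any Nehari--Pankov solution $u \in Q_\lambda$ is a nonzero critical point of $\cJ_\lambda$ by Theorem~\ref{main-abstract-result-existence-lambda}, and hence satisfies $-u'' = \lambda u + f(u)$ on every edge with the vertex conditions from $D(A)$. I would rewrite this as a linear eigenvalue equation $(A - V_u)u = \lambda u$ for the nonnegative multiplication potential
$$
V_u(x) := \begin{cases} f(u(x))/u(x), & u(x) \ne 0, \\ 0, & u(x) = 0, \end{cases}
$$
and exploit the strict inequality $\lambda > \lambda_n$ to force a lower bound on $\|V_u\|_{L^\infty(\cG)}$, which is then converted into an $L^\infty$ lower bound on $u$ via the definition of $\tilde g$.

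First, I would verify that $\tilde g$ is a strictly increasing continuous bijection of $[0,\infty)$ onto itself. By (f1), both $t \mapsto f(t)/t$ and $t \mapsto -f(-t)/t$ are strictly increasing, continuous and positive on $(0,\infty)$, vanishing as $t \to 0^+$ and diverging as $t \to +\infty$; their pointwise maximum $\tilde g$ inherits the same properties and extends continuously by $\tilde g(0) = 0$.

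Next, since $u$ is continuous on each edge and $f(s)/|s| \to 0$ as $s \to 0$ by (f1), $V_u$ is continuous and bounded on $\cG$ and nonnegative (because $f(s)/s > 0$ for $s \ne 0$). The operator $A - V_u$ is therefore a bounded symmetric perturbation of $A$: it is self-adjoint on $D(A)$ with compact resolvent, so it has a nondecreasing sequence of eigenvalues $\lambda_k(u)$ which, by $V_u \ge 0$ and the min-max principle, satisfy $\lambda_k(u) \le \lambda_k$ for every $k$. Since $u$ is a nonzero eigenfunction with eigenvalue $\lambda$, one has $\lambda = \lambda_k(u)$ for some $k$, and the inequalities $\lambda > \lambda_n \ge \lambda_n(u)$ force $k \ge n+1$, whence $\lambda_{n+1}(u) \le \lambda$. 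Applying the max-min characterization with the codimension-$n$ trial space $E_n^\perp$ then gives
$$
0 \;\ge\; \inf_{v \in E_n^\perp \setminus \{0\}} \frac{q_\lambda(v) - \int_\cG V_u v^2 \intd x}{\|v\|_{L^2(\cG)}^2} \;\ge\; (\lambda_{n+1} - \lambda) - \|V_u\|_{L^\infty(\cG)},
$$
where the last step uses $q_\lambda(v) \ge (\lambda_{n+1}-\lambda)\|v\|_{L^2(\cG)}^2$ on $E_n^\perp$ together with a trivial $L^\infty$--$L^1$ bound on the integral.

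Finally, inspecting the two cases $u(x) > 0$ and $u(x) < 0$, one has $V_u(x) \le \tilde g(|u(x)|)$ pointwise, hence $\|V_u\|_{L^\infty(\cG)} \le \tilde g(\|u\|_{L^\infty(\cG)})$ by monotonicity of $\tilde g$. Combining with the previous display yields $\lambda_{n+1} - \lambda \le \tilde g(\|u\|_{L^\infty(\cG)})$, and applying $\tilde g^{-1}$ gives the claim. The main delicate step is the index identification $\lambda_{n+1}(u) \le \lambda$, which relies on the nonnegativity of $V_u$ and mirrors the reasoning used in Proposition~\ref{eq-abstract-L2-est}; the rest is a direct $L^\infty$ adaptation of that argument to the metric graph setting.
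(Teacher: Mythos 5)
Your proof is correct and follows essentially the same approach as the paper: view $u\in Q_\lambda$ as a $\lambda$-eigenfunction of the Schr\"odinger operator $A-V_u$ with $V_u\ge 0$, use the min--max principle to identify $\lambda$ as an eigenvalue of index $\ge n+1$ (so $\lambda_{n+1}(u)\le\lambda$), and then bound $\lambda_{n+1}-\lambda$ by $\|V_u\|_{L^\infty}\le\tilde g(\|u\|_{L^\infty})$. The only cosmetic difference is that the paper deduces the last bound from the general eigenvalue-shift estimate $|\lambda_{n+1}(u)-\lambda_{n+1}|\le\|V_u\|_{L^\infty}$, whereas you run the Courant--Fischer quotient on the concrete trial space $E_n^\perp$ (as in Proposition~\ref{eq-abstract-L2-est}); both are valid and lead to the same chain of inequalities.
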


\begin{proof}
The fact that $\tilde{g}$ is a strictly increasing bijection follows from (f1). For $u \in E$, we define 
    $$
    V_u \in L^\infty(\cG; \R), \qquad V_u(x)=         \left\{
            \begin{aligned}
              &\frac{f(u(x))}{u(x)}, &&\qquad \text{$u(x)\ne 0$,}\\
              &0, && \qquad \text{$u(x)=0$}  
            \end{aligned}
          \right.
    $$
    Moreover, for $n \in \N$, we let $\lambda_n(u)$
    denote the $n$-th eigenvalue
    of the self-adjoint Schrödinger operator $A-V_u$
    which is also self-adjoint with domain $D(A)$
    and form domain $E$. Hence $\lambda_n(u)$
    is given by the minimax characterization
    \begin{equation}
        \label{eq:minimax-lambda-n-u}
        \lambda_n(u) = \inf_{\stackrel{E' \subset E}{\dim E' \ge n}}\sup_{v \in E' \setminus \{0\}}\frac{1}{\|v\|_H^2}\Bigl(\langle A^{1/2}v, A^{1/2}v\rangle - \int_{\cG}V_u v^2\intd x\Bigr).
    \end{equation}
    Since $\lambda_n(0)=\lambda_n$ and
    $$
    \frac{1}{\|v\|_H^2}\Bigl|\int_{\cG}V_u v^2\intd x\Bigr|
    \le \|V_u\|_{L^\infty(\cG)}
    \qquad \text{for every $u \in E$, $v \in E \setminus \{0\}$,}
    $$
    it follows from (\ref{eq:minimax-lambda-n-u}) that
    $$
    |\lambda_n(u)-\lambda_n|
    \le \|V_u\|_{L^\infty(\cG)}
    \qquad \text{for every $u \in E$.}
    $$
    In the following, we assume that $u \in Q_\lambda$,
    so $u$ is a least action solution
    of (\ref{eq:nonlinear-wave-intro}).
    Then $u$ is an eigenfunction
    of the operator $A-V_u$ corresponding
    to the eigenvalue $\lambda$.
    Moreover, since $V_u \ge 0$ and $\lambda >\lambda_n$,
    it follows from (\ref{eq:minimax-lambda-n-u})
    that $\lambda = \lambda_{k}(u)$ for some $k \ge n+1$,
    and therefore
    \begin{equation*}
        \lambda_{n+1}-\lambda
        \le \lambda_{n+1}-\lambda_{n+1}(u)
        \le \|V_u\|_{L^\infty(\cG)}
        \le \tilde{g}(\| u \|_{L^\infty(\cG)} ).
    \end{equation*}
    Here, the last inequality follows from the pointwise bound $|V_u(x)| \le \tilde g(|u(x)|)$ combined with the monotonicity of $\tilde g$. This proves the claim.
\end{proof}

Now, a very convenient fact arises: under (f1) and (f2),
we are able to obtain an ``$L^\infty$ to $L^2$'' estimate
for \emph{solutions} of the differential problem,
as shown in the following section.
As we will see, our argument operates on individual edges
and is thus completely independent
of the particular self-adjoint
realization of the Laplacian under study.

\subsection{ODE arguments and the $L^\infty$ to $L^2$ estimate}
In this section, we assume that (f1) holds.

We remark that the ODE
\begin{equation}
    \label{graph_ODE}
    -u'' = \lambda u + f(u)
\end{equation}
is the equation of motion in the potential well
$V_{\lambda}(u)
:= F(u) + \frac{\lambda}{2} |u|^2$
since the equation reads $u'' = -V_{\lambda}'(u)$.
Hence, for any solution $u$ of the ODE,
the \emph{ODE energy} of $u$, given here by
$H_u := \tfrac{1}{2} (u')^2 + V_{\lambda}(u)$
is constant with respect to time.
Moreover, all solutions of the ODE exist globally on $\R$ and are periodic.

\medbreak
In the following, given $M > 0$,
we call $u_{\lambda, M}$ the solution of the Cauchy problem
\begin{equation*}
    \begin{cases}
        -u'' = \lambda u + f(u),\\
        u(0) = M, u'(0) = 0.
    \end{cases}
\end{equation*}
Since $\lambda \ge 0$, $V_{\lambda}(u)$ is decreasing
on $\intervaloc{-\infty, 0}$
and increasing on $\intervalco{0, +\infty}$.
Thus, $u_{\lambda, M}$ is up to time translations the unique
solution of the ODE whose maximum value is equal to $M$.
It is $\tau_{\lambda, M}$-periodic,
and we denote by $m_{\lambda, M} > 0$
the opposite of the minimum value of $u_{\lambda, M}$.
We remark that $V_{\lambda}(M) = V_{\lambda}(-m_{\lambda, M})$,
this quantity being equal to the ODE energy of $u_{\lambda, M}$.

\medbreak
We will split our ODE reasoning (which generalizes \cite[Lemma 3.3]{CGJT})
in a series of lemmas.
\begin{lem}
    \label{bound_L2_norm_ODE}
    Let $\lambda \ge 0$. Then, for any $M > 0$, we have
    \begin{equation}
        \label{estimates_L_infinity_norm_ODE}
        \frac{\tau_{\lambda, M}}{8} \min(m_{\lambda, M}, M)^2
        \le \int_0^{\tau_{\lambda, M}} |u_{\lambda, M}(x)|^2 \intd x
        \le \tau_{\lambda, M} \max(m_{\lambda, M}, M)^2.
    \end{equation}
\end{lem}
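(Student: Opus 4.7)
The upper bound in \eqref{estimates_L_infinity_norm_ODE} is immediate: since $u_{\lambda, M}$ takes values in $[-m_{\lambda, M}, M]$, we have $|u_{\lambda, M}(x)|^2 \le \max(m_{\lambda, M}, M)^2$ pointwise, and integration over $[0, \tau_{\lambda, M}]$ yields the claim.

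For the lower bound, the plan is to exploit the conservation of the ODE energy $H_u = \tfrac{1}{2}(u')^2 + V_\lambda(u) \equiv E$, where $E := V_\lambda(M) = V_\lambda(-m_{\lambda, M})$. On the half-period $[0, \tau_{\lambda, M}/2]$, Proposition~\ref{prof_f} together with $\lambda \ge 0$ shows that $V_\lambda'(u) = f(u) + \lambda u$ has the sign of $u$, so $u_{\lambda, M}$ decreases strictly from $M$ to $-m_{\lambda, M}$, which allows us to parametrize time by position: $\intd t = \intd u/|u'(u)|$ with $|u'(u)| = \sqrt{2(E - V_\lambda(u))}$. Setting $K := \min(m_{\lambda, M}, M)$, the key step will be to show that the measure of $\{t \in [0, \tau_{\lambda, M}] \mid |u_{\lambda, M}(t)| \le K/2\}$ does not exceed $\tau_{\lambda, M}/2$; once this is done, the complementary set has measure at least $\tau_{\lambda, M}/2$ and $|u_{\lambda, M}|^2 \ge K^2/4$ there, which gives the desired bound.

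To control the "central" time, I would split $[0, \tau_{\lambda, M}/2]$ into three consecutive subintervals of durations
\begin{equation*}
    t_1 = \int_{K/2}^{M} \frac{\intd u}{|u'(u)|},
    \quad
    t_2 = \int_{-K/2}^{K/2} \frac{\intd u}{|u'(u)|},
    \quad
    t_3 = \int_{-m_{\lambda,M}}^{-K/2} \frac{\intd u}{|u'(u)|},
\end{equation*}
corresponding to the phases where $u_{\lambda, M}$ lies in $[K/2, M]$, $[-K/2, K/2]$, and $[-m_{\lambda, M}, -K/2]$, respectively. Using that $V_\lambda$ is increasing on $[0, +\infty)$ and decreasing on $\intervaloc{-\infty, 0}$, together with the defining choice $K \le \min(m_{\lambda, M}, M)$, I will establish the bounds
\begin{equation*}
    t_1 \ge \frac{K/2}{\sqrt{2(E - V_\lambda(K/2))}},
    \qquad
    t_3 \ge \frac{K/2}{\sqrt{2(E - V_\lambda(-K/2))}},
\end{equation*}
and
\begin{equation*}
    t_2 \le \frac{K/2}{\sqrt{2(E - V_\lambda(K/2))}} + \frac{K/2}{\sqrt{2(E - V_\lambda(-K/2))}},
\end{equation*}
by splitting the $t_2$ integral at $u = 0$. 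Comparing term by term gives $t_2 \le t_1 + t_3$, which combined with $t_1 + t_2 + t_3 = \tau_{\lambda, M}/2$ yields $t_2 \le \tau_{\lambda, M}/4$. The symmetry of the trajectory on the second half-period then doubles this to $2 t_2 \le \tau_{\lambda, M}/2$, completing the reduction.

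The main subtlety is the term-by-term matching between the upper bound for $t_2$ and the lower bounds for $t_1, t_3$: this would fail if $\min(m_{\lambda, M}, M)$ were replaced by the maximum, since then the outer intervals $[K/2, M]$ or $[-m_{\lambda, M}, -K/2]$ could even be empty, and the lower bounds for $t_1$ and $t_3$ could no longer absorb the two components of $t_2$. That the inequalities match \emph{exactly} up to the factor $K/2$ is the pleasant feature of the $K/2$-threshold, and it is what produces the clean constant $1/8$ in \eqref{estimates_L_infinity_norm_ODE}.
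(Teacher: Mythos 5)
Your proof is correct and rests on the same mechanism as the paper's: energy conservation forces $|u'|$ to be small near the turning points, so the orbit spends at least half the period in a region where $|u| \ge \min(m_{\lambda,M}, M)/2$, giving the factor $1/8$. The organization differs slightly. The paper compares, on the \emph{same} side of $0$, the equal-length intervals $[0, M/2]$ vs.\ $[M/2, M]$ (and $[-m_{\lambda,M}/2, 0]$ vs.\ $[-m_{\lambda,M}, -m_{\lambda,M}/2]$): since $|u'| = \sqrt{2(H - V_\lambda(u))}$ is pointwise no larger on the outer piece than on the inner piece, more time is spent in the outer pieces, and the set $A$ of ``outer'' times immediately has $|A| \ge \tau_{\lambda,M}/2$ with no further length comparison needed. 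You instead fix a single uniform band $[-K/2, K/2]$ with $K = \min(m_{\lambda,M},M)$ and bound $t_1$, $t_3$ from below and the two halves of $t_2$ from above by the same quantities $\tfrac{K/2}{\sqrt{2(E - V_\lambda(\pm K/2))}}$; this requires the additional observations $M - K/2 \ge K/2$ and $m_{\lambda,M} - K/2 \ge K/2$, which is exactly the role of the $\min$ that you flag at the end. Both routes arrive at the identical constant: the paper's side-dependent thresholds $M/2$ and $m_{\lambda,M}/2$ avoid the extra length comparisons, while your uniform band makes the final step (``$|u|\ge K/2$ on a set of measure $\ge \tau_{\lambda,M}/2$'') slightly more transparent.
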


\begin{proof}
    Let $H = V_{\lambda}(M)$ be the ODE
    energy of $u_{\lambda, M}$.
    Since $V_{\lambda}(u)$ is decreasing
    on $\intervaloc{-\infty, 0}$
    and increasing on $\intervalco{0, +\infty}$,
    we obtain that
    \begin{equation*}
        \tfrac{1}{2} (u_{\lambda, M}'(x))^2
        \begin{cases}
            \le H - V_{\lambda}(M/2)
            &\text{for all } x \in \intervalcc{0, \tau_{\lambda, M}}
            \text{ s.t.\,} M/2 \le u(x),\\
            \ge H - V_{\lambda}(M/2)
            &\text{for all } x \in \intervalcc{0, \tau_{\lambda, M}}
            \text{ s.t.\,} 0 \le u(x) \le M/2,\\
            \le H - V_{\lambda}(-m_{\lambda, M}/2)
            &\text{for all } x \in \intervalcc{0, \tau_{\lambda, M}}
            \text{ s.t.\,} u(x) \le -m_{\lambda, M}/2,\\
            \ge H - V_{\lambda}(-m_{\lambda, M}/2)
            &\text{for all } x \in \intervalcc{0, \tau_{\lambda, M}}
            \text{ s.t.\,} -m_{\lambda, M}/2 \le u(x) \le 0.
        \end{cases}
    \end{equation*}
    Therefore, a particle in the potential
    well always has a smaller
    speed (in absolute value) when going through the region
    $\intervalcc{-m_{\lambda, M}, -m_{\lambda, M}/2}$
    than when going through
    the region $\intervalcc{m_{\lambda, M}/2, 0}$.
    Similarly, it has a smaller
    speed (in absolute value) when going through the region
    $\intervalcc{M/2, M}$
    than when going through
    the region $\intervalcc{0, M/2}$.
    We deduce that
    \begin{equation*}
        |A| \ge \tfrac{1}{2}\tau_{\lambda, M}
        \qquad \text{where }
        A := \bigl\{ x \in \intervalcc{0, \tau_{\lambda, M}}
        \bigm| u_{\lambda, M}(x) \le -m_{\lambda, M}/2
        \text{ or } M/2 \le u(x)
        \bigr\}
    \end{equation*}
    as the particle spends at least half its time in the zone
    where it has a slower speed.
    
    Regarding the inequalities in \eqref{estimates_L_infinity_norm_ODE},
    the upper bound is trivial
    and the lower bound follows from the inequalities
    \begin{equation*}
        \int_0^{\tau_{\lambda, M}} |u_{\lambda, M}(x)|^2 \intd x
        \ge \int_{A} |u_{\lambda, M}(x)|^2 \intd x
        \ge \tfrac{1}{4} \min(m_{\lambda, M}, M)^2 \, |A|
        \ge \frac{\tau_{\lambda, M}}{8} \min(m_{\lambda, M}, M)^2.
        \qedhere
    \end{equation*}
\end{proof}

For the next lemma, we will make use of assumption (f2)
defined before the statement of Theorem~\ref{graph_mult}.

\begin{lem}
    \label{comparison_m_M}
    Let $f \in \mathcal{C}(\R, \R)$ satisfy (f1) and (f2).
    Then, for all $\lambda \ge 0$ and all $M > 0$,
    assuming that one has either $M \ge \kappa_0 s_0$
    or $m_{\lambda, M} \ge \kappa_0 s_0$, then
    \begin{equation*}
        \min(m_{\lambda, M}, M)
        \ge \frac{1}{\kappa_0} \max(m_{\lambda, M}, M).
    \end{equation*}
\end{lem}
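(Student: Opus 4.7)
The plan is to exploit the conservation of ODE energy, which gives the key identity
\[
F(M) + \frac{\lambda}{2} M^2 \;=\; F(-m) + \frac{\lambda}{2} m^2,
\]
where I abbreviate $m := m_{\lambda, M}$; this is just the equality $V_\lambda(M) = V_\lambda(-m)$ already noted above. Since $\lambda \ge 0$, it immediately gives $F(-m) \ge F(M)$ when $M \ge m$, and $F(M) \ge F(-m)$ when $m \ge M$. Combining this with the strict monotonicity of $F$ on each of $[0,+\infty)$ and $(-\infty,0]$ (a consequence of (f1), see Proposition~\ref{prof_f}) and with the approximate-homogeneity estimates of (f2), I will derive a contradiction from the assumption $\max(m, M) > \kappa_0 \min(m, M)$. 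A useful preliminary observation is that in both cases below, the standing hypothesis ``$M \ge \kappa_0 s_0$ or $m \ge \kappa_0 s_0$'' together with $\kappa_0 > 1$ forces the \emph{larger} of $m, M$ to be at least $\kappa_0 s_0$: indeed, if $M > \kappa_0 m$ and $m \ge \kappa_0 s_0$, then $M > \kappa_0^2 s_0 \ge \kappa_0 s_0$, and symmetrically in the other case.

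I would then split into two cases. In Case 1, $M > \kappa_0 m$, so $M \ge \kappa_0 s_0$, and energy conservation gives $F(-m) \ge F(M)$. If $m \ge s_0$, the right inequality in (f2) at $s = m$ yields $F(-m) \le F(\kappa_0 m)$, and since $\kappa_0 m < M$ the strict monotonicity of $F$ on $[0,+\infty)$ gives $F(\kappa_0 m) < F(M)$, contradicting $F(-m) \ge F(M)$. If instead $m < s_0$, I use the strict monotonicity of $F$ on $(-\infty, 0]$ to write $F(-m) < F(-s_0)$, then bridge via (f2) at $s = s_0$ and the bound $M \ge \kappa_0 s_0$ to get $F(-s_0) \le F(\kappa_0 s_0) \le F(M)$; again $F(-m) < F(M)$, a contradiction. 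In Case 2, $m > \kappa_0 M$, so $m \ge \kappa_0 s_0 \ge s_0$, and energy conservation gives $F(M) \ge F(-m)$. The left inequality in (f2) at $s = m$ yields $F(m/\kappa_0) \le F(-m)$, and since $M < m/\kappa_0$, strict monotonicity of $F$ on $[0,+\infty)$ gives $F(M) < F(m/\kappa_0) \le F(-m)$, a contradiction.

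The only mildly subtle step is the subcase $m < s_0$ inside Case~1, where (f2) cannot be applied directly at $s = m$; the fix is to use (f2) as a ``bridge'' at $s = s_0$, exploiting that $F(-\cdot)$ is increasing and that $M$ already sits above the threshold $\kappa_0 s_0$. Case~2 is actually easier, because the hypothesis always places $m$ itself above the threshold $\kappa_0 s_0 \ge s_0$, so (f2) applies to $m$ with no case split.
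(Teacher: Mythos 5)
Your proof is correct, but it takes a noticeably different route from the paper's. Both arguments start from the energy identity $V_\lambda(M) = V_\lambda(-m_{\lambda,M})$, and both exploit the sign $\lambda \ge 0$ and the monotonicity of $F$ on each half-line. The paper, however, keeps everything at the level of $V_\lambda$: from (f2) it gets a two-sided inequality of the form $V_\lambda(\cdot) \le V_\lambda(-\cdot) \le V_\lambda(\cdot)$ (adding the ordered quadratic terms), and then uses the strict monotonicity of $V_\lambda$ on each half-line together with $V_\lambda(M)=V_\lambda(-m)$ to read off $m/\kappa_0 \le M \le \kappa_0 m$ (or its mirror) \emph{directly}, with no contradiction. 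Crucially, in each of its two cases the paper applies (f2) only at a threshold tied to the \emph{larger} of $m,M$ --- at $s=m$ when $m>M$, and at $s=M/\kappa_0$ and $s=\kappa_0 M$ when $M \ge m$ --- which is always $\ge s_0$ by the standing hypothesis, so no further case split is needed. Your argument instead drops to the level of $F$ by first using $\lambda\ge 0$ to compare $F(M)$ and $F(-m)$, and then contradicting that comparison via (f2) and monotonicity. The price is the subcase $m < s_0$ in your Case~1, which you handle correctly by bridging through $F(-s_0) \le F(\kappa_0 s_0) \le F(M)$. It's worth noting this subcase is avoidable even within your scheme: since $M \ge \kappa_0 s_0$, you may apply (f2) at $s = M/\kappa_0 \ge s_0$ to get $F(-M/\kappa_0) \le F(M)$, and then $m < M/\kappa_0$ together with strict monotonicity of $F$ on $(-\infty,0]$ gives $F(-m) < F(-M/\kappa_0) \le F(M)$ without splitting on the size of $m$. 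So the structural trade-off is: the paper's $V_\lambda$-level argument gives the conclusion in one stroke per case, while your $F$-level contradiction argument is a bit more elementary but, as written, requires one extra subcase that a slightly different choice of where to apply (f2) would have eliminated.
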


\begin{proof}
    We distinguish two cases.
    
    \medbreak
    \noindent \textit{Case 1: $m_{\lambda, M} > M$.}
    \medbreak
    
    In this case, one has
    $m_{\lambda, M} \ge \kappa_0 s_0  > s_0$,
    so that (f2) gives
    \begin{equation*}
        F\Bigl(\frac{m_{\lambda, M}}{\kappa_0}\Bigr) 
        \le F(-m_{\lambda, M})
        \le F( \kappa_0 m_{\lambda, M}).
    \end{equation*}
    Since $\lambda \ge 0$, we thus obtain
    \begin{equation*}
        V_{\lambda}\Bigl(\frac{m_{\lambda, M}}{\kappa_0}\Bigr)
        \le V_{\lambda}(-m_{\lambda, M})
        \le V_{\lambda}(\kappa_0 m_{\lambda, M}).
    \end{equation*}
    Since $V_{\lambda}$ is strictly increasing on $\intervalco{0, +\infty}$
    and that $V_{\lambda}(-m_{\lambda, M}) = V_{\lambda}(M)$,
    we deduce that
    \begin{equation*}
        \frac{m_{\lambda, M}}{\kappa_0}
        \le M \le \kappa_0 m_{\lambda, M}.
    \end{equation*}
    Consequently,
    \begin{equation*}
        \min(m_{\lambda, M}, M) = M
        \ge \frac{m_{\lambda, M}}{\kappa_0}
        = \frac{1}{\kappa_0} \max(m_{\lambda, M}, M).
    \end{equation*}
    
    \medbreak
    \noindent \textit{Case 2: $m_{\lambda, M} \le M$.}
    \medbreak
    
    The reasoning is similar. Now, we have
    $M \ge \kappa_0 s_0$, so that (f2) gives 
    \begin{equation*}
        F\Bigl(-\frac{M}{\kappa_0}\Bigr)
        \le F(M) \le F(-\kappa_0 M),
    \end{equation*}
    hence
    \begin{equation*}
        V_{\lambda}\Bigl(-\frac{M}{\kappa_0}\Bigr)
        \le V_{\lambda}(M) \le V_{\lambda}(-\kappa_0 M).
    \end{equation*}
    Since $V_{\lambda}$ is decreasing
    on $\intervaloc{-\infty, 0}$
    and that $V_{\lambda}(-m_{\lambda, M}) = V_{\lambda}(M)$,
    we deduce that
    \begin{equation*}
        -\kappa_0 M \le -m_{\lambda, M} \le -\frac{M}{\kappa_0}.
    \end{equation*}
    Consequently,
    \begin{equation*}
        \min(m_{\lambda, M}, M)
        = m_{\lambda, M}
        \ge \frac{M}{\kappa_0} = \frac{1}{\kappa_0} \max(m_{\lambda, M}, M). \qedhere
    \end{equation*}
\end{proof}

\begin{lem}
    \label{large_norm_to_small_tau}
    For every $\eps > 0$, there exists $s_1 > 0$
    such that for all $\lambda \ge 0$ and for all
    solutions with $M \ge s_1$ and $m_{\lambda, M} \ge s_1$,
    one has $\tau_{\lambda, M} \le \eps$.
\end{lem}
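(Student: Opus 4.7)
The plan is to use conservation of the ODE energy together with the symmetry of the orbit between the turning points $M$ and $-m$ (writing $m := m_{\lambda, M}$ throughout) to express the period as
\begin{equation*}
    \tau_{\lambda,M} = 2 \int_{-m}^{M} \frac{du}{\sqrt{2\bigl(V_\lambda(M) - V_\lambda(u)\bigr)}}.
\end{equation*}
The key observation making the whole argument uniform in $\lambda \ge 0$ is the elementary bound $V_\lambda(M) - V_\lambda(u) = \bigl(F(M) - F(u)\bigr) + \tfrac{\lambda}{2}(M^2 - u^2) \ge F(M) - F(u)$ for $u \in [0,M]$, and analogously $V_\lambda(M) - V_\lambda(u) = V_\lambda(-m) - V_\lambda(u) \ge F(-m) - F(u)$ for $u \in [-m, 0]$; this lets one discard the $\lambda$-dependent term in the denominator entirely.

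Next I would split the integral at $M/2$, $0$ and $-m/2$ into four pieces. Note first that (f1) implies that $f$ is strictly increasing on $(0, +\infty)$ (as $f(s) = s \cdot (f(s)/s)$ is a product of positive strictly increasing factors) and, dually, $|f|$ is strictly decreasing on $(-\infty, 0)$. On the bulk pieces $[0, M/2]$ and $[-m/2, 0]$, monotonicity of $F$ on each half-line (Proposition~\ref{prof_f}) yields the uniform lower bounds $F(M) - F(u) \ge F(M) - F(M/2) \ge (M/2) f(M/2)$ and $F(-m) - F(u) \ge (m/2)|f(-m/2)|$, producing integral pieces bounded respectively by $\tfrac{1}{2}\sqrt{M/f(M/2)}$ and $\tfrac{1}{2}\sqrt{m/|f(-m/2)|}$. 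On the endpoint pieces $[M/2, M]$ and $[-m, -m/2]$ the integrand has an integrable singularity at the turning point; there, the monotonicity of $f$ (respectively $|f|$) gives the sharper \emph{linear} lower bounds $F(M) - F(u) = \int_u^M f(s)\,ds \ge f(M/2)(M-u)$ and $F(-m) - F(u) \ge |f(-m/2)|(u+m)$, and a direct computation of the resulting singular integrals yields the bounds $\sqrt{M/f(M/2)}$ and $\sqrt{m/|f(-m/2)|}$ respectively.

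Summing the four contributions would give
\begin{equation*}
    \tau_{\lambda, M} \le 3\sqrt{M/f(M/2)} + 3\sqrt{m/|f(-m/2)|},
\end{equation*}
uniformly in $\lambda \ge 0$. By (f1), $f(s)/s \to +\infty$ as $s \to +\infty$ and $|f(s)|/|s| \to +\infty$ as $s \to -\infty$, so both terms can be forced below $\eps/2$ by choosing $s_1$ large enough and then restricting to $M, m \ge s_1$, which is exactly the assertion of the lemma. The main obstacle to anticipate is the integrable singularity at the two turning points $u = M$ and $u = -m$, where $V_\lambda(M) - V_\lambda(u)$ vanishes and the bulk estimate alone is useless; the linear-in-distance lower bound coming from the monotonicity of $f$ is precisely what lets one integrate through this singularity and recover a quantitative rate dictated by the superlinear growth of $f$ at infinity.
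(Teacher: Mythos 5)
Your proof is correct, and it takes a genuinely different route from the paper's. The paper performs the change of variables $u = Mt$ on $[0,M]$ and invokes, in one stroke, the quadratic bound $F(Mt) \le t^2 F(M)$ for $t \in [0,1]$ (i.e.\ the monotonicity of $t \mapsto F(tu)/t^2$ from Proposition~\ref{prof_f}). After dropping the $\lambda$-term this reduces the integral to $\frac{M}{\sqrt{F(M)}}\int_0^1 (1-t^2)^{-1/2}\,dt = \frac{\pi M}{2\sqrt{F(M)}}$, and the superlinearity $F(M)/M^2 \to \infty$ closes the argument; the arcsin integral automatically absorbs the turning-point singularity because $1-t^2 \approx 2(1-t)$ near $t=1$. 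You instead split $[0,M]$ at $M/2$, use the crude constant bound $F(M)-F(u) \ge F(M)-F(M/2) \ge (M/2)f(M/2)$ on the bulk and the linear-in-distance bound $F(M)-F(u) \ge f(M/2)(M-u)$ near the turning point, both direct consequences of the monotonicity of $f$ (itself an easy consequence of (f1), as you correctly note). Both approaches produce a uniform-in-$\lambda$ bound of the same quality since $f(M/2)/(M/2) \to \infty$ is equivalent in strength to $F(M)/M^2 \to \infty$. The paper's version is a bit slicker — one inequality, one classical integral — whereas yours is more elementary and makes the role of the turning-point singularity, and the exact mechanism by which superlinearity tames it, more explicit.
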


\begin{proof}
    We call
    $H_M := V_{\lambda}(M) = V_{\lambda}(-m_{\lambda, M})$.
    It is a standard fact (see e.g.\,\cite[p.\,18]{Ar})
    that the period is given by
    \begin{equation}
        \label{expression_tau}
        \tau_{\lambda, M}
        = 2 \int_{-m_{\lambda, M}}^{M}
        \frac{\intd u}{\sqrt{2(H_M - V_{\lambda}(u))}}.
    \end{equation}
    We will show that
    \begin{equation*}
        \int_{0}^{M}
        \frac{\intd u}{\sqrt{V_{\lambda}(M) - V_{\lambda}(u)}} \longrightarrow 0 \qquad \text{as $M \to +\infty$} 
    \end{equation*}
    uniformly in $\lambda \ge 0$.
    
    \bigbreak
    To do so, we take $u = M t$ and obtain
    \begin{align*}
        \int_{0}^{M}
        \frac{\intd u}{\sqrt{V_{\lambda}(M) - V_{\lambda}(u)}}
        &= \int_0^1 \frac{M \intd t}{\sqrt{F(M) - F(Mt) + \frac{M^2 \lambda}{2}(1 - t^2)}}\\
        &\le \int_0^1 \frac{\intd t}{\sqrt{\frac{F(M)}{M^2} - \frac{t^2 F(M)}{M^2}}}= \frac{\pi M}{2 \sqrt{F(M)}}  \longrightarrow 0 \qquad \text{as $M \to +\infty$,}
    \end{align*}
    where we used that $F(Mt) \le t^2 F(M)$
    for $0 \le t \le 1$ and that
    $\frac{F(M)}{M^2} \to +\infty$ as $M \to +\infty$
    (see point \ref{prop_f_3} of Proposition \ref{prof_f}).
    In the same way, we show that
    \begin{equation*}
        \int_{-m}^{0}
        \frac{\intd u}{\sqrt{V_{\lambda}(-m) - V_{\lambda}(u)}}
   \longrightarrow 0 \qquad \text{as $m \to +\infty$}
    \end{equation*}
    uniformly in $\lambda \ge 0$, which ends the proof, recalling \eqref{expression_tau}.
\end{proof}

Using the previous lemmas,
we deduce the following crucial
``$L^\infty$ to $L^2$'' estimate.

\begin{lem}[$L^\infty$ to $L^2$ estimate]
    \label{Linfty_to_L2}
    Let $\cG$ be a compact metric graph
    with a self-adjoint Laplacian $(D(A), A)$
    and $f \in \mathcal{C}(\R, \R)$ satisfy (f1) and (f2).
    Then, there exists $R > 0$ and $\gamma > 0$
    such that, for all $\lambda \ge 0$,
    all solutions of the problem $Au = \lambda u + f(u)$
    on $\cG$ with $\| u \|_{L^\infty(\cG)} \ge R$ satisfy
    \begin{equation*}
        \gamma \|u\|_{L^\infty(\cG)}
        \le \|u\|_{L^2(\cG)}.
    \end{equation*}
\end{lem}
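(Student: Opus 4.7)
The strategy is to localize to a single edge and exploit the three ODE lemmas in succession. Since $\cG$ has finitely many edges and every solution $u$ of $Au = \lambda u + f(u)$ is $H^2$ (hence continuous) edge by edge, there is an edge $e^*$ on which the essential supremum $M := \|u\|_{L^\infty(\cG)}$ is attained, i.e.\ $\max_{\overline{e^*}}|u| = M$. On $e^*$ the function $u$ solves the autonomous ODE \eqref{graph_ODE}, so the ODE energy $H_u = \tfrac{1}{2}(u')^2 + V_\lambda(u)$ is constant along $e^*$. Consequently the phase-plane trajectory of $u|_{e^*}$ lies on the closed level curve corresponding to some periodic orbit $u_{\lambda,\tilde M}$ with maximum $\tilde M > 0$ and minimum $-m_{\lambda,\tilde M} < 0$, and the containment of the range of $u|_{e^*}$ in $[-m_{\lambda,\tilde M}, \tilde M]$ gives $\max(\tilde M, m_{\lambda,\tilde M}) \ge M$.

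The next step is to force both $\tilde M$ and $m_{\lambda,\tilde M}$ to be of order $M$ and to guarantee that the edge $e^*$ contains at least one full ODE period. Let $\ell_{\min} > 0$ be the minimum of the edge lengths of $\cG$. If $M \ge \kappa_0 s_0$, then $\max(\tilde M, m_{\lambda,\tilde M}) \ge \kappa_0 s_0$, which triggers Lemma~\ref{comparison_m_M} and yields
\[
  \min(\tilde M, m_{\lambda,\tilde M})
  \ge \frac{1}{\kappa_0}\max(\tilde M, m_{\lambda,\tilde M})
  \ge \frac{M}{\kappa_0}.
\]
Applying Lemma~\ref{large_norm_to_small_tau} with $\eps = \ell_{\min}/2$ produces a constant $s_1 > 0$ (uniform in $\lambda \ge 0$) such that $\tau_{\lambda,\tilde M} \le \ell_{\min}/2$ whenever $\min(\tilde M, m_{\lambda,\tilde M}) \ge s_1$, and this is ensured as soon as $M \ge \kappa_0 s_1$. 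Setting $R := \kappa_0 \max(s_0, s_1)$, for $M \ge R$ I obtain $\tau_{\lambda,\tilde M} \le \ell_{\min}/2 \le \ell_{e^*}/2$.

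Finally, letting $k := \lfloor \ell_{e^*}/\tau_{\lambda,\tilde M}\rfloor \ge 1$, the edge $e^*$ contains $k$ disjoint subintervals of length $\tau_{\lambda,\tilde M}$ on each of which $u$ traces exactly one full period of $u_{\lambda,\tilde M}$ (up to time-translation/reflection, which leaves the $L^2$ integral over one period unchanged). Together with $k\,\tau_{\lambda,\tilde M} \ge \ell_{e^*}/2 \ge \ell_{\min}/2$, Lemma~\ref{bound_L2_norm_ODE} then yields
\[
  \|u\|_{L^2(\cG)}^2
  \ge \int_{e^*} |u|^2 \intd x
  \ge k\cdot \frac{\tau_{\lambda,\tilde M}}{8}\, \min(\tilde M, m_{\lambda,\tilde M})^2
  \ge \frac{\ell_{\min}}{16\kappa_0^2}\, M^2,
\]
giving the claim with $\gamma := \sqrt{\ell_{\min}}/(4\kappa_0)$.

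The one delicate point is the chaining of the three lemmas so that, uniformly in $\lambda \ge 0$, a largeness assumption on the \emph{maximum} $M$ of $|u|$ transfers first to a largeness of both amplitudes $\tilde M$ and $m_{\lambda,\tilde M}$ of the ambient periodic orbit (via the approximate-symmetry hypothesis (f2) through Lemma~\ref{comparison_m_M}), and then to the smallness of the period required to fit a full oscillation inside every edge. Once this coordination is in place, the $L^2$ lower bound on $e^*$ is a direct application of Lemma~\ref{bound_L2_norm_ODE} and no further estimate is needed.
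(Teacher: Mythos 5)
Your proposal is correct and follows the same route as the paper's own proof: localize to the edge where the $L^\infty$-norm is attained, invoke Lemma~\ref{comparison_m_M} to link the two amplitudes of the ambient periodic orbit, use Lemma~\ref{large_norm_to_small_tau} with $\eps=\ell_{\min}/2$ to guarantee a full period fits inside the edge, and then apply Lemma~\ref{bound_L2_norm_ODE} over the resulting sub-interval; even the final constant $\gamma=\sqrt{\ell_{\min}}/(4\kappa_0)$ matches. You are in fact a bit more careful than the paper on one point, namely distinguishing $M=\|u\|_{L^\infty(\cG)}$ from the amplitude $\tilde M$ of the underlying closed orbit and noting only the inequality $\max(\tilde M,m_{\lambda,\tilde M})\ge M$ is immediate a priori, which removes a slight circularity in the paper's wording where equality is asserted before the full-period property is established.
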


\begin{proof}
    Let $\ell$ be the length of the smallest edge of the graph.
    Let $s_1$ be given by Lemma~\ref{large_norm_to_small_tau}
    with $\eps = \ell/2$.    
    We take $R := \kappa_0 \max(s_0, s_1)$.
    
    Let $\lambda \ge 0$.
    Given a solution $u$, there exists one edge $e$
    of the graph such that
    $\| u \|_{L^\infty(\cG)} = \| u \|_{L^\infty(e)} \ge R$.
    Since $R \ge \kappa_0 s_0$,
    Lemma~\ref{comparison_m_M}, implies that
    \begin{equation}
        \label{eq:comp_m_M}
        \min(m_{\lambda, M}, M)
        \ge \frac{1}{\kappa_0} \max(m_{\lambda, M}, M)
        \ge \frac{1}{\kappa_0} R
    \end{equation}
    for all $M \ge R$.
    Since $\frac{1}{\kappa_0} R \ge s_1$,
    Lemma~\ref{large_norm_to_small_tau}
    implies that $\tau_{\lambda, M} \le \ell/2$
    for all $M \ge R$.
    Hence, the solution $u_{\mid e}$ completes at least two full periods on $e$.
    Let $I \subseteq e$ be a union of $\lfloor |e|/\tau_{\lambda, M} \rfloor$
    consecutive full periods of $u_{\mid e}$, so that
    $|I| = \lfloor |e|/\tau_{\lambda, M} \rfloor \, \tau_{\lambda, M}
    \ge |e| - \tau_{\lambda, M} \ge |e|/2 \ge \ell/2$.
    Applying Lemma~\ref{bound_L2_norm_ODE} to each period, we obtain
    \begin{equation*}
        \frac{\ell}{16} \min(m_{\lambda, M}, M)^2
        \le \frac{|I|}{8} \min(m_{\lambda, M}, M)^2
        \le \int_I |u(x)|^2 \intd x
        \le \int_{\cG} |u(x)|^2 \intd x.
    \end{equation*}
    Using \eqref{eq:comp_m_M}, we obtain
    \begin{equation*}
        \frac{\ell}{16 \kappa_0^2} \max(m_{\lambda, M}, M)^2
        \le \|u\|_{L^2(\cG)}^2.
    \end{equation*}
    Remarking that
    \begin{equation*}
        \max(m_{\lambda, M}, M)
        = \|u\|_{L^\infty(e)}
        = \|u\|_{L^\infty(\cG)}
    \end{equation*}
    and taking $\gamma := \frac{\sqrt{\ell}}{4 \kappa_0}$
    ends the proof.
\end{proof}

\subsection{Sign of solutions in the presence
    of a constant-sign eigenfunction}
  \label{sec:sign-solut-pres}

The following statement is based on a classical
argument, but we include the proof for completeness.

\begin{prop}
    \label{no_pos_sol}
    Let $\cG$ be a compact metric graph
    with a self-adjoint Laplacian $(D(A), A)$.
    Let $f \in \mathcal{C}(\R, \R)$ be a function
    which is positive on $\intervaloo{0, +\infty}$.
    Assume that there exists an eigenfunction $\xi_i$ of $A$,
    associated to the eigenvalue $\lambda_i$,
    which is positive almost everywhere in $\cG$.
    Then, the NLS equation $-u'' = \lambda u + f(u)$
    coupled with vertex conditions of $D(A)$
    has no nontrivial nonnegative solutions
    for $\lambda \ge \lambda_i$.
\end{prop}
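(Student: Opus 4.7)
The plan is to test the equation $-u'' = \lambda u + f(u)$ against the positive eigenfunction $\xi_i$ and exploit the sign structure on both sides. The argument is the standard ``Picone/testing against a positive eigenfunction'' technique, and the only content is checking the hypotheses carefully.

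First, I would argue by contradiction: assume $u \in E \setminus \{0\}$ is a nonnegative weak solution of $Au = \lambda u + f(u)$ for some $\lambda \ge \lambda_i$. Since $\xi_i \in D(A) \subset E$, we may use it as a test function in the weak formulation, obtaining
\begin{equation*}
    \langle A^{1/2} u, A^{1/2} \xi_i \rangle_H
    = \lambda \langle u, \xi_i \rangle_H
    + \int_{\cG} f(u(x)) \xi_i(x) \intd x.
\end{equation*}
Since $A$ is self-adjoint and $A \xi_i = \lambda_i \xi_i$, the left-hand side equals $\langle u, A \xi_i \rangle_H = \lambda_i \langle u, \xi_i \rangle_H$. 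Rearranging gives the key identity
\begin{equation*}
    (\lambda_i - \lambda) \langle u, \xi_i \rangle_H
    = \int_{\cG} f(u(x)) \xi_i(x) \intd x.
\end{equation*}

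Next I would analyze the signs. Because $\xi_i > 0$ almost everywhere on $\cG$ and $u \ge 0$ is not identically zero, we have $\langle u, \xi_i \rangle_H > 0$; combined with $\lambda \ge \lambda_i$, the left-hand side above is nonpositive. For the right-hand side, continuity of $f$ and the assumption that $f > 0$ on $(0, +\infty)$ yields $f(0) \ge 0$ and $f(u(x)) > 0$ on $\{u > 0\}$, which is a set of positive measure; together with $\xi_i > 0$ a.e., the right-hand side is strictly positive. This contradicts the identity and proves the proposition.

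The main ``obstacle'' is really just bookkeeping: making sure the weak formulation is available for the test function $\xi_i$ (which is immediate since $\xi_i \in D(A)$, so the integration-by-parts/self-adjointness step makes sense without boundary/vertex terms regardless of the specific self-adjoint realization $(D(A),A)$), and noting that the conclusion covers $\lambda = \lambda_i$ as well as $\lambda > \lambda_i$ because the right-hand side is \emph{strictly} positive. No use of (f1)--(f2) or of the Nehari--Pankov machinery is needed here; only positivity of $f$ on $(0,\infty)$ and the existence of an a.e.\ positive eigenfunction enter.
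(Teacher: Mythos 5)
Your proof is correct and follows essentially the same route as the paper's: test the weak equation against the a.e.\ positive eigenfunction $\xi_i$, use self-adjointness of $A$ to rewrite $\langle A^{1/2}u,A^{1/2}\xi_i\rangle_H=\lambda_i\langle u,\xi_i\rangle_H$, and obtain a sign contradiction from $(\lambda_i-\lambda)\langle u,\xi_i\rangle_H=\int_{\cG} f(u)\xi_i$. You are in fact slightly more explicit than the paper about why the right-hand side is strictly positive (positive measure of $\{u>0\}$ plus $f\ge 0$ and $f>0$ on $(0,\infty)$), which is a minor improvement rather than a different approach.
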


\begin{proof}
    Assume that there exist $\lambda \ge \lambda_i$
    and a nontrivial nonnegative solution $u$
    of the problem
    \begin{equation*}
        Au = \lambda u + f(u).
    \end{equation*}
    Taking scalar products with $\xi_i$,
    we obtain
    \begin{equation*}
        (Au, \xi_i)_{L^2}
        = \lambda \int_{\cG} u \xi_i \intd x
        + \int_{\cG} f(u) \xi_i \intd x.
    \end{equation*}
    Since $\xi_i$ is an eigenfunction of $A$, we have that
    \begin{equation*}
        (Au, \xi_i)_{L^2}
        = \lambda_i \int_{\cG} u \xi_i \intd x.
    \end{equation*}
    We obtain
    \begin{equation*}
        (\lambda_i - \lambda) \int_{\cG} u\xi_i \intd x
        = \int_{\cG} f(u) \xi_i \intd x,
    \end{equation*}
    a contradiction since the left-hand side is nonpositive
    while the right-hand side is positive
    since $u \ge 0$ is not identically equal to $0$.
\end{proof}

As already pointed out in the introduction,
the first eigenfunction is positive in $\cG$
in the case of continuity-Kirchhoff conditions.
For a further study of conditions ensuring
positivity of the first eigenfunction,
one may refer to \cite[Section 4.5]{Kur}.

\subsection{Proof of Theorem~\ref{graph_mult}}
We can finally turn to the proof of our main result
about NLS equations on metric graphs.

\begin{proof}[Proof of Theorem~\ref{graph_mult}]
    Combining Propositions
    \ref{large_spectral_gaps} and \ref{large_L_infty},
    we deduce the existence in bigger and bigger
    spectral gaps of solutions
    whose $L^\infty$-norms converge to $+\infty$.
    Then, using Lemma~\ref{Linfty_to_L2},
    we deduce that bigger and bigger
    spectral gaps contain solutions of $L^2$ norms
    converging to $+\infty$. Since the set of norms
    obtained in each spectral gap is an interval according to
    Theorem~\ref{M-n-char}, we deduce the existence part of Theorem~\ref{graph_mult}.
    
    Finally, the claim about the sign of solutions
    when $A$ admits an eigenvalue positive almost everywhere
    follows from Proposition~\ref{no_pos_sol}
    when $\lambda$ is large.
\end{proof}

\section{The nonlinear biharmonic equation on the $2$-torus}
\label{sec:nonl-biharm-equat}

In this section we complete the proof of Theorem~\ref{torus_mult}.
Namely, we consider the special case
where $\Omega= S^1 \times S^1$ is the flat torus,
$A= \Delta^2$ and the functional $I$
is given by (\ref{eq:special-I-metric-measure-space})
for some $p>2$. It is then easy to see
that assumptions (I1)--(I5) are satisfied,
as the space $E=D(A^{\frac{1}{2}})=D(\Delta)$
coincides with the Sobolev space $H^2(\Omega)$
which is compactly embedded
in $L^\infty(\Omega) \subset L^p(\Omega)$. 

Moreover, in this case, the spectrum of $A$ is given by
$$
\{(k_1^2+k_2^2)^2 \mid (k_1, k_2) \in \Z^2\}.
$$
Indeed, defining
\begin{equation*}
    \Gamma_k := \begin{cases}
        \frac{1}{\sqrt{2\pi}} & \text{if } k = 0, \\
        \frac{1}{\sqrt{\pi}}  & \text{if } k \neq 0,
    \end{cases}
\end{equation*}
then an $L^2(\Omega)$-orthonormal basis
of eigenfunctions of $A$ is given by
$(\psi_{(k_1, k_2)})_{(k_1, k_2) \in \Z^2}$, where
\begin{align*}
    \psi_{(k_1, k_2)}(x_1, x_2)
    &:= \Gamma_{k_1} \Gamma_{k_2} \begin{cases}
        \sin(k_1 x_1) \sin(k_2 x_2)
        &\text{if $k_1 \ge 1$ and $k_2 \ge 1$},\\
        \cos(k_1 x_1) \sin(k_2 x_2)
        &\text{if $k_1 \le 0$ and $k_2 \ge 1$},\\
        \sin(k_1 x_1) \cos(k_2 x_2)
        &\text{if $k_1 \ge 1$ and $k_2 \le 0$},\\
        \cos(k_1 x_1) \cos(k_2 x_2)
        &\text{if $k_1 \le 0$ and $k_2 \le 0$}.
    \end{cases}
\end{align*}
Here, $x_1$ and $x_2$ denote angular variables in $S^1$.
Thus, the dimension of the eigenspace associated
with an eigenvalue $\mu$ of $A$
is given by $r_2(\sqrt{\mu})$, where,
here and in the following,
$$
r_2:
\Z^{\ge 0} \to \Z^{\ge 0}:
j \mapsto r_2(j)
:= \# \bigl\{
    (k_1, k_2) \in \Z^2 \mid j = k_1^2 + k_2^2
\bigr\}
$$
denotes the sums of two squares function. A result from analytic
number theory (see \cite[Chapter 16.9]{hardy-wright}) implies that
$$
r_2(j) \le 4 d(j) \qquad \text{for every $j \in \N$,}
$$
where $d(j)$ is the number of divisors of $j$.
From this inequality, one can deduce (see \cite[Theorem 315]{hardy-wright})
that for every $\eps>0$ there exists $C_\eps>0$ with 
\begin{equation}
  \label{eq:key-asymptotic-est-sums-of-squares}
r_2(j) \le C_\eps j^\eps \quad \text{for every $j \in \N$.}
\end{equation}
Since $\| \psi_{(k_1, k_2)} \|_{L^\infty(\Omega)} \le \frac{1}{\pi}$
for all $(k_1, k_2) \in \Z^2$, the first part of
assumption \ref{I6} holds with $C = \frac{1}{\pi}$. Moreover, as
\begin{equation*}
    \sum_{k \in \Z^{\ge 2}}
    \frac{1}{\lambda_k}
    = \sum_{j \in \Z^{> 0}}
    \frac{r_2(j)}{j^2}
    \le C_{1/2} \sum_{j \in \Z^{> 0}}
    \frac{1}{j^{3/2}}
    < +\infty,
\end{equation*}
we deduce that the hypothesis \ref{I6} is satisfied.

\medbreak
In the following, for fixed $\ell \in \N$,
we choose $n_\ell \in \N$ such that 
$\ell^4 = \lambda_{n_\ell+1}$
is the $(n_\ell+1)$-th eigenvalue of $A = \Delta^2$
(counted with multiplicity).
For every eigenvalue $\mu= (k_1^2+k_2^2)^2< \ell^4$,
we then have $k_1^2 + k_2^2 < \ell^2$.
Therefore, $k_1^2 + k_2^2 \le \ell^2-1$,
so $\mu \le (\ell^2-1)^2$. Hence it follows that
$$
\lambda_{n_\ell} \le (\ell^2-1)^2
$$
so the value
$$
\lambda = \lambda(\ell) := (\ell^2-1)^2
$$
is admissible to get lower bounds in (\ref{eq:L2-abstract-est-sup-greater-4}) and (\ref{eq:L2-abstract-est-sup-smaller-4}). We note that 
\begin{equation}
  \label{eq:ell-diff-est}
\lambda_{n_\ell+1}-\lambda(\ell) = \ell^4-(\ell^2-1)^2 = 2\ell^2+1.
\end{equation}
Moreover, fixing $\eps \in (0,\frac{1}{2})$
in (\ref{eq:key-asymptotic-est-sums-of-squares}),
we can estimate the quantity $d(n_\ell,\lambda(\ell))$
in (\ref{eq:def-d-lambda}) by
\begin{align*}
d(n_\ell,\lambda(\ell))
&= \sum_{\stackrel{(k_1, k_2) \in \Z^2}{(k_1^2+k_2^2)^2
\ge \ell^4}}\frac{1}{(k_1^2+k_2^2)^2-\lambda(\ell)}
= \sum_{j = \ell^2}^\infty \frac{r_2(j)}{j^2-\lambda(\ell)}\\
&\le C_\eps \sum_{j = \ell^2}^\infty
\frac{j^\eps}{j^2-\lambda(\ell)}
= C_\eps \Bigl(\frac{\ell^{2\eps}}{\ell^4-\lambda(\ell)}
+ \sum_{j = \ell^2+1}^\infty
\frac{j^\eps}{j^2-\lambda(\ell)}\Bigr),
\end{align*}
where
$$
\frac{\ell^{2\eps}}{\ell^4-\lambda(\ell)} = \frac{\ell^{2\eps}}{2\ell^2+1} \le (2\ell^2 + 1)^{\eps-1} = (\lambda_{n_\ell+1}-\lambda(\ell))^{\eps-1}
$$
and 
\begin{align*}
\sum_{j = \ell^2+1}^\infty \frac{j^\eps}{j^2-\lambda(\ell)}
&\le \int_{\ell^2}^{+\infty} \frac{t^\eps}{t^2-\lambda(\ell)}
\intd t
= \frac{1}{2}\int_{\ell^4}^{+\infty}
\frac{t^{\frac{\eps-1}{2}}}{t-\lambda(\ell)}\intd t
\le  \frac{\ell^{2(2\eps-1)}}{2}\int_{\ell^4}^{+\infty}
\frac{t^{-\frac{\eps}{2}}}{t-\lambda(\ell)}\intd t\\
&\le  \frac{\ell^{2(2\eps-1)}}{2}
\int_{\ell^4}^{+\infty} (t-\lambda(\ell))^{-\frac{\eps}{2}-1}\intd t
\le \frac{\ell^{2(2\eps-1)}}{\eps}
(\ell^4-\lambda(\ell))^{-\frac{\eps}{2}}
\le \frac{\ell^{2(2\eps-1)}}{\eps}
(2\ell^2+1)^{-\frac{\eps}{2}} \\
&\le \frac{\ell^{-2}}{\eps} (2\ell^2+1)^{\frac{3\eps}{2}}
\le \frac{3}{\eps}(2\ell^2+1)^{\frac{3\eps}{2}-1}
= \frac{3}{\eps}
(\lambda_{n_\ell+1}-\lambda(\ell))^{\frac{3}{2}\eps-1}.
\end{align*}
Combining these estimates, we find that for every $\tau \in (0,1)$ there exists $C_\tau>0$ independent of $\ell$, with
\begin{equation}
  \label{eq:d-n-ell-est}
d(n_\ell,\lambda(\ell))  \le C_\tau (\lambda_{n_\ell+1}-\lambda(\ell))^{-\tau}.
\end{equation}
Inserting this estimate in (\ref{eq:L2-abstract-est-sup-greater-4}) and using (\ref{eq:ell-diff-est}), we then deduce from Proposition~\ref{eq-abstract-L2-est} in the case $p>4$ that 
\begin{align*}
\sup \Bigl\{\|u\|_{L^2} \bigm|
\text{$u \in Q_\lambda$ for some
$\lambda \in (\lambda_{n_\ell},
\lambda_{n_{\ell}+1})$}\Bigr \}
&\ge \Bigl[C^2C_p (\lambda_{n_\ell+1}-\lambda(\ell))^{\frac{p(p-4)}{(p-2)^2}}
d(n_\ell,\lambda(\ell)) \Bigr]^{-\frac{p-2}{4}}\\
&\ge \widetilde C_\tau (\lambda_{n_\ell+1}-\lambda(\ell))^{\bigl(\tau - \frac{p(p-4)}{(p-2)^2}\bigr) \frac{p-2}{4}}\\
&= \widetilde C_\tau (2\ell^2+1)^{\bigl(
\tau - \frac{p(p-4)}{(p-2)^2}\bigr) \frac{p-2}{4}}
\end{align*}
with a constant $\widetilde C_\tau$ which depends on $\tau \in (0,1)$ but not on $\ell$. Since $\frac{p(p-4)}{(p-2)^2}< 1$, we may fix $\tau \in (0,1)$ with
$\tau-\frac{p(p-4)}{(p-2)^2}>0$.
This implies that, for every $\mu>0$,
there exists $\ell_\mu \in \N$ with
\begin{equation}
  \label{eq:key-final-est}
\sup \Bigl\{\|u\|_{L^2} \bigm| \text{$u \in Q_\lambda$ for some $\lambda \in (\lambda_{n_\ell},\lambda_{n_{\ell}+1})$}\Bigr \} \ge \mu +1 \qquad \text{for $\ell \ge \ell_\mu$.}
\end{equation}
In the case $2 <p \le 4$, we come to the same conclusion by inserting the estimate (\ref{eq:d-n-ell-est}) in (\ref{eq:L2-abstract-est-sup-smaller-4}) and using (\ref{eq:ell-diff-est}) again to see that 
\begin{align*}
\sup \Bigl\{\|u\|_{L^2} \bigm|
\text{$u \in Q_\lambda$ for some
$\lambda \in (\lambda_{n_\ell},
\lambda_{n_{\ell}+1})$}\Bigr \}
&\ge \Bigl[C^2C_p d(n_\ell,\lambda(\ell)) \Bigr]^{-\frac{1}{p-2}}\\
&\ge \Bigl[C^2C_p C_\tau]^{-\frac{1}{p-2}}  (\lambda_{n_\ell+1}-\lambda(\ell))^{\frac{\tau}{p-2}}\\
&= \Bigl[C^2C_p C_\tau]^{-\frac{1}{p-2}}(2\ell^2+1)^{\frac{\tau}{p-2}}.
\end{align*}
Combining the estimate (\ref{eq:key-final-est}) with Theorem~\ref{M-n-char} yields the existence of infinitely many normalized solutions of the equation (\ref{torus-eq}) with mass $\mu$ corresponding to an infinite set of parameters $\lambda$ located in the disjoint intervals $(\lambda_{n_\ell},\lambda_{n_{\ell}+1})$, $\ell \ge \ell_\mu$. Moreover, all these solutions are sign changing. Indeed, this follows as in Section~\ref{sec:sign-solut-pres} (or by simply integrating (\ref{torus-eq}) over $\Omega = S^1 \times S^1$), since the eigenspace of $A=\Delta^2$ associated with $\lambda_1=0$ is spanned by constant functions (which therefore do not change sign).
   The proof of Theorem~\ref{torus_mult} is thus finished.

\bigbreak

\section*{Acknowledgements}
The authors thank Louis Jeanjean for valuable comments. D.G. is a Francqui Fellow of the Belgian American Educational Foundation at Brown University
and was a Research Fellow of the F.R.S.-FNRS when this research was initiated.
He would like to thank the Francqui Foundation, the Belgian American Educational Foundation,
the University of Mons, the F.R.S.-FNRS and the Goethe-Universität Frankfurt for supporting this research.

\section*{Statements and declarations}
The authors have no relevant financial
or non-financial interests to disclose.

Data sharing is not applicable to this article
as no datasets were generated
or analyzed during the current study.

\end{document}